\newcommand{\R}{{\mathbb R}}
\newtheorem{prop}{Proposition}[section]
\newtheorem{lem}[prop]{Lemma}
\newtheorem{defi}[prop]{Definition}
\newtheorem{coro}[prop]{Corollary}
\newtheorem{theo}{Theorem}[section]
\newtheorem{exam}[prop]{Example}
\newtheorem{rema}[prop]{Remark}
\newcommand{\rmnum}[1]{\romannumeral #1}
\newcommand{\bx}{{\mathbf{x}}}
\newcommand{\by}{{\mathbf{y}}}
\newcommand{\bz}{{\mathbf{z}}}
\newif\ifdraft
\numberwithin{equation}{section}
\begin{document}
\title[Topology automaton and conformal dimension of post-critical-finite self-similar sets]{Topology automaton and conformal dimension of post-critically finite self-similar sets}
\author{Hui Rao} \address{Department of Mathematics and Statistics, Central China Normal University, Wuhan, 430079, China
} \email{hrao@mail.ccnu.edu.cn
 }

\author{Zhi-Ying Wen} \address{Department of Mathematical Sciences, Tsinghua University,  Beijing, 100084, China
} \email{wenzy@tsinghua.edu.cn
 }

\author{Qihan Yuan$^*$} \address{Department of Mathematics and Statistics, Central China Normal University, Wuhan, 430079, China
} \email{yuanqihan@mails.ccnu.edu.cn
 }

\author{Yuan Zhang} \address{Department of Mathematics and Statistics, Central China Normal University, Wuhan, 430079, China
} \email{yzhang@mail.ccnu.edu.cn
 }

\date{\today}


\thanks{* The correspondence author.}

\begin{abstract}
In this paper, we use a class of finite state automata,
called topology automaton,
to study the metric classification of a special class of post-critically finite self-similar sets.
As an application, we prove that the conformal dimension of post-critically finite self-similar dendrites and fractal gasket with connected component is 1.
\end{abstract}
\maketitle

\section{Introduction}
Quasisymmetric mapping is introduced by Beurling and Ahlfors \cite{Beurling_1956}.
Let $(X,d_X)$ and $(Y,d_Y)$ be two metric spaces.
A homeomorphism $f:(X,d_X)\rightarrow(Y,d_Y)$ is said to be \emph{quasisymmetric}
if there exists an increasing homeomorphism $\eta$ of $[0,\infty)$ to itself such that
\begin{equation}\label{def_quasisymmetric}
d_X(x,y)\leq td_X(x,z) \quad \Rightarrow \quad d_Y(f(x),f(y))\leq\eta(t)d_Y(f(x),f(z))
\end{equation}
for all $x,y,z\in X$; in this case we say that $(X,d_X)$ and $(Y,d_Y)$ are \emph{quasisymmetrically equivalent}.
The conformal dimension introduced by Pansu \cite{Pansu_1989} is one of the most important quasisymmetry invariants.

\begin{defi}\textbf{conformal dimension}$:$
Let $(X,d_X)$ be a metric space. The conformal dimension of $(X,d_X)$, denoted by  $\dim_C(X,d_X)$,
is the infimum of the Hausdorff dimensions of all metric spaces quasisymmetrically equivalent to $(X,d_X)$, i.e.
$$\dim_C(X,d_X)=\inf\left\{\dim_H(Y,d_Y); (Y,d_Y) \ \text{is quasisymmetrically equivalent to} \ (X,d_X) \right\}.$$
\end{defi}

Recently, many works have been devoted to the study of the conformal dimension of self-similar sets \cite{TW_2006,Bishop_2001,Kigami_2014,Dang_2021,Kovalev_2006,Hakobyan_2010}.
J. T. Tyson and J. M. Wu \cite{TW_2006} proved that the conformal dimension of the Sierpinski gasket is 1.
C. J. Bishop and J. T. Tyson \cite{Bishop_2001} proved that the conformal dimension of the antenna set is 1.
J. Kigami \cite{Kigami_2014} showed that the conformal dimension of the Sierpinski carpet is not greater than $\frac{\log(\frac{9+\sqrt{41}}{2})}{\log 3}\approx 1.858183$.
Y. G. Dang and S. Y. Wen \cite{Dang_2021} proved that the conformal dimension of a class of planar self-similar dendrites is one.
L. V. Kovalev \cite{Kovalev_2006} proved that metric spaces of Hausdorff dimension strictly less than one have conformal dimension zero.
H. Hakobyan \cite{Hakobyan_2010} showed that there are sets of zero length and conformal dimension 1. (A self-similar set is said to be an \emph{antenna set}, if it is the attractor of the IFS $\{f_1(z)=\frac{1}{2}z, f_2(z)=\frac{1}{2}z+\frac{1}{2}, f_3^\alpha(z)=\alpha iz+\frac{1}{2}, f_4^\alpha(z)=-\alpha iz+\frac{1}{2}+\alpha i\}$, where $0<\alpha<\frac{1}{2}$.)

An \emph{iterated function system (IFS)} is a family of contractions $\{f_i\}_{i=1}^N$ on a complete metric space $(X,d)$,
and the \emph{attractor} of it is the unique nonempty compact set $K$ satisfying $K=\bigcup_{i=1}^Nf_i(K)$.
The attractor $K$ is called a \emph{self-similar set} if the contractions are all similitudes (see \cite{Hutchinson_1981}).

We say that $f:(X,d_X)\rightarrow(Y,d_Y)$ is a bi-Lipschitz map, if there exists constants
$0<A,B<+\infty$ such that
$$Ad_X(x,y)\leq d_Y(f(x),f(y))\leq Bd_X(x,y), \quad \text{for all} \ x,y\in X.$$
For simplicity, we always assume that all mappings in an IFS are bi-Lipschitz in this paper.

Let $F=\{f_i\}_{i=1}^N$ be an IFS, and let $K$ be the attractor.
Denote $\Sigma=\{1,2,\cdots, N\}$ and $\Sigma^*=\bigcup_{k=1}^\infty \Sigma^k$ the set of all finite words.
Refer to J. Kigami \cite{Kigami_2001}, the \emph{critical set} $C_F$ and the \emph{post-critical set} $P_F$ of $F$ are defined as
$$C_F=\bigcup\limits_{i\neq j\in\Sigma}f_i(K)\cap f_j(K) \quad \text{and} \quad P_F=\bigcup\limits_{\boldsymbol{\omega}\in\Sigma^\ast}f_{\boldsymbol{\omega}}^{-1}(C_F)\cap K,$$
where $f_{\omega_1\cdots \omega_k}=f_{\omega_1}\circ\cdots\circ f_{\omega_k}$.
Let $\pi_K: \Sigma^\infty\rightarrow K$ be the \emph{coding map} defined by
$$\{\pi_K(i_1 i_2\cdots)\}=\bigcap\limits_{k\geq 1}f_{i_1\cdots i_k}(K).$$
It is following \cite{Kigami_2001} that $\pi_K$ is continuous.
For ease of notation, we use $C, P$ and $\pi$ instead of $C_F, P_F$ and $\pi_K$ as long as it can not cause any confusion.
It is following \cite{Kigami_2001} that $\pi^{-1}(P)=\bigcup\limits_{n\geq1}\sigma^n(\pi^{-1}(C))$,
where $\sigma: \Sigma^\infty\rightarrow \Sigma^\infty$ is defined by $\sigma(i_1i_2\cdots)=i_2i_3\cdots$.
An IFS is said to be a \emph{post-critically finite (p.c.f.)} IFS if $\pi^{-1}(P)$ is a finite set.

In this paper, we focus on a special class of p.c.f. fractals.
An IFS $\{f_i\}_{i=1}^N$ is said to satisfy the \emph{single intersection condition (SIC)} if for any $i,j\in\{1,\ldots,N\}$ with $i\neq j$, $\{f_i(K)\cap f_j(K)\}$ contains at most one point.
Let $F=\{f_i\}_{i=1}^N$ be an IFS satisfying the SIC.
We say that $F$ satisfies the \emph{angle separation condition (ASC)}, if there exists constant $c>0$, such that for 1-order cylinders $f_i(K), f_j(K)$, $i\neq j$ and $f_i(K)\cap f_j(K)=\{z\}$, it holds that $d(x,y)\geq c\max\{d(x,z),d(y,z)\}$ for any $x\in f_i(K)$, $y\in f_j(K)$.

\begin{rema}\emph{ The angle separation condition was first introduced by Zhu and Yang \cite{Zhu_2018} in the Euclidean space.}
\end{rema}

Rao and Zhu \cite{Rao_2016} using the neighborhood automaton have proved that two fractal squares which are not totally disconnected are Lipschitz equivalent.
Huang, Wen, Yang and Zhu use a class of finite state automata to study the classification of self-similar sets in \cite{Zhu_2021}.

In the present paper, we use a class of finite state automata,
called \emph{topology automaton},
to determine whether two p.c.f. self-similar sets that satisfy the SIC and ASC are H{\"o}lder, Lipschitz or quasisymmetrically equivalent.
Topological automaton promote the triangle automaton in Huang et al. \cite{Zhu_2021}, then we can use it to study a larger class of self-similar sets.
As an application, we prove that the conformal dimension of p.c.f self-similar dendrites and fractal gasket with connected component is one.

\subsection{Topology automaton}
First, let us recall the finite state automaton.

\begin{defi}(see \cite{Hopcroft_1979})
A \textbf{finite state automaton} is a $5$-tuple $(Q, A, \delta, q_0, Q')$, where $Q$ is a finite set of states,
$A$ is a finite input alphabet, $q_0$ in $Q$ is the initial state, $Q'\subset Q$ is the set of final states,
and $\delta$ is the transition function mapping $Q\times A$ to $Q$. That is, $\delta(q,a)$ is a state for each state $q$ and input symbol $a$.
\end{defi}

Huang et al. defined the topology automaton for fractal gasket in \cite{Zhu_2021}. Here we generalize the definition to p.c.f fractals.

\begin{defi}\label{Topologyautomaton} \textbf{Topology automaton} $:$
Let $F=\{f_i\}_{i=1}^N$ be a p.c.f. IFS satisfying the SIC, and let $K$ be the attractor. Let $P$ be the post-critical set of $F$.
For a pair of points $u,v\in P$ (here $u$ can equal to $v$), we associate with it a state and denote it by $S_{uv}$.
Denote $\mathcal{N}=\{S_{uv}\}_{u,v\in P}$.
We define a finite state automaton $M_F$ as following: $$M_F=(Q, \Sigma^2, \delta, Id, Exit),$$ where the state set is $Q=\mathcal{N}\cup\{Id, Exit\}$,
the input alphabet is $\Sigma^2=\{1,\ldots,N\}^2$, the initial state is Id, the final state is Exit, and the transition function $\delta$ is given by:

$(\textrm{\rmnum{1}})$ $\delta(Id,(i,j))= \begin{cases} Id,& \text{if} \ \ i=j; \\ S_{uv},& \text{if}  \ \ i\neq j, \ f_i(K)\cap f_j(K)\neq\emptyset  \ \text{and} \ (u, v)\in P^2 \ \text{is the unique pair such that} \\ \ &  f_i(v)=f_j(u); \\ Exit,& \text{if} \ \ i\neq j  \ \ \text{and} \ \  f_i(K)\cap f_j(K)=\emptyset. \end{cases}$

$(\textrm{\rmnum{2}})$ $\delta(S_{u_1v_1},(i,j))= \begin{cases} S_{u_2v_2},& \text{if} \ \ (u_2, v_2)\in P^2 \ \text{such that} \ f_i(v_2)=v_1 \ \text{and} \ f_j(u_2)=u_1 \ (\text{where} \ (u_2, v_2) \ \\ & \text{is the unique pair by the SIC} \ ); \\  Exit,& \text{otherwise} \ (\text{that is} \ v_1\notin f_i(K) \ \ \text{or} \ \ u_1\notin f_j(K)). \end{cases}$

We call $M_F$ the \textbf{topology automaton} of $F$.
\end{defi}

\begin{rema}\emph{ If two IFS satisfying the SIC sharing a same topology automaton, then their attractors are homeomorphic. We can obtain better results under more assumptions.}
\end{rema}

\begin{theo}\label{main 1}
Let $F \! = \! \{f_i\}_{i=1}^N, G \! = \! \{g_i\}_{i=1}^N$ be the p.c.f. IFS on the complete metric space $(X,d_1),(Y,d_2)$ respectively.
Let $K,K'$ be the attractors of $F,G$ respectively. Suppose

$(\textrm{\rmnum{1}})$ both $F$ and $G$ satisfy the SIC and the ASC;

$(\textrm{\rmnum{2}})$ $F$ and $G$ have the same topology automaton.

\noindent Then $(K,d_1)$ and $(K',d_2)$ are H{\"o}lder equivalent.
\end{theo}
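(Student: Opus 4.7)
The plan is to use the coding maps to produce a homeomorphism $h:(K,d_1)\to(K',d_2)$ and then quantify its metric distortion through the ASC. Since $F$ and $G$ share the alphabet $\Sigma=\{1,\ldots,N\}$, they induce continuous surjections $\pi_K:\Sigma^\infty\to K$ and $\pi_{K'}:\Sigma^\infty\to K'$. The key observation is that for any pair $\omega,\omega'\in\Sigma^\infty$, the coding equality $\pi_K(\omega)=\pi_K(\omega')$ is entirely encoded by the topology automaton: it holds iff the trajectory of $M_F$ starting at $Id$ and reading the pair stream $((\omega_k,\omega'_k))_{k\geq 1}$ never enters $Exit$, the states $S_{u_kv_k}$ along the way recording the $P\times P$-address of the common intersection point of the level-$k$ cylinders. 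Because $F$ and $G$ share the same automaton by hypothesis, the two coding equivalences on $\Sigma^\infty$ coincide, and the identification $h\circ\pi_K=\pi_{K'}$ defines a continuous bijection, hence a homeomorphism by compactness.

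\textbf{Step 2: two-sided metric estimate.} For distinct $x,y\in K$, choose codings whose automaton walk has maximal first-exit time $m$, and let $n\leq m-1$ be the length of their longest common prefix. Both $m$ and $n$ are determined purely by the automaton and therefore coincide for the pair $h(x),h(y)$ in $K'$. By the SIC, a single intersection point $z$ is tracked by the automaton throughout the levels $[n+1,m-1]$; since $x$ and $y$ both lie in a level-$(m-1)$ cylinder containing $z$, the upper bound
\[
d_1(x,y)\;\leq\;2\,\bigl(\max_iB_i\bigr)^{m-1}\mathrm{diam}(K)
\]
is immediate. For the lower bound, the symbol that triggers Exit at level $m$ pushes $x$ uniformly away from $z$ inside its cylinder, yielding $d_1(x,z)\geq \delta\,(\min_iA_i)^{m-1}$ with a uniform $\delta>0$ coming from the finite separation between points of $P$ and those children of $K$ that do not contain them. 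Pulling back the level-$1$ ASC along the length-$n$ common prefix and applying it to the level-$(n+1)$ cylinders of $x$ and $y$ then gives
\[
d_1(x,y)\;\geq\;c\,\bigl(\tfrac{\min_iA_i}{\max_iB_i}\bigr)^{n}\max\{d_1(x,z),d_1(y,z)\}\;\geq\;c\,\delta\,\bigl(\tfrac{\min_iA_i}{\max_iB_i}\bigr)^{n}(\min_iA_i)^{m-1}.
\]
An exactly analogous two-sided estimate holds in $(K',d_2)$ with the constants of $G$.

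\textbf{Step 3: deducing the Hölder bound.} Eliminating the level parameters $m$ and $n$ between the bounds for $F$ and $G$ yields an inequality $d_2(h(x),h(y))\leq C\,d_1(x,y)^{\alpha}$ with $\alpha>0$ expressible as a ratio of logarithms of the extremal contraction constants of $F$ and $G$; interchanging the roles of $F$ and $G$ produces the reverse Hölder bound, so $h$ is bi-Hölder.

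\textbf{Main obstacle.} The crux is the lower estimate in Step 2 when the intersection chain is long, i.e.\ $m\gg n$: the level-$1$ ASC has to be lifted to a deep cylinder level, and the bi-Lipschitz distortion accumulated along the length-$n$ common prefix has to be absorbed without destroying the exponential decay rate. The identification (from the SIC) of a single persistent intersection point $z$ throughout the entire chain, together with the finiteness of $P$ and of the state set of the automaton, is what will make all constants uniform in $m$.
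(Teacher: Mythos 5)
Your proposal follows essentially the same strategy as the paper's proof: use the shared topology automaton to define a homeomorphism $h$ between the attractors, then bound distances in each metric from above and below in terms of the automaton's surviving (first-exit) time, using the SIC to locate the persistent intersection point $z$ and the ASC to get a lower bound, finally eliminating the level parameter to obtain a two-sided H\"older estimate. Two points to flag. First, a genuine omission in Step~2: your lower-bound estimate relies on the existence of the single intersection point $z$ of the first pair of distinct cylinders, but when the automaton exits immediately at level $n+1=m$ (that is, when $f_{\mathbf{I}x_{n+1}}(K)\cap f_{\mathbf{I}y_{n+1}}(K)=\emptyset$ already), there is no such $z$ and the ASC cannot be invoked; the paper handles this case separately via the positive gap $\xi_1$ between disjoint first-level cylinders, and you would need the analogous branch to make Step~2 complete. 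Second, a minor inefficiency rather than an error: you apply the ASC at the ambient scale, comparing $d_1(x,y)$ with $d_1(x,z)$ and thereby accumulating the pullback factor $\bigl(\tfrac{A_\ast}{B^\ast}\bigr)^n$ along the common prefix. The paper instead applies the ASC inside the common-prefix cylinder, working with the preimages $x'=f_\mathbf{I}^{-1}(x)$, $z'=f_\mathbf{I}^{-1}(z)$, and so gets the cleaner two-sided bound $c_1(A_\ast)^n\leq d_1(x,y)\leq c_2(B^\ast)^n$ in the surviving time $n$ alone. Your version still works: since the common-prefix length is strictly less than the surviving time and $A_\ast^2/B^\ast<1$, the exponential rates stay in $(0,1)$ and the elimination gives a positive H\"older exponent, but the arithmetic is noisier and the exponent obtained is worse.
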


\begin{rema}
\emph{M. Samuel, A. Tetenov and D. Vaulin \cite{Tetenov_2017} introduced a class of IFS, which is called polygonal tree system, and studied when their attractors are H{\"o}lder equivalent. Their result is a special case of the Theorem \ref{main 1}.}
\end{rema}

Let $F=\{f_i\}_{i=1}^N$ be a p.c.f. IFS, and let $K$ be the attractor.
Denote $$\partial\Sigma_K=\{\bx|_1;\bx\in\pi_K^{-1}(P_F)\}.$$

\begin{theo}\label{main 2}
Under the assumptions of Theorem \ref{main 1},
suppose $\{f_i\}_{i=1}^N$ and $\{g_i\}_{i=1}^N$ are contractive similitudes.
For self-similar IFS $F$ and $G$, we denote $r_i,r'_i$ as the contraction ratio of $f_i,g_i$ respectively.

$(\textrm{\rmnum{1}})$ If $r_i=r'_i$ for any $i\in\{1,\ldots,N\}$, then $(K,d_1)$ and $(K',d_2)$ are Lipschitz equivalent.

$(\textrm{\rmnum{2}})$ If there exist $s>0$ such that $r'_i=(r_i)^s$ for any
$i\in\partial\Sigma_K$, then $(K,d_1)$ and $(K',d_2)$ are quasisymmetrically equivalent.
\end{theo}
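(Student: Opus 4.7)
The plan is to use the same map $\phi: K \to K'$ as in Theorem~\ref{main 1}, namely $\phi(\pi_K(\bx)) = \pi_{K'}(\bx)$, which is well-defined and a homeomorphism because $F$ and $G$ share the topology automaton. The similitude hypothesis will allow me to sharpen the H\"older estimate of Theorem~\ref{main 1} into a precise symbolic formula for the metric. Concretely, for distinct $x = \pi_K(\bx)$ and $y = \pi_K(\by)$, feed the letter-pair sequence $((x_k, y_k))_{k \geq 1}$ into $M_F$, let $m$ be the first index with $x_m \neq y_m$, and let $N \geq m$ be the first step at which $M_F$ reaches Exit. Using SIC to handle the case $N = m$ (the two relevant $1$-cylinders inside $f_{\bx|_{m-1}}(K)$ are disjoint, so their gap gives a uniform lower bound), and ASC together with the observation that all intersection points $z_1, z_2, \ldots$ along the $S$-chain coincide with a single common point $z$ otherwise, I would prove that $d_1(x,y)$ is comparable, up to constants depending only on $F$, to a product of contraction ratios $r_i$ indexed along the traces $\bx|_{N-1}$ and $\by|_{N-1}$. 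The analogous formula holds for $d_2(\phi(x), \phi(y))$ in terms of $r'_i$.

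For part $(\textrm{\rmnum{1}})$, the hypothesis $r_i = r'_i$ for every $i$ forces the symbolic formulas for $d_1(x,y)$ and $d_2(\phi(x),\phi(y))$ to coincide, so $\phi$ is bi-Lipschitz.

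For part $(\textrm{\rmnum{2}})$, the crucial observation is that every letter of $\bx$ (or $\by$) appearing in the $S$-chain portion of the automaton trace, that is, at steps $k$ with $m \leq k \leq N-1$, lies in $\partial\Sigma_K$: by Definition~\ref{Topologyautomaton}$(\textrm{\rmnum{2}})$, the transition $S_{uv} \to S_{u'v'}$ on input $(i,j)$ requires $f_i(v') = v$ with $v \in P$, which shows that $v$ has a coding starting with $i$ and hence $i \in \partial\Sigma_K$ (and analogously on the $y$-side; the divergence letter $x_m$ likewise lies in $\partial\Sigma_K$ whenever the automaton enters an $S$-state rather than Exit). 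Consequently all ratios occurring beyond the common prefix $\bx|_{m-1}$ obey the snowflake relation $r'_i = r_i^s$. The overall scale factor coming from the common prefix may involve non-boundary letters, but by self-similarity any comparison of three points $x, y, z$ sharing such a prefix can be pulled back through the similitudes $f_{\bx|_{m-1}}$ and $g_{\bx|_{m-1}}$ to the corresponding comparison on the whole attractors; since the QS condition (\ref{def_quasisymmetric}) is scale-invariant the common-prefix factor then cancels. Checking (\ref{def_quasisymmetric}) case by case against the distance formula produces an increasing homeomorphism $\eta$ of $[0,\infty)$ of snowflake type.

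The hardest step is proving the sharp distance formula itself, especially its lower bound when the automaton traverses a long $S$-chain: one must iteratively exploit ASC and track which side ``fails first'' at step $N$, converting the fact that one of $x, y$ lies a definite relative distance from $z$ into a usable bound on $d_1(x,y)$. For part $(\textrm{\rmnum{2}})$, the delicate point beyond this is the reduction absorbing the non-boundary common-prefix letters into QS scale-invariance; this requires verifying that a single modulus $\eta$ works uniformly across all configurations.
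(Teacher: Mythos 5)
Your overall architecture matches the paper's: construct the natural conjugacy $\widetilde\pi_{K'}\circ\widetilde\pi_K^{-1}$, replace the H\"older estimate of Theorem~\ref{main 1} by a sharp symbolic distance formula (the paper's metric-like quantity $\rho_K$, defined via the \emph{separation prefix} of Definition~\ref{separationprefix} and controlled by Lemma~\ref{Lipschitz}), prove part~(\textrm{\rmnum{1}}) by observing that identical ratios give $\rho_K=\rho_{K'}$, and prove part~(\textrm{\rmnum{2}}) by a case analysis exploiting that the $S$-chain letters lie in $\partial\Sigma_K$ (the paper's Lemma~\ref{partialSigmaK1} and Corollary~\ref{partialSigmaK2}). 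Your argument for the $S$-to-$S$ transition is correct: $f_i(v')=v$ with $v\in P$ gives a coding of $v$ beginning with $i$, so $i\in\partial\Sigma_K$.

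However, there are two genuine gaps in part~(\textrm{\rmnum{2}}).  First, you claim ``all ratios occurring beyond the common prefix obey the snowflake relation,'' including the divergence letter $x_m$ ``whenever the automaton enters an $S$-state.''  This is false.  The $Id\to S_{uv}$ transition only gives $f_{x_m}(v)=f_{y_m}(u)$, i.e.\ $f_{x_m}(v)$ is a \emph{critical} point, not a post-critical one, and there is no reason $x_m\in\partial\Sigma_K$.  (Concretely, in the Dang--Wen example one has $\partial\Sigma_K=\{1,3\}$, but the words $\bx=23^\infty$, $\by=32^\infty$ enter an $S$-state with divergence letter $2\notin\partial\Sigma_K$.)  The same applies to the escape letters $x_m$, $y_n$ in Definition~\ref{separationprefix}\,(\textrm{\rmnum{2}}): the paper's Lemma~\ref{partialSigmaK1} only controls $x_{\ell+2},\dots,x_{m-1}$, not the endpoints, and compensates for the finitely many exceptional letters with the extra $(r'_*)^2$ or $(r'_*)^3$ factors in \eqref{case2a}, \eqref{case4a}, etc.  Your naive snowflake bound $\rho_{K'}(\bx,\by)/\rho_{K'}(\bx,\bz)\le t^s$ is simply not true.

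Second, and more seriously, the pullback-by-common-prefix argument does not absorb the non-boundary letters that arise when $x_{\ell+1}=z_{\ell+1}$ (paper Cases~1.3 and~2.2, where $\ell=|\bx\wedge\by\wedge\bz|$ but $p=|\bx\wedge\bz|>\ell$).  Pulling back by $f_{\bx\wedge\by\wedge\bz}$ is scale-invariant and legitimate, but after the pullback $\bx$ and $\bz$ still share the prefix $x_{\ell+1}\cdots x_p$, whose letters are entirely unconstrained (none of Lemma~\ref{partialSigmaK1} applies to them) and whose length $p-\ell$ can be arbitrarily large.  Neither the snowflake relation nor scale-invariance controls $r'_{x_{\ell+1}\cdots x_p}$ in terms of $r_{x_{\ell+1}\cdots x_p}$.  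The paper's fix is a separate counting argument: the hypothesis $\rho_K(\bx,\by)\le t\,\rho_K(\bx,\bz)$ forces $r_*\le t(r^*)^{p-\ell+1}$, hence $p-\ell+1\le(\log r_*-\log t)/\log r^*$, which yields the $(r'_*)^{(\log t-\log r_*)/\log r^*}$ terms appearing in the paper's final $\eta$.  This length bound is the key idea missing from your sketch, and without it the single modulus $\eta$ you gesture at cannot be produced.
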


\begin{rema}
\emph{Let $K,K'$ be fractal triangles (see Definition \ref{fractalgasket}). If they have the same topology automaton and are of uniform contraction ratio $r$, then $K$ and $K'$ are Lipschitz equivalent, which is part of the result of Huang, Wen, Yang and Zhu \cite{Zhu_2021}.}
\end{rema}
\subsection{Conformal dimension of a class of self-similar dendrites}\label{dendrites}
A \emph{continuum} means a compact connected space.
A \emph{dendrite} means a locally connected continuum containing no simple closed curve.

For the theory related to dendrite, one can refer to the paper \cite{Charatonik_1998} of J. Charatonik and W. Charatonik.
For studies of self-similar dendrites, see \cite{Hata_1985,Tetenov_2017,Dang_2021,Bandt_1990,Kigami_1995,Croydon_2007}.
M. Samuel, A. Tetenov and D. Vaulin \cite{Tetenov_2017} used the polygonal tree systems to construct self-similar dendrites and discussed their classification.
Y. G. Dang and S. Y. Wen \cite{Dang_2021} proved that the conformal dimension of a class of planar self-similar dendrites is one.
Jun Kigami \cite{Kigami_1995} applied the methods of harmonic calculus on
fractals to dendrites.
D. A. Croydon \cite{Croydon_2007} constructed a collection of random self-similar dendrites and calculated its Hausdorff dimension.
We prove the following result.

\begin{theo}\label{main 4}
Let $F=\{f_i\}_{i=1}^N$ be a self-similar p.c.f. IFS with dendrite attractor $K$.
If $F$ satisfies the ASC, then $\dim_C K=1$.
\end{theo}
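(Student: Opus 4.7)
The plan is to prove $\dim_C K = 1$ by bracketing $\dim_C K$ between $1$ and $1$. The lower bound is purely topological; the upper bound leans on Theorem \ref{main 2}(ii).

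For the lower bound $\dim_C K \ge 1$: since $K$ is a non-degenerate dendrite it contains a topological arc $\gamma$. Any quasisymmetric homeomorphism $f\colon(K,d_1)\to(Y,d_Y)$ is in particular a homeomorphism, so $f(\gamma)$ is again a topological arc in $Y$, and every topological arc in a metric space has Hausdorff dimension at least one. Hence $\dim_H Y\ge 1$ for every metric space $(Y,d_Y)$ quasisymmetric to $(K,d_1)$, giving $\dim_C K\ge 1$.

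For the upper bound $\dim_C K \le 1$: the idea is, for each $\epsilon>0$, to produce a self-similar p.c.f.\ IFS $G=\{g_i\}_{i=1}^N$ on some auxiliary complete metric space $(Y,d_Y)$ satisfying (i) SIC and ASC, (ii) the same topology automaton as $F$, and (iii) a contraction-ratio condition $r'_i=r_i^s$ for every $i\in\partial\Sigma_K$, for some $s>0$. Theorem \ref{main 2}(ii) then supplies a quasisymmetric equivalence $(K,d_1)\sim(K',d_Y)$, where $K'$ is the attractor of $G$. Since SIC guarantees the open set condition, $\dim_H K'$ is the unique solution of Moran's equation
\[
\sum_{i=1}^{N}(r'_i)^{\dim_H K'}=1,
\]
in which the $r'_i$ with $i\in\partial\Sigma_K$ are forced to equal $r_i^s$ while the remaining $r'_j$ are free parameters. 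By tuning $s$ together with these free ratios---typically by shrinking the $r'_j$ with $j\notin\partial\Sigma_K$ toward the smallest values compatible with the dendrite structure, or by choosing $s$ close to $\dim_H K$ in the degenerate case $\partial\Sigma_K=\Sigma$---one can make $\dim_H K'\le 1+\epsilon$. Since $K'$ is again a dendrite (sharing the topology automaton of $F$) it automatically satisfies $\dim_H K'\ge 1$, so the tuning drives $\dim_H K'$ down to $1$ in the limit. Letting $\epsilon\to 0$ then yields $\dim_C K\le 1$.

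The chief obstacle is the construction of $G$ itself: one must exhibit a complete metric space $(Y,d_Y)$ supporting $N$ bi-Lipschitz similitudes whose first-level intersections coincide with those prescribed by the topology automaton of $F$, whose resulting attractor is a dendrite, and which jointly satisfy the ASC. The dendritic nature of $K$ is decisive here: the combinatorics of which piece meets which is encoded purely by the automaton, and a dendrite admits a very flexible metric realization, for instance as a quotient $\Sigma^{\infty}/\!\!\sim$ of symbolic sequences by the automaton-induced equivalence, with the metric on the quotient built recursively so that the diameter of descendants decays according to the prescribed contraction ratios. The ASC of $G$ should descend from the uniformity of this recursive construction together with the ASC of $F$. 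Once this geometric construction is in place (likely as a separate lemma earlier in the paper), the remaining step---computing $\dim_H K'$ via Moran's formula and tuning parameters to approach $1$---is routine.
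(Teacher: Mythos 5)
Your lower bound is fine and matches the paper: a non-degenerate dendrite contains an arc, arcs persist under homeomorphism and have Hausdorff dimension at least one, so every quasisymmetric image has dimension at least one.

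Your upper bound has the right overall shape (build something quasisymmetric to $K$ with small Hausdorff dimension via Theorem~\ref{main 2}(ii), then let a parameter go to a limit), but the core construction is left as a wish list, and the gap it hides is real. Two specific problems. First, you propose to build a new p.c.f.\ IFS $G=\{g_i\}_{i=1}^N$ on a new space with the same topology automaton as $F$ and with $r_i'=r_i^s$ for $i\in\partial\Sigma_K$, and you note that in the degenerate case $\partial\Sigma_K=\Sigma$ you would take $s$ close to $\dim_H K$. But if every first-level ratio is constrained by $r_i'=r_i^s$, Moran's equation forces $s\cdot\dim_H K'=\dim_H K$, so tuning $s$ cannot drive $\dim_H K'$ toward $1$ unless you are also allowed to change the combinatorial structure; and even in the non-degenerate case, the ratios for the free indices are not really free, because they must be compatible with the existence of a metric realizing the prescribed incidence pattern and with the ASC. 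The paper avoids this by passing from $F$ to the iterated IFS $F^m$: as $m$ grows the fraction of level-$m$ words lying in $\partial\Sigma_K^m$ shrinks to zero (since $P_F$ is finite), and a crushing majority of cylinders can be given a small weight $\delta$, which is exactly what pushes the similarity dimension toward~$1$. Without iterating, the argument does not close. Second, the paper does not construct an abstract IFS $G$ on a new space at all; it keeps $K$ fixed as a set and builds a new \emph{metric} $D$ on $K$ by way of a graph-directed system on the primary arcs of the dendrite (Proposition~\ref{dendrite proposition} and Lemma~\ref{graph-directed system}), a canonical decomposition of arcs, and a recursive weight function subject to the normalization $L(v)=1$ for each primary arc $v$ (Theorem~\ref{metricD}). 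The quasisymmetric equivalence then comes for free because $(F^m,K,d)$ and $(F^m,K,D)$ trivially share a topology automaton, and the dimension computation is an explicit Moran-type equation whose solution $s_m\to1$ as $m\to\infty$, $\delta\to0$. Your sketch gestures at a ``quotient $\Sigma^\infty/\!\sim$'' metric; that is in the right spirit, but it is precisely this construction, together with the vertex iteration and the counting of private versus non-private cylinders, that carries the entire weight of the theorem, and none of it is supplied in your proposal.
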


\begin{exam}\rm
Y. G. Dang and S. Y. Wen \cite{Dang_2021} consider the follow IFS.
Let $0<\alpha<1/2$ and let
$$f_1^\alpha(z)=\frac{z}{2}, f_2^\alpha(z)=\frac{1}{2}-\alpha iz, f_3^\alpha(z)=\frac{z+1}{2}, f_4^\alpha(z)=\frac{1}{2}+\alpha iz.$$
Let $K_\alpha$ be the attractor of $F=\{f_i^\alpha\}_{i=1}^{4}$ on $\mathbb{R}^2$.
Clearly, $K_\alpha$ is a self-similar dendrite and $F$ satisfies the ASC.
Then $\dim_C K_\alpha=1$ for any $0<\alpha<1/2$ by Theorem \ref{main 4},
which is the result of \cite{Dang_2021}.

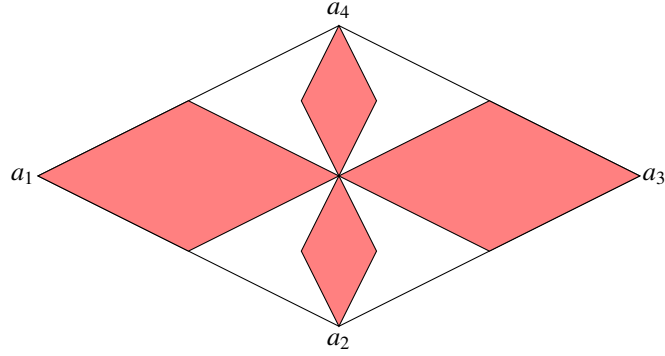
\begin{figure}[h]
\centering
\begin{minipage}{.5\textwidth}
\centering
    \begin{tikzpicture}
    \draw[xscale=4,yscale=4](-1, 0)--(0,0.5)--(1,0)--(0,-0.5)--cycle;
    \draw[xscale=2,yscale=2][shift ={(-1,0)}][fill=red!50](-1, 0)--(0,0.5)--(1,0)--(0,-0.5)--cycle;
    \draw[xscale=2,yscale=2][shift ={(1,0)}][fill=red!50](-1, 0)--(0,0.5)--(1,0)--(0,-0.5)--cycle;
    \draw[xscale=1,yscale=1][rotate =90][shift ={(1,0)}][fill=red!50](-1, 0)--(0,0.5)--(1,0)--(0,-0.5)--cycle;
    \draw[xscale=1,yscale=1][rotate =-90][shift ={(1,0)}][fill=red!50](-1, 0)--(0,0.5)--(1,0)--(0,-0.5)--cycle;
    \draw(-4.2,0)node{\footnotesize $a_1$};
    \draw(4.2,0)node{\footnotesize $a_3$};
    \draw(0,-2.2)node{\footnotesize $a_2$};
    \draw(0,2.2)node{\footnotesize $a_4$};
    \end{tikzpicture}
\end{minipage}
\caption{The first iteration of $K_\alpha$}
\label{fig1}
\end{figure}

\end{exam}
\subsection{Conformal dimension of the fractal gasket}
Let $\triangle\subset\mathbb{R}^2$ be the regular triangle with vertexes $a_1=(0,0),a_2=(1,0),a_3=(1/2,\sqrt{3}/2)$.

\begin{defi}(see \cite{Zhu_2021})\label{fractalgasket}
Let $\{r_1,\ldots,r_N\}\in(0,1)^N$ and $\{d_1,\ldots,d_N\}\subset\mathbb{R}^2$.
Let $K$ be a self-similar set generated by the IFS $F=\{f_i\}_{i=1}^N$, where $f_i(z)=r_i(z+d_i)$.
We call $K$ a \textbf{fractal gasket} if

$(\textrm{\rmnum{1}})$ $\bigcup_{i=1}^Nf_i(\triangle)\subset\triangle$;

$(\textrm{\rmnum{2}})$ for any $i\neq j$, $f_i(\triangle)$ and $f_j(\triangle)$ can only intersect at their vertices.

\noindent We call the $F$ a fractal gasket IFS.
\end{defi}

\begin{theo}\label{connectedcomponent}
Let $K$ be a fractal gasket. If $K$ is totally disconnected, then $\dim_C K=0$.
If $K$ have a connected component, then $\dim_C K=1$.
\end{theo}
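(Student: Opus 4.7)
I plan to handle the two cases of Theorem \ref{connectedcomponent} separately, using Theorem \ref{main 2}(ii) together with the standard topological-dimension lower bound on conformal dimension.

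For the totally disconnected case, the idea is to construct, for each $s > 1$, a companion fractal gasket $G_s = \{g_i\}_{i=1}^N$ sharing the topology automaton of $F$ but with contraction ratios $r_i^s$. Since $K$ is totally disconnected, any shared vertex among the pieces is an isolated point of $K$, so we may shrink each $f_i(\triangle)$ about that shared vertex to obtain $g_i(\triangle)$ preserving the vertex-incidence pattern, with the ASC and SIC inherited by construction. Theorem \ref{main 2}(ii) then yields that $(K,d_1)$ and the attractor $K_s$ of $G_s$ are quasisymmetrically equivalent. The Moran identity $\sum_i (r_i^s)^{d_s}=1$ for $d_s = \dim_H K_s$ gives $s\,d_s = \dim_H K$, so $d_s\to 0$ as $s\to\infty$, whence $\dim_C K = 0$.

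When $K$ has a non-trivial connected component $K_0$, the lower bound $\dim_C K \geq 1$ is immediate: $K_0$ is a non-degenerate continuum, hence its topological dimension is at least one, and topological dimension is bounded above by conformal dimension. Since a quasisymmetric homeomorphism of $K$ restricts to one of $K_0$, this yields $\dim_C K \geq \dim_C K_0 \geq 1$.

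The upper bound $\dim_C K \leq 1$ is the main obstacle. Theorem \ref{main 2}(ii) does not suffice alone: if every piece of $F$ carries a post-critical point (as with the Sierpinski gasket, where $\partial\Sigma_K = \{1,2,3\}$), then forcing $r'_i = r_i^s$ with $s > 1$ would destroy the vertex-sharing required by the topology automaton, so no valid replacement gasket of smaller dimension can be produced in this way. My plan is to adapt the Tyson--Wu construction: for each $\epsilon > 0$, produce a quasisymmetric deformation of $d_1$ whose Hausdorff dimension is at most $1 + \epsilon$ by redistributing mass along a family of curves realising the topological dimension of $K_0$. When $K_0$ is dendritic (its piece-sharing graph is a tree), the method behind Theorem \ref{main 4} can be imported; when $K_0$ contains cycles, the construction must be carried out directly within the planar geometry of the gasket, using the topology automaton to coordinate the deformation across all pieces. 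Ensuring that the deformation extends to the totally disconnected complement of $K_0$, which is controlled by the first case, is the final technical step.
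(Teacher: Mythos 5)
Your lower-bound argument is correct and matches the paper's: a non-degenerate continuum has topological dimension at least one, topological dimension is a lower bound for conformal dimension, and monotonicity passes this to $K$. But there are two genuine gaps. For the totally disconnected assertion, the key claim --- that a shared vertex of two pieces is necessarily an isolated point of $K$ --- is false, since totally disconnected does not imply discrete. For example take $f_1(z)=z/4$, $f_2(z)=z/4+3/4$, $f_3(z)=z/4+1/4$; the attractor has Hausdorff dimension $\log 3/\log 4<1$ and hence is totally disconnected, yet $f_1(\triangle)\cap f_3(\triangle)=\{(1/4,0)\}$ and $(1/4,0)=f_1(a_2)$ is a limit of the points $f_1(f_2^n f_1(a_2))\in f_1(K)$. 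So ``shrinking about the shared vertex'' is not available in general, and the existence of a companion gasket $G_s$ with the same topology automaton and ratios $r_i^s$ on $\partial\Sigma_K$ needs its own argument. (Note also that the paper's Section~\ref{section5} only proves the connected-component assertion; the totally disconnected case is asserted but not proved there.)

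For the connected-component upper bound --- the real content of the theorem --- your proposal stops at ``adapt the Tyson--Wu construction'' without producing a deformation, and the hard parts are precisely what is omitted. The paper's route is concretely different. It first constructs (Lemma~\ref{construct}) a \emph{larger connected} fractal gasket $K'\supset K$ with $\partial\triangle\subset K'$; this is how the totally disconnected remainder of $K$ is handled, since then $\dim_C K\le\dim_C K'$ by monotonicity and one never has to extend a deformation over the disconnected part, contrary to what your last sentence envisages. It then passes to the $m$-level vertex iteration $F'_m$, assigns explicit edge weights $(\tau_0,R)$ on the refined graphs by \eqref{123} and \eqref{gi}, verifies these form a \emph{good assignment} in the sense of Section~4 (using Lemmas~\ref{subpaths}, \ref{ab}, \ref{V3} and Corollary~\ref{replaced}), applies Theorem~\ref{general} to conclude $\dim_C K'\le\dim_S(F'_m,D)$, and finally lets $m\to\infty$. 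You correctly observe that Theorem~\ref{main 2}(\rmnum{2}) with $r'_i=r_i^s$ cannot be applied directly when every piece carries a post-critical point; but the fix (enlargement to a connected gasket, vertex iteration, the weighted-graph metric and the good-assignment verification) is exactly the missing machinery, and without it the quasisymmetric change of metric you appeal to is not constructed.
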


In fact, we conjecture that the conformal dimension of all p.c.f self-similar sets with connected component satisfying the SIC and ASC is 1, but we have not proved it yet.

\begin{exam}\rm
Let $$f_1(z)=\frac{z}{2}, \ \ f_2(z)=\frac{z+1}{2}, \ \ f_3(z)=\frac{z+\frac{1}{2}+\frac{\sqrt{3}}{2}i}{2}.$$
Let $K$ be the attractor of $F=\{f_i\}_{i=1}^{3}$ on $\mathbb{R}^2$.
We also say that $K$ is a \emph{Sierpinski gasket}.
J. T. Tyson and J. M. Wu \cite{TW_2006} considered the conformal dimension of the Sierpinski gasket as follows.

Fix a positive integer $m\in\mathbb{N}^*$. Let $F_m$ be the \emph{m-level vertex iteration} of $F$ (see section \ref{section5} for the definition), i.e.
$$F_m=\{f_i^{(m+1)};i=1,2,3\}\cup
\bigcup_{i=1}^3\bigcup_{\ell=1}^m\{f_i^\ell\circ f_k;k=1,2,3 \ \text{and} \ k\neq i\}.$$
Clearly, the attractor of the IFS $F_m$ is still the Sierpinski gasket $K$.
Figure \ref{fig2} shows the images of $\triangle$ under the mappings in $F_1$ and $F_2$.
Next, they define a deformation of $F_m$, denoted by $\mathcal{G}_m$. $\mathcal{G}_m$ replace the geometrically decreasing sequence of triangles $f_i^\ell\circ f_k(\triangle)$ with a row of equally sized triangles. $\mathcal{G}_1$ and $\mathcal{G}_2$ are shown in Figure \ref{fig3}.
Let $K_m$ be the attractor of $\mathcal{G}_m$. Finally they proved $K_m$ and $K$ are quasisymmetrically equivalent, and the Hausdorff dimension of $K_m$ tends to 1 when $m$ tends to $\infty$.

\begin{figure}[h]
\centering
\begin{minipage}[t]{.49\textwidth}
\centering
\begin{tikzpicture}[xscale=1,yscale=1]
    \draw[shift ={(0,0)}][fill=red!50](0,0)--(0.5,-0.866)--(-0.5,-0.866)--cycle;
    \draw[shift ={(-0.5,-0.866)}][fill=red!50](0,0)--(0.5,-0.866)--(-0.5,-0.866)--cycle;
    \draw[shift ={(0.5,-0.866)}][fill=red!50](0,0)--(0.5,-0.866)--(-0.5,-0.866)--cycle;

    \draw[shift ={(-1,-1.732)}][fill=red!50](0,0)--(0.5,-0.866)--(-0.5,-0.866)--cycle;
    \draw[shift ={(-1.5,-2.598)}][fill=red!50](0,0)--(0.5,-0.866)--(-0.5,-0.866)--cycle;
    \draw[shift ={(-0.5,-2.598)}][fill=red!50](0,0)--(0.5,-0.866)--(-0.5,-0.866)--cycle;
    \draw[shift ={(1,-1.732)}][fill=red!50](0,0)--(0.5,-0.866)--(-0.5,-0.866)--cycle;
    \draw[shift ={(1.5,-2.598)}][fill=red!50](0,0)--(0.5,-0.866)--(-0.5,-0.866)--cycle;
    \draw[shift ={(0.5,-2.598)}][fill=red!50](0,0)--(0.5,-0.866)--(-0.5,-0.866)--cycle;
    \end{tikzpicture}%
\end{minipage}
\hfill
\begin{minipage}[t]{.5\textwidth}
\centering
\begin{tikzpicture}[xscale=1,yscale=1]
    \draw[shift ={(0,0)},scale =0.5][fill=red!50](0,0)--(0.5,-0.866)--(-0.5,-0.866)--cycle;
    \draw[shift ={(0.25,-0.433)},scale =0.5][fill=red!50](0,0)--(0.5,-0.866)--(-0.5,-0.866)--cycle;
    \draw[shift ={(-0.25,-0.433)},scale =0.5][fill=red!50](0,0)--(0.5,-0.866)--(-0.5,-0.866)--cycle;

    \draw[shift ={(-0.5,-0.866)}][fill=red!50](0,0)--(0.5,-0.866)--(-0.5,-0.866)--cycle;
    \draw[shift ={(0.5,-0.866)}][fill=red!50](0,0)--(0.5,-0.866)--(-0.5,-0.866)--cycle;

    \draw[shift ={(-1,-1.732)}][fill=red!50](0,0)--(0.5,-0.866)--(-0.5,-0.866)--cycle;
    \draw[shift ={(-0.5,-2.598)}][fill=red!50](0,0)--(0.5,-0.866)--(-0.5,-0.866)--cycle;
    \draw[shift ={(-1.5,-2.598)},scale =0.5][fill=red!50](0,0)--(0.5,-0.866)--(-0.5,-0.866)--cycle;
    \draw[shift ={(-1.75,-3.031)},scale =0.5][fill=red!50](0,0)--(0.5,-0.866)--(-0.5,-0.866)--cycle;
    \draw[shift ={(-1.25,-3.031)},scale =0.5][fill=red!50](0,0)--(0.5,-0.866)--(-0.5,-0.866)--cycle;

    \draw[shift ={(1,-1.732)}][fill=red!50](0,0)--(0.5,-0.866)--(-0.5,-0.866)--cycle;
    \draw[shift ={(0.5,-2.598)}][fill=red!50](0,0)--(0.5,-0.866)--(-0.5,-0.866)--cycle;
    \draw[shift ={(1.5,-2.598)},scale =0.5][fill=red!50](0,0)--(0.5,-0.866)--(-0.5,-0.866)--cycle;
    \draw[shift ={(1.75,-3.031)},scale =0.5][fill=red!50](0,0)--(0.5,-0.866)--(-0.5,-0.866)--cycle;
    \draw[shift ={(1.25,-3.031)},scale =0.5][fill=red!50](0,0)--(0.5,-0.866)--(-0.5,-0.866)--cycle;
    \end{tikzpicture}
\end{minipage}%
\caption{The images of $\triangle$ under the mappings in $F_1$ and $F_2$}
\label{fig2}
\end{figure}
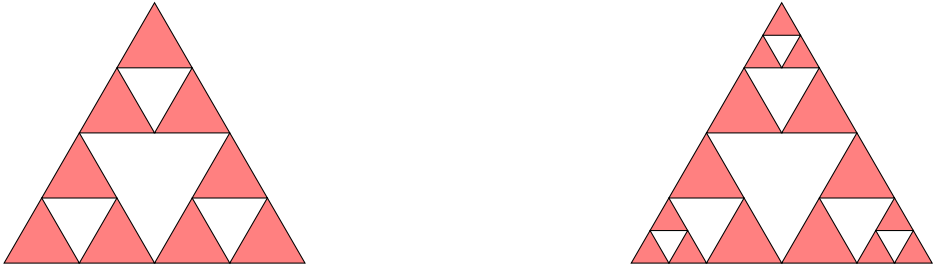

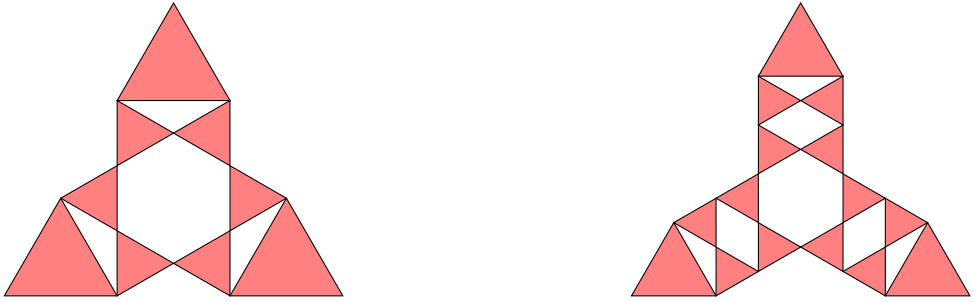
\begin{figure}[h]
\centering
\begin{minipage}[t]{.49\textwidth}
\centering
\begin{tikzpicture}[xscale=4.5,yscale=4.5]
    \draw[shift ={(0,0)}, scale =0.3333][fill=red!50](0,0)--(0.5,-0.866)--(-0.5,-0.866)--cycle;
    \draw[shift ={(-0.1666,-0.2886)}, scale =0.19245][fill=red!50](0,0)--(0,-1)--(0.866,-0.5)--cycle;
    \draw[shift ={(0.1666,-0.2886)}, scale =0.19245][fill=red!50](0,0)--(0,-1)--(-0.866,-0.5)--cycle;

    \draw[shift ={(-0.3334,-0.5774)}, scale =0.19245][fill=red!50](0,0)--(0.866,0.5)--(0.866,-0.5)--cycle;
    \draw[shift ={(-0.1667,-0.866)}, scale =0.19245][fill=red!50](0,0)--(0,1)--(0.866,0.5)--cycle;
    \draw[shift ={(-0.3334,-0.5774)}, scale =0.3333][fill=red!50](0,0)--(0.5,-0.866)--(-0.5,-0.866)--cycle;

    \draw[shift ={(0.3334,-0.5774)}, scale =0.19245][fill=red!50](0,0)--(-0.866,0.5)--(-0.866,-0.5)--cycle;
    \draw[shift ={(0.1667,-0.866)}, scale =0.19245][fill=red!50](0,0)--(0,1)--(-0.866,0.5)--cycle;
    \draw[shift ={(0.3334,-0.5774)}, scale =0.3333][fill=red!50](0,0)--(0.5,-0.866)--(-0.5,-0.866)--cycle;
    \end{tikzpicture}%
\end{minipage}
\hfill
\begin{minipage}[t]{.5\textwidth}
\centering
\begin{tikzpicture}[xscale=4.5,yscale=4.5]
    \draw[shift ={(0,0)}, scale =0.25][fill=red!50](0,0)--(0.5,-0.866)--(-0.5,-0.866)--cycle;
    \draw[shift ={(-0.125,-0.2165)}, scale =0.1443][fill=red!50](0,0)--(0,-1)--(0.866,-0.5)--cycle;
    \draw[shift ={(-0.125,-0.3608)}, scale =0.1443][fill=red!50](0,0)--(0,-1)--(0.866,-0.5)--cycle;
    \draw[shift ={(0.125,-0.2165)}, scale =0.1443][fill=red!50](0,0)--(0,-1)--(-0.866,-0.5)--cycle;
    \draw[shift ={(0.125,-0.3608)}, scale =0.1443][fill=red!50](0,0)--(0,-1)--(-0.866,-0.5)--cycle;

    \draw[shift ={(-0.375,-0.6495)}, scale =0.25][fill=red!50](0,0)--(0.5,-0.866)--(-0.5,-0.866)--cycle;
    \draw[shift ={(-0.375,-0.6495)}, scale =0.1443][fill=red!50](0,0)--(0.866,0.5)--(0.866,-0.5)--cycle;
    \draw[shift ={(-0.25,-0.866)}, scale =0.1443][fill=red!50](0,0)--(0,1)--(0.866,0.5)--cycle;
    \draw[shift ={(-0.25,-0.57735)}, scale =0.1443][fill=red!50](0,0)--(0.866,0.5)--(0.866,-0.5)--cycle;
    \draw[shift ={(-0.125,-0.79385)}, scale =0.1443][fill=red!50](0,0)--(0,1)--(0.866,0.5)--cycle;

    \draw[shift ={(0.375,-0.6495)}, scale =0.25][fill=red!50](0,0)--(-0.5,-0.866)--(0.5,-0.866)--cycle;
    \draw[shift ={(0.375,-0.6495)}, scale =0.1443][fill=red!50](0,0)--(-0.866,0.5)--(-0.866,-0.5)--cycle;
    \draw[shift ={(0.25,-0.866)}, scale =0.1443][fill=red!50](0,0)--(0,1)--(-0.866,0.5)--cycle;
    \draw[shift ={(0.25,-0.57735)}, scale =0.1443][fill=red!50](0,0)--(-0.866,0.5)--(-0.866,-0.5)--cycle;
    \draw[shift ={(0.125,-0.79385)}, scale =0.1443][fill=red!50](0,0)--(0,1)--(-0.866,0.5)--cycle;
    \end{tikzpicture}
\end{minipage}%
\caption{The images of $\triangle$ under the mappings in $\mathcal{G}_1$ and $\mathcal{G}_2$}
\label{fig3}
\end{figure}

In this paper, we construct a new metric $D$ on $K$ such that $(K,d)$ and $(K,D)$ are quasisymmetrically equivalent, where $d$ is the Euclidean metric on $\mathbb{R}^2$.
Let $G_0$ be the complete graph with the vertex set $\{a_1,a_2,a_3\}$.
Let $\tau_0: G_0\rightarrow [0,\infty)$ be a weight function satisfying the following conditions:
if $e=\overline{a_ia_j}$ with $i\neq j$, $\tau_0(e)=1$; if $e=\overline{a_ia_i}$, $\tau_0(e)=0$. Then $(G_0,\tau_0)$ is a weighted graph. We define $D_0(x,y)$ by the minimum of the weights of all paths joining $x,y$ in $G_0$. Then $D_0$ is a metric on $\{a_1,a_2,a_3\}$.

Fix a positive integer $m\in\mathbb{N}^*$.
Let $F_m$ be the m-level vertex iteration of $F$.
For $n\geq 1$, let $G_n$ be the union of affine images of $G_0$ under $F_m^n$, that is,
$$G_n=\bigcup_{g\in F_m^n} g(G_0).$$
Let $R(g)=\frac{1}{(2m+2)^n}$.
Let $e$ be an edge in $G_n$, then $e$ can be written as $e=g(h)$, where $g\in F_m^n$ and $h\in G_0$.
We define the weight of the edge $e$ in $G_n$, denoted by $\tau_n(e)$, to be $R(g)\tau_0(h)$.
Then $(G_n,\tau_n)$ is a weighted graph. We define $D_n(x,y)$ by the minimum of the weights of all paths joining $x,y$ in $G_n$. Then $D_n$ is a metric on $\cup_{g\in F_m^n} g(\{a_1,a_2,a_3\})$ and satisfies compatibility.

For any $x,y\in K$. Let $\{x_n\}_{n\geq 1}, \{y_n\}_{n\geq 1}$ be two sequences of points such that $x_n\rightarrow x,y_n\rightarrow y$ as $n\rightarrow \infty$,
where $x_n,y_n\in\cup_{g\in F_m^n} g(\{a_1,a_2,a_3\})$.
We define $$D(x,y)=\lim_{n\rightarrow\infty}D_n(x_n,y_n).$$
For any $g\in F_m$, $g:(K,D)\to (K,D)$ is a similitude with contraction ratio $R(g)=\frac{1}{2m+2}$. Then $(K,d)$ and $(K,D)$ are quasisymmetrically equivalent by Theorem \ref{main 2}.
So the conformal dimension of $K$ is no more than the similarity dimension of $F_m$ with respect to the metric $D$.
Since $\dim_S(F_m,D)=\frac{\log(6m+3)}{\log(2m+2)}\rightarrow1$ as $m\rightarrow\infty$.
Thus $\dim_C K=1$.
\end{exam}

\section{\textbf{The Proof of the Theorem \ref{main 1} and the Theorem \ref{main 2}}}
For the remainder of this section, $F=\{f_i\}_{i=1}^N$ will be a p.c.f. IFS that satisfies the SIC and the ASC with attractor $K$, and $M_F$ is the topology automaton of $F$.

For $x\in K$, we call the \emph{lowest coding} of $x$ is the smallest member with respect to lexicographical order in $\pi_K^{-1}(\{x\})$. We set
$$\Sigma_K=\{\text{ lowest codings of points in } K\},$$
then $\widetilde{\pi}_K:=\pi_K|_{\Sigma_K}: \Sigma_K\rightarrow K$ is a bijection.

By inputting symbol string $(\bx,\by)\in\Sigma^\infty\times\Sigma^\infty$ to $M_F$,
we obtain a sequence of states starting from $Id$ and call it the \emph{itinerary} of $(\bx,\by)$, i.e.
$$Id\xrightarrow{\bigg(\begin{array}{c} x_1\\ y_1\\ \end{array}\bigg)}\text{state 1}
\xrightarrow{\bigg(\begin{array}{c} x_2\\ y_2\\ \end{array}\bigg)}\text{state 2} \rightarrow\cdots.$$
If we arrive at the state $Exit$, then we stop there and the itinerary is finite, otherwise, it is infinite.
Following \cite{Zhu_2021}, for $\bx,\by\in \Sigma_K$, we define $T_F(\bx,\by)$,
the \emph{surviving time} of $(\bx,\by)$, to be the number of the input pairs when arriving at $Exit$.
In particular, if the itinerary is infinite, then we define $T_F(\bx,\by)=\infty$.

For $\mathbf{I}\in\Sigma^\ast$, we denote by $|\mathbf{I}|$ the length of $\mathbf{I}$,
and use $\mathbf{I}\wedge \mathbf{J}$ to denote the maximal common prefix of $\mathbf{I}$ and $\mathbf{J}$.

\begin{lem}\label{smallest number}
Let $\bx=(x_k)_{k=1}^\infty,\by=(y_k)_{k=1}^\infty\in\Sigma_K$ with $\bx\neq\by$.
If $T_F(\bx,\by)=n$, then $n$ is the smallest number such that $f_{x_1\ldots x_n}(K)\cap f_{y_1\ldots y_n}(K)=\emptyset$.
\end{lem}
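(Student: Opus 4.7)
The plan is to correlate the automaton's state at step $k$ with whether $f_{x_1\ldots x_k}(K)\cap f_{y_1\ldots y_k}(K)$ is still nonempty. Let $m=|\bx\wedge\by|$; for $k\le m$ the automaton sits at $Id$ and the two cylinders coincide, so they certainly intersect. At step $m+1$ we have $x_{m+1}\neq y_{m+1}$, and the SIC forces $f_{x_{m+1}}(K)\cap f_{y_{m+1}}(K)$ to be either empty, in which case the automaton jumps to $Exit$, giving $n=m+1$ and settling the lemma at once, or to equal a single point $p=f_{x_{m+1}}(v)=f_{y_{m+1}}(u)$ with $u,v\in P$, in which case the automaton enters the state $S_{uv}$.

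Assume now the latter case. I would use the decomposition
\[
f_{x_1\ldots x_k}(K)\cap f_{y_1\ldots y_k}(K)=f_{x_1\ldots x_m}\!\Bigl(f_{x_{m+1}}\bigl(f_{x_{m+2}\ldots x_k}(K)\bigr)\cap f_{y_{m+1}}\bigl(f_{y_{m+2}\ldots y_k}(K)\bigr)\Bigr),
\]
together with the observation that the only candidate intersection point inside $f_{x_{m+1}}(K)\cap f_{y_{m+1}}(K)=\{p\}$ is $p$ itself; this reduces nonemptiness at level $k$ to the twin conditions $v\in f_{x_{m+2}\ldots x_k}(K)$ and $u\in f_{y_{m+2}\ldots y_k}(K)$. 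The core of the argument is then an induction on $k\ge m+1$ showing that, as long as the automaton is in some state $S_{u_k v_k}$ after $k$ inputs, one has $f_{x_{m+2}\ldots x_k}(v_k)=v$ and $f_{y_{m+2}\ldots y_k}(u_k)=u$ (with the empty composition understood as the identity at $k=m+1$). The base case is immediate and the inductive step reads directly off the transition rule of Definition \ref{Topologyautomaton}.

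Using injectivity of the contractions, the induction converts the exit condition ``$v_k\notin f_{x_{k+1}}(K)$ or $u_k\notin f_{y_{k+1}}(K)$'' into ``$v\notin f_{x_{m+2}\ldots x_{k+1}}(K)$ or $u\notin f_{y_{m+2}\ldots y_{k+1}}(K)$'', which by the previous paragraph is exactly the statement that $f_{x_1\ldots x_{k+1}}(K)\cap f_{y_1\ldots y_{k+1}}(K)=\emptyset$. Hence the first step at which the automaton enters $Exit$ coincides with the first step at which the two cylinders become disjoint, which is the minimality claim.

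The main subtlety, and the only place where the finer structure of $P$ enters, is to verify that the preimage labels $v_{k+1}=f_{x_{k+1}}^{-1}(v_k)$ and $u_{k+1}=f_{y_{k+1}}^{-1}(u_k)$ still lie in $P$ whenever they exist in $K$, so that the non-exit transition lands in a genuine state $S_{u_{k+1}v_{k+1}}$ rather than being artificially forced to $Exit$. This is handled using the description $P=\bigcup_{\omega\in\Sigma^*}f_\omega^{-1}(C)\cap K$: from $f_\omega(v_k)\in C$ for some $\omega$ one obtains $f_{\omega x_{k+1}}(v_{k+1})\in C$ and hence $v_{k+1}\in P$, and similarly for $u_{k+1}$. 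Once this compatibility is in hand the remainder of the proof is bookkeeping.
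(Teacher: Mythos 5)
Your proof is correct and follows essentially the same approach as the paper's: both track the automaton state through its transitions to identify the single intersection point of the corresponding cylinders, and then read the exit condition off as disjointness of the level-$n$ cylinders while keeping the level-$(n-1)$ ones intersecting. You are a bit more explicit than the paper in one spot, verifying via $P=\bigcup_{\omega}f_\omega^{-1}(C)\cap K$ that the pulled-back labels remain in $P$, a point the paper leaves implicit in the well-definedness of the transition rule.
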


\begin{proof}
Denote $m=|\bx\wedge\by|$. Clearly $n>m$.

If $n=m+1$, then $f_{x_1\ldots x_n}(K)\cap f_{y_1\ldots y_n}(K)=\emptyset$ by the definition of surviving time. The Lemma holds.
Suppose $m+1<n<\infty$.
The itinerary of $(\bx, \by)$ are
\begin{equation}\label{itinerary}
Id\xrightarrow{\bigg(\begin{array}{c} x_1\\ y_1\\ \end{array}\bigg)} Id
\rightarrow\cdots
\xrightarrow{\bigg(\begin{array}{c} x_m\\ y_m\\ \end{array}\bigg)} Id
\xrightarrow{\bigg(\begin{array}{c} x_{m+1}\\ y_{m+1}\\ \end{array}\bigg)} S_{u_1v_1}
\rightarrow\cdots\xrightarrow{\bigg(\begin{array}{c} x_{n-1}\\ y_{n-1}\\ \end{array}\bigg)} S_{u_{n-m-1}v_{n-m-1}}
\xrightarrow{\bigg(\begin{array}{c} x_n\\ y_n\\ \end{array}\bigg)} Exit.
\end{equation}

First we show that $f_{x_1\ldots x_{n-1}}(K)\cap f_{y_1\ldots y_{n-1}}(K)\neq\emptyset$
and $f_{x_1\ldots x_{n-1}}(v_{n-m-1})=f_{y_1\ldots y_{n-1}}(u_{n-m-1})$ is the intersection point.
By \eqref{itinerary}, we have $(u_1,v_1)\in P^2$ ($P$ is the post-critical set) and $f_{x_{m+1}}(v_1)=f_{y_{m+1}}(u_1)$, both sides of the role of the common prefix mapping, we have $f_{x_1\ldots x_{m+1}}(v_1)=f_{y_1\ldots y_{m+1}}(u_1)$.
By the SIC, $f_{x_1\ldots x_{m+1}}(K)\cap f_{y_1\ldots y_{m+1}}(K)\neq\emptyset$
and $f_{x_1\ldots x_{m+1}}(v_1)=f_{y_1\ldots y_{m+1}}(u_1)$ is the intersection point.

Moreover, $(\{u_k\}_{k=2}^{n-m-1},\{v_k\}_{k=2}^{n-m-1})\subset P^2$ and $f_{x_{m+k}}(v_k)=v_{k-1},f_{y_{m+k}}(u_k)=u_{k-1}$ for $2\leq k\leq n-m-1$.
For each $k$, we replace in turn the equation $f_{x_{m+1}}(v_1)=f_{y_{m+1}}(u_1)$, and
both sides of the role of the common prefix mapping, then we have $$f_{x_1\ldots x_{n-1}}(v_{n-m-1})=f_{y_1\ldots y_{n-1}}(u_{n-m-1}).$$

Since $u_{n-m-1}\notin f_{y_n}(K)$ or $v_{n-m-1}\notin f_{x_n}(K)$, we have $f_{x_1\ldots x_n}(K)\cap f_{y_1\ldots y_n}(K)=\emptyset$.
\end{proof}

Let $X=(X,d)$ be a metric space. For $A,B\subset X$, we define the \emph{distance} between $A$ and $B$ by
$$dist(A,B)=\inf\{d(x,y): x\in A , y\in B\},$$
and define the \emph{diameter} of set $A$ by
$diam(A)=\sup\{d(x,y): x,y\in A\}$.

Set $\xi_1$ to be the minimum distance of all 1-order non-intersecting cylinders, i.e.
\begin{equation}\label{xiK}
\xi_1=\min\left\{dist\left(f_i(K),f_j(K)\right); i,j\in\Sigma \ \text{and} \ f_i(K)\cap f_j(K)=\emptyset\right\}.
\end{equation}
Set $\xi_2$ to be the minimum distance between a 1-order cylinder and a post-critical point that does not intersect with it, i.e.
\begin{equation}\label{xiK2}
\xi_2=\min\left\{dist\left(f_i(K),a\right); i\in\Sigma, a\in P \ \text{and} \ a\notin f_i(K)\right\}.
\end{equation}
Clearly $\xi_1,\xi_2>0$.

For the family of bi-Lipschitz mappings $F=\{f_i\}_{i=1}^N$,
we denote $A_i (B_i)$ by the left (right) Lipschitz constant of $f_i$, and denote
$A_\ast=\min\limits_{1\leq i\leq N}\{A_i\}$, $B^\ast=\max\limits_{1\leq i\leq N}\{B_i\}$.

\begin{lem}\label{Holderequivalent1}
Let $\bx=(x_k)_{k=1}^\infty,\by=(y_k)_{k=1}^\infty\in \Sigma_K$ with
$\bx\neq\by$. If $T_F(\bx,\by)=n$, then there exists constants $c_1,c_2>0$ such
that
\begin{equation}\label{c1c2}
c_1(A_\ast)^n\leq d\left(\pi(\bx),\pi(\by) \right)\leq c_2(B^\ast)^n.
\end{equation}
\end{lem}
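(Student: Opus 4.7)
The plan is to treat the upper and lower bounds separately and to first reduce to the case $|\bx\wedge\by|=0$. Writing $m=|\bx\wedge\by|$, we have $\pi(\bx)=f_{x_1\ldots x_m}(\pi(\sigma^m\bx))$ and analogously for $\by$; the composition $f_{x_1\ldots x_m}$ is bi-Lipschitz with left constant at least $(A_\ast)^m$ and right constant at most $(B^\ast)^m$, and $T_F(\sigma^m\bx,\sigma^m\by)=n-m$. Hence it suffices to establish bounds for $d(\pi(\sigma^m\bx),\pi(\sigma^m\by))$ comparable to $(A_\ast)^{n-m}$ and $(B^\ast)^{n-m}$, and then multiply by the scaling constants above.

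The upper bound is essentially immediate from the previous lemma. By Lemma~\ref{smallest number}, the cylinders $f_{x_1\ldots x_{n-1}}(K)$ and $f_{y_1\ldots y_{n-1}}(K)$ share at least one point; since $\pi(\bx)$ lies in the former and $\pi(\by)$ in the latter, and each cylinder has diameter at most $(B^\ast)^{n-1}\operatorname{diam}(K)$, the triangle inequality yields $d(\pi(\bx),\pi(\by))\leq 2(B^\ast)^{n-1}\operatorname{diam}(K)$, so we take $c_2=2\operatorname{diam}(K)/B^\ast$.

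The lower bound is the main obstacle and splits into two cases according to the itinerary. If $n-m=1$, the first non-Id transition already enters Exit, so $f_{x_{m+1}}(K)\cap f_{y_{m+1}}(K)=\emptyset$ and \eqref{xiK} gives $d(\pi(\sigma^m\bx),\pi(\sigma^m\by))\geq\xi_1$. If $n-m\geq 2$, then $f_{x_{m+1}}(K)\cap f_{y_{m+1}}(K)$ is the single point $w=f_{x_{m+1}}(v_1)=f_{y_{m+1}}(u_1)$, and the ASC applied to this pair of $1$-order cylinders yields
\[
d(\pi(\sigma^m\bx),\pi(\sigma^m\by))\geq c\,\max\{d(\pi(\sigma^m\bx),w),\,d(\pi(\sigma^m\by),w)\}.
\]

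Reading the itinerary \eqref{itinerary}, each transition $S_{u_kv_k}\to S_{u_{k+1}v_{k+1}}$ at step $m+k+1$ forces $v_k=f_{x_{m+k+1}}(v_{k+1})$ and $u_k=f_{y_{m+k+1}}(u_{k+1})$, while at the Exit step $n$ either $v_{n-m-1}\notin f_{x_n}(K)$ or $u_{n-m-1}\notin f_{y_n}(K)$. Assuming the former without loss of generality and iterating the left bi-Lipschitz inequality through the recursion $v_k=f_{x_{m+k+1}}(v_{k+1})$, we obtain
\[
d(\pi(\sigma^m\bx),w)\geq A_{x_{m+1}}\cdots A_{x_{n-1}}\,d(\pi(\sigma^{n-1}\bx),v_{n-m-1})\geq(A_\ast)^{n-m-1}\xi_2,
\]
where the final estimate uses \eqref{xiK2} since $\pi(\sigma^{n-1}\bx)\in f_{x_n}(K)$ and $v_{n-m-1}\notin f_{x_n}(K)$. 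Plugging this into the ASC inequality and rescaling by $(A_\ast)^m$ produces the desired lower bound with $c_1=\min\{\xi_1,c\xi_2\}/A_\ast$. The delicate point is precisely this iteration: the ASC is stated only for first-level cylinders, so we must pull the intersection point back through successive $f_{x_j}$ using the automaton's transition rules to convert the single ASC estimate into a geometric bound scaling as $(A_\ast)^n$.
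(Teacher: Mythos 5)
Your proof is correct and follows essentially the same route as the paper's: the upper bound via the non-empty intersection of $(n-1)$-level cylinders from Lemma~\ref{smallest number}, and the lower bound by separating the cases $n=m+1$ (use $\xi_1$) and $n>m+1$ (pull back by the common prefix, apply the ASC at the first level, and propagate $\xi_2$ down the itinerary chain). The only cosmetic difference is that you phrase the prefix reduction in terms of the shift $\sigma^m$ whereas the paper writes it as $f_{\mathbf I}^{-1}$; since $\pi(\sigma^m\bx)=f_{\mathbf I}^{-1}(\pi(\bx))$, these are the same thing, and the resulting constants $c_1,c_2$ agree up to an inessential rearrangement.
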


\begin{proof}
Denote $m=|\bx\wedge\by|$, and $\mathbf{I}=\bx|_m$. Denote $x=\pi(\bx)$, $y=\pi(\by)$.

On the one hand, since $T_F(\bx,\by)=n$, we have $f_{x_1\cdots x_{n-1}}(K)\cap f_{y_1\cdots y_{n-1}}(K)\neq\emptyset$ by Lemma \ref{smallest number}, then
$$d(x,y)\leq diam(f_{x_1\cdots x_{n-1}}(K))+diam(f_{y_1\cdots y_{n-1}}(K))\leq \frac{2diam(K)}{B^\ast}(B^\ast)^n.$$

On the other hand, if $n=m+1$, then $f_{\mathbf{I}x_{m+1}}(K)\cap f_{\mathbf{I} y_{m+1}}(K)=\emptyset$, thus
$$d(x,y)\geq dist\left(f_{\mathbf{I} x_{m+1}}(K),f_{\mathbf{I} y_{m+1}}(K)\right)\geq\xi_1(A_\ast)^m>\xi_1(A_\ast)^n.$$
If $n>m+1$, then $f_{\mathbf{I}x_{m+1}}(K)\cap f_{\mathbf{I}y_{m+1}}(K)\neq\emptyset$.
By the SIC, they intersect a single point, denoted by $z$.
Denote $x'=f_\mathbf{I}^{-1}(x),y'=f_\mathbf{I}^{-1}(y)$ and $z'=f_\mathbf{I}^{-1}(z)$.
Then $x'\in f_{x_{m+1}}(K),y'\in f_{y_{m+1}}(K)$ and $f_{x_{m+1}}(K)\cap f_{y_{m+1}}(K)=z'$.
By the ASC, there is a constant $c>0$
such that
\begin{equation}\label{x'y'}
d(x',y')\geq c\max\left\{d(x',z'),d(y',z')\right\}.
\end{equation}

By the Lemma \ref{smallest number}, we have $f_{x_{m+1}\cdots x_{n-1}}(K)\cap f_{y_{m+1}\cdots y_{n-1}}(K)=z'$ and
$f_{x_{m+1}\cdots x_n}(K)\cap f_{y_{m+1}\cdots y_n}(K)=\emptyset$, so either $z'\notin f_{x_{m+1}\cdots x_n}(K)$ or $z'\notin f_{y_{m+1}\cdots y_n}(K)$. Without loss of generality, we assume that $z'\notin f_{x_{m+1}\cdots x_n}(K)$.
Then
$$d(x',z')\geq\xi_2(A_{\ast})^{n-m-1}.$$
By \eqref{x'y'}, we have
$$d(x,y)=d(f_\mathbf{I}(x'),f_\mathbf{I}(y'))\geq(A_\ast)^md(x',y')\geq \frac{c\xi_2}{A_\ast}(A_\ast)^n.$$

Set $c_1=\min\left\{\xi_1, \frac{c\xi_2}{A_\ast}\right\},c_2=\frac{2diam(K)}{B^\ast}$,
we obtain the lemma.
\end{proof}

\begin{proof}[\textbf{The proof of the Theorem \ref{main 1}}]
We show that $\widetilde{\pi}_{K'}\circ\widetilde{\pi}^{-1}_K$ is the bi-H{\"o}lder map from $(K,d_1)$ to $(K',d_2)$.

Pick $x,y\in K$ with $x\neq y$. Denote $n=T_G( \ \widetilde{\pi}^{-1}_K(x), \ \widetilde{\pi}^{-1}_K(y))$.
Let $c_1$, $c_2$, $A_*$ and $B^*$ are the constants in the Lemma \ref{Holderequivalent1} with respect to the IFS $G$.
By \eqref{c1c2}, we have
\begin{equation}\label{c1K'}
c_1(A_*)^n\leq d_2\left( \ \widetilde{\pi}_{K'}\circ\widetilde{\pi}^{-1}_K(x), \widetilde{\pi}_{K'}\circ\widetilde{\pi}^{-1}_K(y)\right)\leq c_2(B^*)^n,
\end{equation}
Since the two topology automatons are the same,
we have $T_F(\widetilde{\pi}^{-1}_K(x), \widetilde{\pi}^{-1}_K(y))=T_G( \widetilde{\pi}^{-1}_K(x), \widetilde{\pi}^{-1}_K(y))=n$.
Combining \eqref{c1K'} and \eqref{c1c2}, we have
\begin{equation}\label{c1K}
C^{-1}d_1(x,y)^{1/s}\leq d_2\left( \ \widetilde{\pi}_{K'}\circ\widetilde{\pi}^{-1}_K(x), \widetilde{\pi}_{K'}\circ\widetilde{\pi}^{-1}_K(y)\right)\leq Cd_1(x,y)^s,
\end{equation}
where $C$, $s$ are positive constants.
\end{proof}

\begin{defi}\label{separationprefix}
Let $\bx=(x_l)_{l=1}^\infty, \by=(y_l)_{l=1}^\infty\in\Sigma_K$ with $\bx\neq\by$. Denote $k=|\bx\wedge\by|$, $\mathbf{I}=\bx|_k$. We define the \textbf{separation prefix} of $(\bx,\by)$, denoted by $S_F(\bx,\by)$, as follows:

$(\textrm{\rmnum{1}})$ \ If $f_{\mathbf{I}x_{k+1}}(K)\cap f_{\mathbf{I}y_{k+1}}(K)=\emptyset$, then we set $S_F(\bx,\by)=(\bx|_{k+1},\by|_{k+1})$.

$(\textrm{\rmnum{2}})$ \ If $f_{\mathbf{I}x_{k+1}}(K)\cap f_{\mathbf{I}y_{k+1}}(K)=\{a\}$ and $a\notin\{\pi(\bx), \pi(\by)\}$, then we set $S_F(\bx,\by)=(\bx|_m,\by|_n)$, where $m,n$ are the smallest integers such that
$a\notin f_{x_1\cdots x_m}(K)$ and $a\notin f_{y_1\cdots y_n}(K)$ respectively.

$(\textrm{\rmnum{3}})$ \ If $f_{\mathbf{I}x_{k+1}}(K)\cap f_{\mathbf{I}y_{k+1}}(K)=\{a\}$ and $a=\pi(\bx)$, then we set $S_F(\bx,\by)=(\bx,\by|_n)$, where $n$ is the smallest integer such that $a\notin f_{y_1\cdots y_n}(K)$.
(If $f_{\mathbf{I}x_{k+1}}(K)\cap f_{\mathbf{I}y_{k+1}}(K)=\{a\}$ and $a=\pi(\by)$, then we set $S_F(\bx,\by)=(\bx|_m,\by)$, where $m$ is the smallest integer such that $a\notin f_{x_1\cdots x_m}(K)$.)
\end{defi}

Let $F=\{f_i\}_{i=1}^N$ be a family of contractive similitudes, and let $r_i$ be the contraction ratios of $f_i$.
For $\boldsymbol{\omega}=\omega_1\cdots \omega_k\in\Sigma^\ast$, we define $r_{\boldsymbol{\omega}}=\Pi_{n=1}^kr_{\omega_n}$, and for $\boldsymbol{\omega}=(\omega_k)_{k=1}^\infty\in\Sigma^\infty$,
define $r_{\boldsymbol{\omega}}=0$. Take $\bx,\by\in\Sigma_K$,
we define a metric-like function $\rho_K$ on $\Sigma_K\times\Sigma_K$ as
$$\rho_K(\bx,\by)=\max\{r_{\boldsymbol{\mu}},r_{\boldsymbol{\nu}}\},$$
where $(\boldsymbol{\mu}, \boldsymbol{\nu})=S_F(\bx,\by)$.
In particular, we define $\rho_K(\bx,\by)=0$ when $\bx=\by$.

We denote $r^\ast=\max_{1\leq i\leq N}\{r_i\}$,
$r_\ast=\min_{1\leq i\leq N}\{r_i\}$.

\begin{lem}\label{Lipschitz}
If all $f_i$ are contractive similitudes, then there is a constant $c_3>0$ such that
\begin{equation}\label{c3}
c_3^{-1}\rho_K(\bx,\by)\leq d(\pi(\bx),\pi(\by))\leq c_3\rho_K(\bx,\by), \quad \text{for any} \ \bx,\by\in\Sigma_K.
\end{equation}
\end{lem}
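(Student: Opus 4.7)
The plan is to reduce to the first level where the two codings diverge via self-similarity and then do a case analysis on the three cases of Definition \ref{separationprefix}. Throughout, let $\mathbf{I} = \bx\wedge\by$, $k = |\mathbf{I}|$, so that $f_\mathbf{I}$ is a similitude of ratio $r_\mathbf{I}$, and
\[
d(\pi(\bx),\pi(\by)) \;=\; r_\mathbf{I}\cdot d\bigl(\pi(\sigma^k\bx),\pi(\sigma^k\by)\bigr),
\]
where the first letters of $\sigma^k\bx$ and $\sigma^k\by$ are $x_{k+1}\ne y_{k+1}$. It will also be convenient to note, whenever $\bx|_j$ extends $\mathbf{I}$, the inequalities $r_\ast \le r_{\bx|_j}/r_{\bx|_{j-1}} \le r^\ast$, which control the ratio between $r_{\bx|_m}$ and $r_{\bx|_{m-1}}$ at a uniform cost.

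In case (i) the cylinders $f_{\bx|_{k+1}}(K)$ and $f_{\by|_{k+1}}(K)$ are disjoint, so $\rho_K(\bx,\by) = \max\{r_{\bx|_{k+1}}, r_{\by|_{k+1}}\}$ lies in $[r_\ast r_\mathbf{I},\, r^\ast r_\mathbf{I}]$. The lower bound uses
$d(\pi(\sigma^k\bx),\pi(\sigma^k\by))\ge \mathrm{dist}(f_{x_{k+1}}(K),f_{y_{k+1}}(K))\ge \xi_1$, and the upper bound uses $d(\pi(\sigma^k\bx),\pi(\sigma^k\by))\le \mathrm{diam}(K)$. Scaling by $r_\mathbf{I}$ gives both sides of \eqref{c3} up to constants depending only on $\xi_1$, $\mathrm{diam}(K)$, $r_\ast$, $r^\ast$.

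In cases (ii) and (iii) the cylinders $f_{\bx|_{k+1}}(K), f_{\by|_{k+1}}(K)$ meet in a single point $a \in C_F$. The central observation is that for any $j$ with $a\in f_{\bx|_j}(K)$, the point $f_{\bx|_j}^{-1}(a)$ lies in $f_{\bx|_j}^{-1}(C_F)\cap K \subset P_F$ by the very definition of the post-critical set. In case (ii), by minimality of $m$, the point $f_{\bx|_{m-1}}^{-1}(a)$ is a post-critical point not in $f_{x_m}(K)$, so by \eqref{xiK2},
\[
d(\pi(\bx),a) \;=\; r_{\bx|_{m-1}}\cdot d\!\left(\pi(\sigma^{m-1}\bx),\, f_{\bx|_{m-1}}^{-1}(a)\right) \;\ge\; \xi_2\, r_{\bx|_{m-1}} \;\ge\; \xi_2\, r_{\bx|_m},
\]
and symmetrically $d(\pi(\by),a)\ge \xi_2\, r_{\by|_n}$. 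The corresponding upper bounds come from $d(\pi(\bx),a)\le \mathrm{diam}(f_{\bx|_{m-1}}(K))\le \mathrm{diam}(K)\, r_{\bx|_m}/r_\ast$, and likewise for $\by$. The upper half of \eqref{c3} now follows from the triangle inequality through $a$. For the lower half, I apply the ASC to the 1-order cylinders $f_{x_{k+1}}(K), f_{y_{k+1}}(K)$ with intersection point $a'=f_\mathbf{I}^{-1}(a)$:
\[
d(\pi(\sigma^k\bx),\pi(\sigma^k\by)) \;\ge\; c\,\max\bigl\{d(\pi(\sigma^k\bx),a'),\, d(\pi(\sigma^k\by),a')\bigr\};
\]
multiplying by $r_\mathbf{I}$ and substituting the bounds just obtained yields $d(\pi(\bx),\pi(\by))\ge c\xi_2\,\rho_K(\bx,\by)$. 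Case (iii) reduces to the same argument, because $\pi(\bx)=a$ makes $d(\pi(\bx),\pi(\by))=d(a,\pi(\by))$ and $\rho_K=r_{\by|_n}$, so only one of the two estimates is needed and the ASC is not invoked.

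Taking $c_3$ to be the maximum of the reciprocals and suprema of the constants produced in each of the three cases (a finite combination of $\xi_1$, $\xi_2$, $c$, $r_\ast$, $r^\ast$, $\mathrm{diam}(K)$) completes the proof. The main bookkeeping obstacle is case (ii): one must verify that $f_{\bx|_{m-1}}^{-1}(a)\in P_F$ (not merely in $K$) so that $\xi_2$ applies, and that the ASC can be lifted from level one to the common-prefix scale via $f_\mathbf{I}^{-1}$; everything else is routine scaling.
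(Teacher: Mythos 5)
Your proof follows the same three-case decomposition and uses the same ingredients ($\xi_1$, $\xi_2$, the ASC, and scaling by $r_{\mathbf{I}}$) as the paper's argument, and the structure is correct. The lower bound in case (ii)/(iii) via ASC applied after pulling back by $f_{\mathbf{I}}^{-1}$ is exactly what the paper's proof implicitly does (and is in fact carried out more carefully here, matching the style of the paper's Lemma~\ref{Holderequivalent1}).

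One small slip: you write that $a\in C_F$ and deduce $f_{\bx|_j}^{-1}(a)\in f_{\bx|_j}^{-1}(C_F)\cap K\subset P_F$. But $a\in C_F$ is false in general when $\mathbf{I}=\bx\wedge\by$ is nonempty; what is true is that $a'=f_{\mathbf{I}}^{-1}(a)\in f_{x_{k+1}}(K)\cap f_{y_{k+1}}(K)\subset C_F$, so $a=f_{\mathbf{I}}(a')$. The conclusion you need is still correct, but the justification should be: writing $f_{\bx|_j}^{-1}(a)=f_{x_{k+1}\cdots x_j}^{-1}(a')$, and noting $a'\in C_F$ together with $a\in f_{\bx|_j}(K)$ (so that $f_{x_{k+1}\cdots x_j}^{-1}(a')\in K$), we get $f_{\bx|_j}^{-1}(a)\in f_{x_{k+1}\cdots x_j}^{-1}(C_F)\cap K\subset P_F$. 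With that correction your argument is complete and the constants you produce (e.g.\ the lower bound $c\,\xi_2\,\rho_K$ in case (ii)) are consistent with, in fact marginally sharper than, the paper's $c\,\xi_2/r^\ast$.
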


\begin{proof}
Pick $\bx=(x_k)_{k=1}^\infty, \by=(y_k)_{k=1}^\infty\in\Sigma_K$.
Denote
$x=\pi(\bx)$, $y=\pi(\by)$, $k=|\bx\wedge\by|$, $\mathbf{I}=\bx|_k$.
According to the definition of the separation prefix of $(\bx,\by)$, we divide the proof into 3 cases.

Case 1: $f_{\mathbf{I}x_{k+1}}(K)\cap f_{\mathbf{I}y_{k+1}}(K)=\emptyset$.

In this case, we have $S_F(\bx,\by)=(\bx|_{k+1},\by|_{k+1})$, so $\rho_K(\bx,\by)=r_\mathbf{I}\max\{r_{x_{k+1}},r_{y_{k+1}}\}$.

Since $x,y\in f_\mathbf{I}(K)$, so we have
\begin{equation}\label{C41}
d(x,y)\leq diam(f_\mathbf{I}(K))\leq diam(K)r_\mathbf{I}\frac{\max\{r_{x_{k+1}},r_{y_{k+1}}\}}{r_\ast} =\frac{diam(K)}{r_\ast}\rho_K(\bx,\by);
\end{equation}
and $f_{\mathbf{I}x_{k+1}}(K)\cap f_{\mathbf{I}y_{k+1}}(K)=\emptyset$ implies that
\begin{equation}\label{C42}
d(x,y)\geq dist\left(f_{\mathbf{I}x_{k+1}}(K),f_{\mathbf{I}y_{k+1}}(K)\right)\geq r_\mathbf{I}\xi_1\geq r_\mathbf{I}\xi_1\max\{r_{x_{k+1}},r_{y_{k+1}}\}
=\xi_1\rho_K(\bx,\by),
\end{equation}
where $\xi_1$ is defined as \eqref{xiK}.

Case 2: $f_{\mathbf{I}x_{k+1}}(K)\cap f_{\mathbf{I}y_{k+1}}(K)=\{a\}$ and $a\notin\{x, y\}$.

In this case, we have $S_F(\bx,\by)=(\bx|_m,\by|_n)$, where $m,n$ are the smallest integer such that
$a\notin f_{x_1\cdots x_m}(K)$ and $a\notin f_{y_1\cdots y_n}(K)$ respectively.
Then $\rho_K(\bx,\by)=\max\{r_{x_1\cdots x_m},r_{y_1\cdots y_n}\}$.

On one hand, we have $f_{x_1\cdots x_{m-1}}(K)\cap f_{y_1\cdots y_{n-1}}(K)=\{a\}$ by the SIC, then
\begin{align}\label{C43}
d(x,y)&\leq diam(f_{x_1\cdots x_{m-1}}(K))+diam(f_{y_1\cdots y_{n-1}}(K)) \notag\\ &\leq
2diam(K)\frac{\max\{r_{x_1\cdots x_m},r_{y_1\cdots y_n}\}}{r_\ast}=\frac{2diam(K)}{r_\ast}\rho_K(\bx,\by).
\end{align}
On the other hand, by the ASC, there is a constant $c>0$ such that
$$d(x,y)\geq c\max\left\{d(x,a),d(y,a)\right\}.$$
Since $a\notin f_{x_1\cdots x_m}(K)$ and $a\notin f_{y_1\cdots y_n}(K)$, we have $d(x,a)\geq \xi_2r_{x_1\cdots x_{m-1}}$ and $d(y,a)\geq \xi_2r_{y_1\cdots y_{n-1}}$.
Hence
\begin{equation}\label{C44}
d(x,y)\geq c\xi_2\max\{r_{x_1\cdots x_{m-1}},r_{y_1\cdots y_{n-1}}\}\geq\frac{c\xi_2}{r^*}\rho_K(\bx,\by),
\end{equation}
where $\xi_2$ is defined as \eqref{xiK2}.

Case 3: $f_{\mathbf{I}x_{k+1}}(K)\cap f_{\mathbf{I}y_{k+1}}(K)=\{a\}$ and $a\in\{x,y\}$.

We only prove the case $a=x$ (the case of $a=y$ is similar).

In this case, we have $S_F(\bx,\by)=(\bx,\by|_n)$, where $n$ is the smallest integer such that $a\notin f_{y_1\cdots y_n}(K)$, and $\rho_K(\bx,\by)=r_{y_1\cdots y_n}$.
Since $x\in f_{y_1\cdots y_{n-1}}(K)$, so
\begin{equation}\label{C45}
d(x,y)\leq diam(f_{y_1\cdots y_{n-1}}(K))\leq\frac{diam(K)}{r_\ast}\rho_K(\bx,\by).
\end{equation}
Since $a\notin f_{y_1\cdots y_n}(K)$, we have
\begin{equation}\label{C46}
d(x,y)=d(a,y)\geq\xi_2r_{y_1\cdots y_{n-1}}\geq\frac{\xi_2}{r^*}\rho_K(\bx,\by),
\end{equation}
where $\xi_2$ is defined as \eqref{xiK2}.

Summing up with \eqref{C41}-\eqref{C46}, set $c_3=\max\left\{\frac{2diam(K)}{r_{\ast}}, \ \frac{1}{\xi_1}, \ \frac{r^*}{c\xi_2}, \ \frac{r^*}{\xi_2}\right\}$, the Lemma holds.
\end{proof}

\begin{lem}\label{partialSigmaK1}
Let $\bx=(x_k)_{k=1}^\infty, \by=(y_k)_{k=1}^\infty\in\Sigma_K$ with $\bx\neq\by$.
Denote $\ell=|\bx\wedge\by|$, $\mathbf{I}=\bx|_\ell$.
Suppose the separation prefix of $(\bx,\by)$ is type $(\textrm{\rmnum{2}})$ in Definition \ref{separationprefix}. Denote $\{a\}=f_{\mathbf{I}x_{\ell+1}}(K)\cap f_{\mathbf{I}y_{\ell+1}}(K)$ and denote $m$, $n$ the smallest integer such that
$a\notin f_{x_1\cdots x_m}(K)$, $a\notin f_{y_1\cdots y_n}(K)$ respectively.
Then $$\{x_k\}_{k=\ell+2}^{m-1}, \ \{y_k\}_{k=\ell+2}^{n-1}\subset\partial\Sigma_K.$$
\end{lem}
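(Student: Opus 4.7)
The plan is, for each $k$ with $\ell+2 \leq k \leq m-1$, to exhibit a post-critical point lying in $f_{x_k}(K)$, since by the definition of $\partial\Sigma_K$ this is exactly what it means for $x_k$ to belong to $\partial\Sigma_K$. To this end I set
\[
a_k := f_{x_1 \cdots x_{k-1}}^{-1}(a) \qquad (\ell+1 \leq k \leq m),
\]
which is a well-defined point of $K$ because $k-1 \leq m-1$ together with the minimality of $m$ forces $a \in f_{x_1 \cdots x_{k-1}}(K)$.

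First I would verify that $a_{\ell+1} \in C_F$. Applying $f_{\mathbf{I}}^{-1}$ to the identity $\{a\} = f_{\mathbf{I}x_{\ell+1}}(K) \cap f_{\mathbf{I}y_{\ell+1}}(K)$ gives $a_{\ell+1} \in f_{x_{\ell+1}}(K) \cap f_{y_{\ell+1}}(K)$, and since $\ell = |\bx \wedge \by|$ forces $x_{\ell+1} \neq y_{\ell+1}$, this intersection is contained in the critical set by definition.

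Next I prove by induction on $k$ that $a_k \in P_F$ for every $\ell+2 \leq k \leq m$. The base case is
\[
a_{\ell+2} = f_{x_{\ell+1}}^{-1}(a_{\ell+1}) \in f_{x_{\ell+1}}^{-1}(C_F) \cap K \subset P_F.
\]
For the inductive step, if $a_k \in P_F$ with $f_{\boldsymbol{\omega}}(a_k) = c \in C_F$ for some $\boldsymbol{\omega} \in \Sigma^\ast$, then $f_{\boldsymbol{\omega}}(f_{x_k}(a_{k+1})) = f_{\boldsymbol{\omega}}(a_k) = c$, and the composition convention $f_{\boldsymbol{\omega}} \circ f_{x_k} = f_{\boldsymbol{\omega} x_k}$ yields $a_{k+1} \in f_{\boldsymbol{\omega} x_k}^{-1}(C_F) \cap K \subset P_F$.

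Finally, for $\ell+2 \leq k \leq m-1$ the inequality $k \leq m-1$ gives $a \in f_{x_1 \cdots x_k}(K)$, equivalently $a_k \in f_{x_k}(K)$. Combined with $a_k \in P_F$ from the induction, this supplies a coding of the post-critical point $a_k$ whose first symbol is $x_k$, proving $x_k \in \partial\Sigma_K$. The argument for $\{y_k\}_{k=\ell+2}^{n-1}$ is entirely symmetric. The only mildly subtle point is the composition bookkeeping $f_{\boldsymbol{\omega}} \circ f_{x_k} = f_{\boldsymbol{\omega} x_k}$ inside the induction step; everything else is a direct unpacking of the definitions of $C_F$ and $P_F$.
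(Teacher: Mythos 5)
Your proof is correct and amounts to the same argument as the paper's: you identify $a_{\ell+1}=f_{\mathbf I}^{-1}(a)$ as a critical point and then pull it back along $f_{x_{\ell+1}},\dots,f_{x_{k-1}}$ to get post-critical points lying in $f_{x_k}(K)$, whereas the paper phrases the identical step symbolically using $\pi_K^{-1}(P_F)=\bigcup_{n\geq1}\sigma^n(\pi_K^{-1}(C_F))$ and shifts of a coding of $f_{\mathbf I}^{-1}(a)$. The only difference is that you spell out the induction explicitly via the defining description $P_F=\bigcup_{\boldsymbol\omega\in\Sigma^*}f_{\boldsymbol\omega}^{-1}(C_F)\cap K$, which is a cosmetic rather than substantive change.
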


\begin{proof}
Recall that $\partial\Sigma_K=\{\bx|_1;\bx\in\pi_K^{-1}(P_F)\}$.
By the assumption $f_{\mathbf{I}x_{\ell+1}}(K)\cap f_{\mathbf{I}y_{\ell+1}}(K)=\{a\}$, we know that $f_\mathbf{I}^{-1}(a)$ is a critical point.
Since $m$, $n$ are the smallest integers such that
$a\notin f_{x_1\cdots x_m}(K)$, $a\notin f_{y_1\cdots y_n}(K)$ respectively,
then words $x_{\ell+1}x_{\ell+2}\cdots x_{m-1}$ and $y_{\ell+1}y_{\ell+2}\cdots y_{n-1}$
are the prefixes of two coding of $f_\mathbf{I}^{-1}(a)$ in $\pi_K^{-1}$, respectively.
Since $\pi_K^{-1}(P_F)=\bigcup\limits_{n\geq1}\sigma^n(\pi_K^{-1}(C_F))$,
then the Lemma holds.
\end{proof}

\begin{coro}\label{partialSigmaK2}
Suppose the separation prefix of $(\bx,\by)$ is type $(\textrm{\rmnum{3}})$ in Definition \ref{separationprefix}. Denote $\{a\}=f_{\mathbf{I}x_{\ell+1}}(K)\cap f_{\mathbf{I}y_{\ell+1}}(K)$ and denote $n$ the smallest integer such that $a\notin f_{y_1\cdots y_n}(K)$ respectively.
Then $$\{x_k\}_{k\geq\ell+2}, \ \{y_k\}_{k=\ell+2}^{n-1}\subset\partial\Sigma_K.$$
\end{coro}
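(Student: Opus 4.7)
The plan is to imitate the proof of Lemma \ref{partialSigmaK1}, inserting one additional observation that handles the full infinite tail of $\bx$ in the type (iii) situation. I begin with the same opening move: since the two one-cylinders $f_{\mathbf{I}x_{\ell+1}}(K)$ and $f_{\mathbf{I}y_{\ell+1}}(K)$ meet at the single point $a$ and correspond to distinct initial letters $x_{\ell+1}\neq y_{\ell+1}$, the pullback $f_\mathbf{I}^{-1}(a)$ lies in two distinct $1$-cylinders of $K$; therefore $f_\mathbf{I}^{-1}(a)\in C_F$ is a critical point.

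For the $\by$-side claim $\{y_k\}_{k=\ell+2}^{n-1}\subset\partial\Sigma_K$, the argument is verbatim that of Lemma \ref{partialSigmaK1}: the word $y_{\ell+1}y_{\ell+2}\cdots y_{n-1}$ is a prefix of some infinite coding $\mathbf{w}\in\pi_K^{-1}(f_\mathbf{I}^{-1}(a))\subset\pi_K^{-1}(C_F)$, and the identity $\pi_K^{-1}(P_F)=\bigcup_{k\geq 1}\sigma^k(\pi_K^{-1}(C_F))$ then forces each letter $y_k$ with $\ell+2\leq k\leq n-1$ into $\partial\Sigma_K$. For the $\bx$-side claim $\{x_k\}_{k\geq\ell+2}\subset\partial\Sigma_K$, the crucial new observation is that the hypothesis $a=\pi(\bx)$ gives $a=f_\mathbf{I}(\pi(\sigma^\ell\bx))$, so $\pi(\sigma^\ell\bx)=f_\mathbf{I}^{-1}(a)\in C_F$. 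Consequently the entire infinite shift $\sigma^\ell(\bx)$ is a genuine coding of the critical point $f_\mathbf{I}^{-1}(a)$, i.e. $\sigma^\ell(\bx)\in\pi_K^{-1}(C_F)$. Shifting $j$ more times and invoking the same identity gives $\sigma^{\ell+j}(\bx)\in\pi_K^{-1}(P_F)$ for every $j\geq 1$; reading off the first coordinate of each shifted sequence yields $x_{\ell+1+j}\in\partial\Sigma_K$, that is, $x_k\in\partial\Sigma_K$ for all $k\geq\ell+2$.

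No step really presents an obstacle; the proof is fundamentally a careful index chase riding on Lemma \ref{partialSigmaK1}. The one subtle point worth flagging is the role of the hypothesis $a=\pi(\bx)$: it is precisely what upgrades $x_{\ell+1}x_{\ell+2}\cdots$ from being merely a prefix of some coding of $f_\mathbf{I}^{-1}(a)$ (as happens on the $\bx$-side in case (ii) of Definition \ref{separationprefix}) to being an actual infinite coding of $f_\mathbf{I}^{-1}(a)$, which is what permits the conclusion to extend indefinitely rather than terminating at a finite index as in Lemma \ref{partialSigmaK1}.
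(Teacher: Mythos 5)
Your proof is correct and follows exactly the route the paper intends: you reuse the argument of Lemma \ref{partialSigmaK1} for the finite block of $\by$, and you supply the one extra step that the paper leaves implicit (which is why it is stated as a corollary), namely that $a=\pi(\bx)$ forces $\pi(\sigma^\ell\bx)=f_\mathbf{I}^{-1}(a)\in C_F$, so $\sigma^\ell(\bx)\in\pi_K^{-1}(C_F)$ and hence $\sigma^{\ell+j}(\bx)\in\pi_K^{-1}(P_F)$ for all $j\geq 1$, giving $x_k\in\partial\Sigma_K$ for every $k\geq\ell+2$. The identification of $a=\pi(\bx)$ (rather than $a=\pi(\by)$) as the relevant sub-case of type (iii) is also the right reading of the stated conclusion.
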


We denote $r'^\ast=\max_{1\leq i\leq N}\{r'_i\}$,
$r'_\ast=\min_{1\leq i\leq N}\{r'_i\}$.

\begin{proof}[\textbf{The proof of the Theorem \ref{main 2}}]
First we prove the first assertion.
Since $F$ and $G$ have the same automaton and the corresponding contraction ratios are the same, so $\rho_K=\rho_{K'}$. By the Lemma \ref{Lipschitz}, we have $\widetilde{\pi}_{K'}\circ\widetilde{\pi}^{-1}_K$ is the bi-Lipschitz map from $(K,d_1)$ to $(K',d_2)$.

Now, we prove the second assertion. We are going to show that $\widetilde{\pi}_{K'}\circ\widetilde{\pi}^{-1}_K$ is the quasisymmetric from $(K,d_1)$ to $(K',d_2)$.

Pick $\bx=(x_k)_{k=1}^\infty, \by=(y_k)_{k=1}^\infty, \bz=(z_k)_{k=1}^\infty\in\Sigma_K, t\in[0,\infty)$. We assume that $\bx,\by,\bz$ are distinct.
According to the Lemma \ref{Lipschitz}, to prove that $\widetilde{\pi}_{K'}\circ\widetilde{\pi}^{-1}_K$ is a quasisymmetric we only need to prove that if $\rho_K(\bx,\by)\leq t\rho_K(\bx,\bz)$, then there exists a homeomorphism $\eta$ of $[0,\infty)$ to itself such that
\begin{equation}\label{quasisymmetric}
\rho_{K'}(\bx,\by)\leq \eta(t)\rho_{K'}(\bx,\bz).
\end{equation}

Next, we show that $$\eta(t)=\max\left\{\frac{t}{r'_\ast r_\ast}, \frac{t(r'_\ast)^{(\log t-\log r_\ast)/\log r^*}}{r_\ast},
\frac{t^s(r'_\ast)^{(\log t-\log r_\ast)/\log r^*}}{(r'_\ast)^2(r_\ast)^s}, \frac{t^s}{(r'_\ast)^3(r_\ast)^{2s}},
\frac{t^s(r'_*)^{\frac{\log t}{\log r^*}}}{(r'_\ast)^{3}(r_\ast)^{2s}}\right\}$$
is the desired homeomorphism.
Each term in the curly brackets of the above equation is of type $at^b$, where $a,b$ are positive constants, so $\eta(t)$ is a homeomorphism.
Denote $\ell=|\bx\wedge\by\wedge\bz|$, and $\mathbf{I}=\bx|_\ell$.

According to the positions of $f_{\mathbf{I}x_{\ell+1}}(K)$, $f_{\mathbf{I}y_{\ell+1}}(K)$ and $f_{\mathbf{I}z_{\ell+1}}(K)$, up to a permutation of $\{x,y,z\}$, we divide the proof into 2 cases, which contains 5 subcases.
Notice that the inequality \eqref{quasisymmetric} contains only $\rho_{K'}(\bx,\by)$ and $\rho_{K'}(\bx,\bz)$, so we do not consider the position relationship for $f_{\mathbf{I}y_{\ell+1}}(K)$ and $f_{\mathbf{I}z_{\ell+1}}(K)$.

Case 1: $f_{\mathbf{I}x_{\ell+1}}(K)\cap f_{\mathbf{I}y_{\ell+1}}(K)=\emptyset$.

Then $S_F(\bx,\by)=S_G(\bx,\by)=(\bx|_{\ell+1},\by|_{\ell+1})$ by the same topology automaton.

Case 1.1: $f_{\mathbf{I}x_{\ell+1}}(K)\cap f_{\mathbf{I}z_{\ell+1}}(K)=\emptyset$.
See Figure \ref{fig4}.

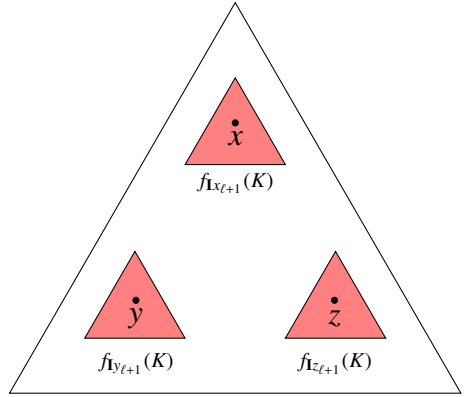
\begin{figure}[h]
\centering
\begin{minipage}{.5\textwidth}
\centering
    \begin{tikzpicture}[xscale=4,yscale=4]
    \draw[shift ={(0,0.25)}, scale =1.5](0,0)--(0.5,-0.866)--(-0.5,-0.866)--cycle;
    \draw[shift ={(0,0)}, scale =0.3333][fill=red!50](0,0)--(0.5,-0.866)--(-0.5,-0.866)--cycle;
    \draw[shift ={(-0.3334,-0.5774)}, scale =0.3333][fill=red!50](0,0)--(0.5,-0.866)--(-0.5,-0.866)--cycle;
    \draw[shift ={(0.3334,-0.5774)}, scale =0.3333][fill=red!50](0,0)--(0.5,-0.866)--(-0.5,-0.866)--cycle;

    \draw(0,-0.2)node{$x$};\draw(0,-0.15)[fill=black]circle (0.01);
    \draw(-0.33,-0.8)node{$y$};\draw(-0.33,-0.74)[fill=black]circle (0.01);
    \draw(0.33,-0.8)node{$z$};\draw(0.33,-0.74)[fill=black]circle (0.01);
    \draw(0,-0.35)node{\tiny $f_{\mathbf{I}x_{\ell+1}}(K)$};
    \draw(-0.33,-0.95)node{\tiny $f_{\mathbf{I}y_{\ell+1}}(K)$};
    \draw(0.33,-0.95)node{\tiny $f_{\mathbf{I}z_{\ell+1}}(K)$};
    \end{tikzpicture}
\end{minipage}
\caption{Case 1.1}
\label{fig4}
\end{figure}

In this case, $S_F(\bx,\bz)=S_G(\bx,\bz)=(\bx|_{\ell+1},\bz|_{\ell+1})$.
According to the definitions of $\rho_K$ and $\rho_{K'}$, we have
\begin{equation}\label{case1a}
\frac{\rho_{K'}(\bx,\by)}{\rho_{K'}(\bx,\bz)}= \frac{r'_\mathbf{I}\max\{r'_{x_{\ell+1}}, r'_{y_{\ell+1}}\}}{r'_\mathbf{I}\max\{r'_{x_{\ell+1}}, r'_{z_{\ell+1}}\}}\leq \frac{1}{r'_*},
\end{equation}
and
\begin{equation}\label{case1b}
\frac{\rho_{K}(\bx,\by)}{\rho_{K}(\bx,\bz)}= \frac{r_\mathbf{I}\max\{r_{x_{\ell+1}}, r_{y_{\ell+1}}\}}{r_\mathbf{I}\max\{r_{x_{\ell+1}}, r_{z_{\ell+1}}\}}\geq r_*.
\end{equation}
Since $\rho_K(\bx,\by)\leq t\rho_K(\bx,\bz)$, then $1\leq\frac{t}{r_*}$ by \eqref{case1b}.
Combining \eqref{case1a}, we have
\begin{equation}\label{tr1}
\rho_{K'}(\bx,\by)\leq \frac{t}{r'_\ast r_\ast}\rho_{K'}(\bx,\bz).
\end{equation}

Case 1.2: $f_{\mathbf{I}x_{\ell+1}}(K)\cap f_{\mathbf{I}z_{\ell+1}}(K)=\{a\}$.
See Figure \ref{fig5}.

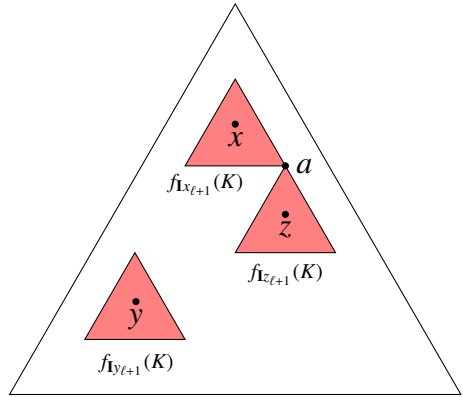
\begin{figure}[h]
\centering
\begin{minipage}{.5\textwidth}
\centering
    \begin{tikzpicture}[xscale=4,yscale=4]
    \draw[shift ={(0,0.25)}, scale =1.5](0,0)--(0.5,-0.866)--(-0.5,-0.866)--cycle;
    \draw[shift ={(0,0)}, scale =0.3333][fill=red!50](0,0)--(0.5,-0.866)--(-0.5,-0.866)--cycle;
    \draw[shift ={(-0.3334,-0.5774)}, scale =0.3333][fill=red!50](0,0)--(0.5,-0.866)--(-0.5,-0.866)--cycle;
    \draw[shift ={(0.1667,-0.2887)}, scale =0.3333][fill=red!50](0,0)--(0.5,-0.866)--(-0.5,-0.866)--cycle;

    \draw(0,-0.2)node{$x$};\draw(0,-0.15)[fill=black]circle (0.01);
    \draw(-0.33,-0.8)node{$y$};\draw(-0.33,-0.74)[fill=black]circle (0.01);
    \draw(0.1667,-0.5)node{$z$};\draw(0.1667,-0.45)[fill=black]circle (0.01);
    \draw(0.23,-0.2887)node{$a$};\draw(0.1667,-0.2887)[fill=black]circle (0.01);
    \draw(-0.1,-0.35)node{\tiny $f_{\mathbf{I}x_{\ell+1}}(K)$};
    \draw(-0.33,-0.95)node{\tiny $f_{\mathbf{I}y_{\ell+1}}(K)$};
    \draw(0.1667,-0.65)node{\tiny $f_{\mathbf{I}z_{\ell+1}}(K)$};
    \end{tikzpicture}
\end{minipage}
\caption{Case 1.2}
\label{fig5}
\end{figure}

Suppose the separation prefix of $(\bx,\bz)$ is of type $(\textrm{\rmnum{2}})$ in Definition \ref{separationprefix}, i.e. $a\notin\{\pi_K(\bx), \pi_K(\bz)\}$. Denote by $m$, $n$ the smallest integer such that
$a\notin f_{x_1\cdots x_m}(K)$, $a\notin f_{z_1\cdots z_n}(K)$ respectively,
then $S_F(\bx,\bz)=S_G(\bx,\bz)=(\bx|_m,\bz|_n)$.
Hence
\begin{equation}\label{case2a}
\frac{\rho_{K'}(\bx,\by)}{\rho_{K'}(\bx,\bz)}= \frac{r'_\mathbf{I}\max\{r'_{x_{\ell+1}}, r'_{y_{\ell+1}}\}}{r'_\mathbf{I}\max\{r'_{x_{\ell+1}\cdots x_m}, r'_{z_{\ell+1}\cdots z_n}\}}\leq \frac{1}{(r'_*)^2}\cdot\frac{1}{\max\{r'_{x_{\ell+2}\cdots x_{m-1}}, r'_{z_{\ell+2}\cdots z_{n-1}}\}},
\end{equation}
and
\begin{equation}\label{case2b}
\frac{\rho_{K}(\bx,\by)}{\rho_{K}(\bx,\bz)}= \frac{r_\mathbf{I}\max\{r_{x_{\ell+1}}, r_{y_{\ell+1}}\}}{r_\mathbf{I}\max\{r_{x_{\ell+1}\cdots x_m}, r_{z_{\ell+1}\cdots z_n}\}}\geq \frac{r_*}{\max\{r_{x_{\ell+2}\cdots x_{m-1}},r_{z_{\ell+2}\cdots z_{n-1}}\}}.
\end{equation}
By the Lemma \ref{partialSigmaK1}, we have $\{x_k\}_{k=\ell+2}^{m-1}, \{z_k\}_{k=\ell+2}^{n-1}\subset\partial\Sigma_K$.
Since $\rho_K(\bx,\by)\leq t\rho_K(\bx,\bz)$ and $r'_i=(r_i)^s$ when $i\in\partial\Sigma_K$, we have
\begin{equation}\label{case2c}
\frac{1}{\max\{r'_{x_{\ell+2}\cdots x_{m-1}},r'_{z_{\ell+2}\cdots z_{n-1}}\}}\leq \left(\frac{t}{r_*}\right)^s.
\end{equation}
Putting \eqref{case2c} into \eqref{case2a}, we have
\begin{equation}\label{tr2}
\rho_{K'}(\bx,\by)\leq \frac{t^s}{(r'_\ast)^2(r_\ast)^s}\rho_{K'}(\bx,\bz).
\end{equation}

Suppose the separation prefix of $(\bx,\bz)$ is of type $(\textrm{\rmnum{3}})$ in Definition \ref{separationprefix}, i.e. $a\in\{\pi_K(\bx), \pi_K(\bz)\}$.
Without loss of generality, we assume that $a=\pi_K(\bx)$.
Denote by $n$ the smallest integer such that $a\notin f_{z_1\cdots z_n}(K)$,
then $S_F(\bx,\bz)=S_G(\bx,\bz)=(\bx,\bz|_n)$. Hence
$\rho_{K}(\bx,\bz)=r_\mathbf{I}\max\{0, r_{z_{\ell+1}\cdots z_n}\}$ and
$\rho_{K'}(\bx,\bz)=r'_\mathbf{I}\max\{0, r'_{z_{\ell+1}\cdots z_n}\}$.
Notice that in the above discussion, if we replace $r'_{x_{\ell+1}\cdots x_m}$, $r'_{x_{\ell+2}\cdots x_{m-1}}$ in \eqref{case2a} with $0$ and $r_{x_{\ell+1}\cdots x_m}$, $r_{x_{\ell+2}\cdots x_{m-1}}$ in \eqref{case2b} with $0$, \eqref{tr2} still holds according to the Corollary \ref{partialSigmaK2}.

Case 1.3: $f_{\mathbf{I}x_{\ell+1}}(K)=f_{\mathbf{I}z_{\ell+1}}(K)$. See Figure \ref{fig6}.

\begin{figure}[h]
\centering
\begin{minipage}{.5\textwidth}
\centering
    \begin{tikzpicture}[xscale=4,yscale=4]
    \draw[shift ={(0,0.29)}, scale =1.5](0,0)--(0.5,-0.866)--(-0.5,-0.866)--cycle;
    \draw[shift ={(0,0)}, scale =0.4][fill=red!50](0,0)--(0.5,-0.866)--(-0.5,-0.866)--cycle;
    \draw[shift ={(-0.3334,-0.5774)}, scale =0.3333][fill=red!50](0,0)--(0.5,-0.866)--(-0.5,-0.866)--cycle;

    \draw(0,-0.18)node{$x$};\draw(0,-0.13)[fill=black]circle (0.01);
    \draw(-0.33,-0.8)node{$y$};\draw(-0.33,-0.74)[fill=black]circle (0.01);
    \draw(0.08,-0.3)node{$z$};\draw(0.08,-0.25)[fill=black]circle (0.01);
    \draw(0,-0.42)node{\tiny $f_{\mathbf{I}x_{\ell+1}}(K)=f_{\mathbf{I}z_{\ell+1}}(K)$};
    \draw(-0.33,-0.95)node{\tiny $f_{\mathbf{I}y_{\ell+1}}(K)$};
    \end{tikzpicture}
\end{minipage}
\caption{Case 1.3}
\label{fig6}
\end{figure}
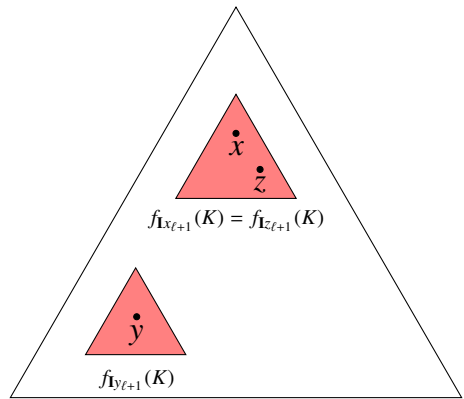

Denote $p=|\bx\wedge\bz|$. Clearly $p>\ell$.

Suppose the separation prefix of $(\bx,\bz)$ is of type $(\textrm{\rmnum{1}})$ in Definition \ref{separationprefix}, i.e. $f_{x_1\cdots x_{p+1}}(K)\cap f_{z_1\cdots z_{p+1}}(K)=\emptyset$.
Then we have $S_F(\bx,\bz)=S_G(\bx,\bz)=(\bx|_{p+1},\bz|_{p+1})$, so $\rho_{K}(\bx,\bz)=r_\mathbf{I}r_{x_{\ell+1}\cdots x_p}\max\{r_{x_{p+1}}, r_{z_{p+1}}\}$ and $\rho_{K'}(\bx,\bz)=r'_\mathbf{I}r'_{x_{\ell+1}\cdots x_p}\max\{r'_{x_{p+1}}, r'_{z_{p+1}}\}$. Hence
\begin{equation}\label{case3a}
\frac{\rho_{K'}(\bx,\by)}{\rho_{K'}(\bx,\bz)}= \frac{r'_\mathbf{I}\max\{r'_{x_{\ell+1}}, r'_{y_{\ell+1}}\}}{r'_\mathbf{I}r'_{x_{\ell+1}\cdots x_p}\max\{r'_{x_{p+1}}, r'_{z_{p+1}}\}}\leq \frac{1}{r'_*r'_{x_{\ell+1}\cdots x_p}}\leq\frac{1}{(r'_*)^{p-\ell+1}}.
\end{equation}
Next, we estimate $p-\ell+1=|\bx\wedge\bz|-|\bx\wedge\by|+1$.
Since $\rho_K(\bx,\by)\leq t\rho_K(\bx,\bz)$, we have
\begin{equation}\label{case3b}
r_\mathbf{I}\max\{r_{x_{\ell+1}}, r_{y_{\ell+1}}\} \leq tr_\mathbf{I}r_{x_{\ell+1}\cdots x_p}\max\{r_{x_{p+1}}, r_{z_{p+1}}\},
\end{equation}
which implies that $r_*\leq t(r^*)^{p-\ell+1}$, hence
\begin{equation}\label{case3c}
p-\ell+1\leq\frac{\log r_*-\log t}{\log r^*}.
\end{equation}
Moreover, by \eqref{case3b} we have $t\geq r_*$.
Putting \eqref{case3c} into \eqref{case3a}, we have
\begin{equation}\label{tr3}
\rho_{K'}(\bx,\by)\leq (r'_\ast)^{(\log t-\log r_\ast)/\log r^*} \rho_{K'}(\bx,\bz) \leq \frac{t(r'_\ast)^{(\log t-\log r_\ast)/\log r^*}}{r_\ast} \rho_{K'}(\bx,\bz).
\end{equation}

Suppose the separation prefix of $(\bx,\bz)$ is of type $(\textrm{\rmnum{2}})$ in Definition \ref{separationprefix}, \ i.e. \
$f_{x_1\cdots x_{p+1}}(K) \ \cap f_{z_1\cdots z_{p+1}}(K)=\{a\}$ and $a\notin\{\pi_K(\bx), \pi_K(\bz)\}$. Denote by $m$, $n$ the smallest integer such that
$a\notin f_{x_1\cdots x_m}(K)$, $a\notin f_{z_1\cdots z_n}(K)$ respectively,
then $S_F(\bx,\bz)=S_G(\bx,\bz)=(\bx|_m,\bz|_n)$. Hence
\begin{equation}\label{case3d}
\frac{\rho_{K'}(\bx,\by)}{\rho_{K'}(\bx,\bz)}= \frac{r'_\mathbf{I}\max\{r'_{x_{\ell+1}}, r'_{y_{\ell+1}}\}}{r'_\mathbf{I}r'_{x_{\ell+1}\cdots x_p}\max\{r'_{x_{p+1}\cdots x_m}, r'_{z_{p+1}\cdots z_n}\}}\leq \frac{1}{(r'_*)^{p-\ell+2}}\cdot
\frac{1}{\max\{r'_{x_{p+2}\cdots x_{m-1}}, r'_{z_{p+2}\cdots z_{n-1}}\}},
\end{equation}
and
\begin{equation}\label{case3e}
\frac{\rho_{K}(\bx,\by)}{\rho_{K}(\bx,\bz)}= \frac{r_\mathbf{I}\max\{r_{x_{\ell+1}}, r_{y_{\ell+1}}\}}{r_\mathbf{I}r_{x_{\ell+1}\cdots x_p}\max\{r_{x_{p+1}\cdots x_m}, r_{z_{p+1}\cdots z_n}\}}\geq \frac{r_*}{\max\{r_{x_{p+2}\cdots x_{m-1}},r_{z_{p+2}\cdots z_{n-1}}\}}.
\end{equation}
By the Lemma \ref{partialSigmaK1}, we have $\{x_k\}_{k=p+2}^{m-1}, \{z_k\}_{k=p+2}^{n-1}\subset\partial\Sigma_K$.
Since $\rho_K(\bx,\by)\leq t\rho_K(\bx,\bz)$, we have
\begin{equation}\label{case3f}
\frac{1}{\max\{r'_{x_{p+2}\cdots x_{m-1}},r'_{z_{p+2}\cdots z_{n-1}}\}}\leq \left(\frac{t}{r_*}\right)^s.
\end{equation}
Next, we estimate $p-\ell$. Since $\rho_K(\bx,\by)\leq t\rho_K(\bx,\bz)$, we have
\begin{equation}\label{case3g}
r_\mathbf{I}\max\{r_{x_{\ell+1}}, r_{y_{\ell+1}}\} \leq tr_\mathbf{I}r_{x_{\ell+1}\cdots x_p}\max\{r_{x_{p+1}\cdots x_m}, r_{z_{p+1}\cdots z_n}\},
\end{equation}
which implies that $r_*\leq t(r^*)^{p-\ell}$, hence
\begin{equation}\label{case3h}
p-\ell\leq\frac{\log r_*-\log t}{\log r^*}.
\end{equation}
Putting \eqref{case3h} and \eqref{case3f} into \eqref{case3d}, we have
\begin{equation}\label{tr4}
\rho_{K'}(\bx,\by)\leq \frac{t^s(r'_\ast)^{(\log t-\log r_\ast)/\log r^*}}{(r'_\ast)^2(r_\ast)^s}\rho_{K'}(\bx,\bz).
\end{equation}

Suppose the separation prefix of $(\bx,\bz)$ is of type $(\textrm{\rmnum{3}})$ in Definition \ref{separationprefix}, \ i.e.  \
$f_{x_1\cdots x_{p+1}}(K) \ \cap f_{z_1\cdots z_{p+1}}(K) \! = \! \{a\}$ and $a\in\{\pi_K(\bx), \pi_K(\bz)\}$.
Without loss of generality, we assume that $a=\pi_K(\bx)$.
Denote by $n$ the smallest integer such that $a\notin f_{z_1\cdots z_n}(K)$,
then $S_F(\bx,\bz)=S_G(\bx,\bz)=(\bx,\bz|_n)$.
Hence
$\rho_{K}(\bx,\bz)=r_\mathbf{I}r_{x_{\ell+1}\cdots x_p}\max\{0, r_{z_{p+1}\cdots z_n}\}$ and
$\rho_{K'}(\bx,\bz)=r'_\mathbf{I}r'_{x_{\ell+1}\cdots x_p}\max\{0, r'_{z_{p+1}\cdots z_n}\}$.
Notice that in the above discussion, if we replace $r'_{x_{p+1}\cdots x_m}$, $r'_{x_{p+2}\cdots x_{m-1}}$ in \eqref{case3d} with $0$ and $r_{x_{p+1}\cdots x_m}$, $r_{x_{p+2}\cdots x_{m-1}}$ in \eqref{case3e}, \eqref{case3g} with $0$, \eqref{tr4} still holds according to the Corollary \ref{partialSigmaK2}.

Case 2: $f_{\mathbf{I}x_{\ell+1}}(K)\cap f_{\mathbf{I}y_{\ell+1}}(K)=\{b\}$.

Case 2.1: $f_{\mathbf{I}x_{\ell+1}}(K)\cap f_{\mathbf{I}z_{\ell+1}}(K)=\{a\}$ (here $a$ can be equal to $b$). See Figure \ref{fig7}.

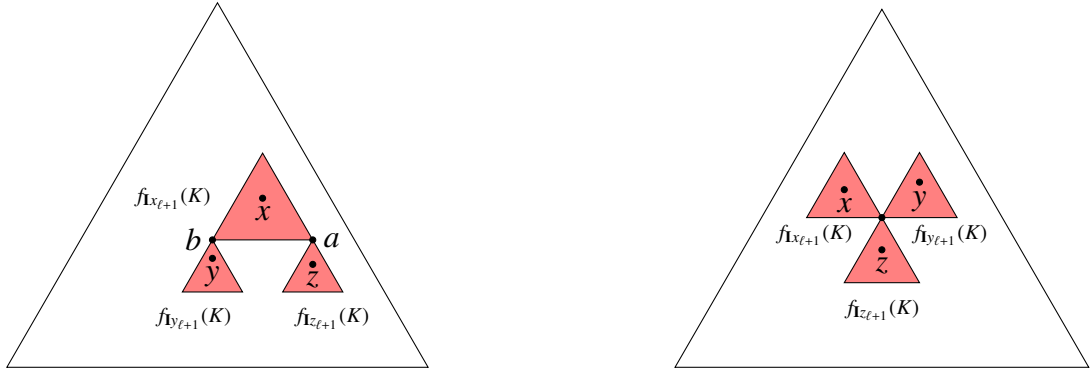
\begin{figure}[h]
\centering
\begin{minipage}[t]{.45\textwidth}
\centering
\begin{tikzpicture}[xscale=4,yscale=4]
    \draw[shift ={(-0.15,0.5)}, scale =1.4](0,0)--(0.5,-0.866)--(-0.5,-0.866)--cycle;
    \draw[shift ={(0,0)}, scale =0.3333][fill=red!50](0,0)--(0.5,-0.866)--(-0.5,-0.866)--cycle;
    \draw[shift ={(-0.1667,-0.2887)}, scale =0.2][fill=red!50](0,0)--(0.5,-0.866)--(-0.5,-0.866)--cycle;
    \draw[shift ={(0.1667,-0.2887)}, scale =0.2][fill=red!50](0,0)--(0.5,-0.866)--(-0.5,-0.866)--cycle;

    \draw(0,-0.2)node{$x$};\draw(0,-0.15)[fill=black]circle (0.01);
    \draw(-0.1667,-0.41)node{$y$};\draw(-0.1667,-0.35)[fill=black]circle (0.01);
    \draw(0.1667,-0.42)node{$z$};\draw(0.1667,-0.37)[fill=black]circle (0.01);
    \draw(0.23,-0.2887)node{$a$};\draw(0.1667,-0.2887)[fill=black]circle (0.01);
    \draw(-0.23,-0.2887)node{$b$};\draw(-0.1667,-0.2887)[fill=black]circle (0.01);
    \draw(-0.3,-0.15)node{\tiny $f_{\mathbf{I}x_{\ell+1}}(K)$};
    \draw(-0.23,-0.55)node{\tiny $f_{\mathbf{I}y_{\ell+1}}(K)$};
    \draw(0.23,-0.55)node{\tiny $f_{\mathbf{I}z_{\ell+1}}(K)$};
    \end{tikzpicture}%
\end{minipage}
\hfill
\begin{minipage}[t]{.48\textwidth}
\centering
\begin{tikzpicture}[xscale=1,yscale=1]
    \draw[shift ={(0,0)}, scale =5.5](0,0)--(0.5,-0.866)--(-0.5,-0.866)--cycle;
    \draw [fill=red!50](0.5,-1.90546)--(1,-2.77146)--(0,-2.77146)--cycle;
    \draw [fill=red!50](-0.5,-1.90546)--(-1,-2.77146)--(0,-2.77146)--cycle;
    \draw [fill=red!50](0,-2.77146)--(0.5,-3.63746)--(-0.5,-3.63746)--cycle;

    \draw(0,-2.77146)[fill=black]circle (0.04);
    \draw(0,-4)node{\tiny $f_{\mathbf{I}z_{\ell+1}}(K)$};
    \draw(0,-3.4)node{$z$};\draw(0,-3.2)[fill=black]circle (0.04);
    \draw(-0.5,-2.6)node{$x$};\draw(-0.5,-2.4)[fill=black]circle (0.04);
    \draw(0.5,-2.55)node{$y$};\draw(0.5,-2.3)[fill=black]circle (0.04);
    \draw(-0.9,-3)node{\tiny $f_{\mathbf{I}x_{\ell+1}}(K)$};
    \draw(0.9,-3)node{\tiny $f_{\mathbf{I}y_{\ell+1}}(K)$};
    \end{tikzpicture}
\end{minipage}%
\caption{Case 2.1}
\label{fig7}
\end{figure}

Suppose the separation prefixes of both $(\bx,\bz)$ and $(\bx,\by)$ are of type $(\textrm{\rmnum{2}})$ in Definition \ref{separationprefix}, i.e. $a\notin\{\pi_K(\bx), \pi_K(\bz)\}$ and $b\notin\{\pi_K(\bx), \pi_K(\by)\}$. Denote by $m$, $n$ the smallest integer such that
$a\notin f_{x_1\cdots x_m}(K)$, $a\notin f_{z_1\cdots z_n}(K)$ respectively, and
denote by $m'$, $n'$ the smallest integer such that
$b\notin f_{x_1\cdots x_{m'}}(K)$, $b\notin f_{y_1\cdots y_{n'}}(K)$ respectively.
Then $S_F(\bx,\bz)=S_G(\bx,\bz)=(\bx|_m,\bz|_n)$ and $S_F(\bx,\by)=S_G(\bx,\by)=(\bx|_{m'},\by|_{n'})$.
Hence
\begin{equation}\label{case4a}
\frac{\rho_{K'}(\bx,\by)}{\rho_{K'}(\bx,\bz)}= \frac{r'_\mathbf{I}\max\{r'_{x_{\ell+1}\cdots x_{m'}}, r'_{y_{\ell+1}\cdots y_{n'}}\}}{r'_\mathbf{I}\max\{r'_{x_{\ell+1}\cdots x_m}, r'_{z_{\ell+1}\cdots z_n}\}}\leq \frac{1}{(r'_*)^2}\cdot\frac{\max\{r'_{x_{\ell+2}\cdots x_{m'-1}}, r'_{y_{\ell+2}\cdots y_{n'-1}}\}}{\max\{r'_{x_{\ell+2}\cdots x_{m-1}}, r'_{z_{\ell+2}\cdots z_{n-1}}\}},
\end{equation}
and
\begin{equation}\label{case4b}
\frac{\rho_{K}(\bx,\by)}{\rho_{K}(\bx,\bz)}= \frac{r_\mathbf{I}\max\{r_{x_{\ell+1}\cdots x_{m'}}, r_{y_{\ell+1}\cdots y_{n'}}\}}{r_\mathbf{I}\max\{r_{x_{\ell+1}\cdots x_m}, r_{z_{\ell+1}\cdots z_n}\}}\geq (r_*)^2\frac{\max\{r_{x_{\ell+2}\cdots x_{m'-1}},r_{y_{\ell+2}\cdots y_{n'-1}}\}}{\max\{r_{x_{\ell+2}\cdots x_{m-1}},r_{z_{\ell+2}\cdots z_{n-1}}\}}.
\end{equation}
By the Lemma \ref{partialSigmaK1}, we have $\{x_k\}_{k=\ell+2}^{m-1}, \{z_k\}_{k=\ell+2}^{n-1}, \{x_k\}_{k=\ell+2}^{m'-1},  \{y_k\}_{k=\ell+2}^{n'-1}\subset\partial\Sigma_K$.
Since $\rho_K(\bx,\by)\leq t\rho_K(\bx,\bz)$, we have
\begin{equation}\label{case4c}
\frac{\max\{r'_{x_{\ell+2}\cdots x_{m'-1}}, r'_{y_{\ell+2}\cdots y_{n'-1}}\}}{\max\{r'_{x_{\ell+2}\cdots x_{m-1}}, r'_{z_{\ell+2}\cdots z_{n-1}}\}}\leq
\left(\frac{t}{(r_*)^2}\right)^s.
\end{equation}
Putting \eqref{case4c} into \eqref{case4a}, we have
\begin{equation}\label{tr5}
\rho_{K'}(\bx,\by)\leq \frac{t^s}{(r'_\ast)^2(r_\ast)^{2s}}\rho_{K'}(\bx,\bz).
\end{equation}

Suppose that at least one of the separation prefix of $(\bx,\bz)$ and $(\bx,\by)$ is of
type $(\textrm{\rmnum{3}})$ in Definition \ref{separationprefix}.
Since $\bx,\by,\bz$ are distinct, without loss of generality, the situations that may occur are (1) $a=\pi_K(\bz)$ and $b\notin\{\pi_K(\bx), \pi_K(\by)\}$; (2) $b=\pi_K(\by)$ and $a\notin\{\pi_K(\bx), \pi_K(\bz)\}$; (3) $a=\pi_K(\bz)$ and $b=\pi_K(\by)$.
We show that the situation (3), situations (1) and (2) are similar.
Denote by $m$, $m'$ the smallest integer such that $a\notin f_{x_1\cdots x_m}(K)$, $b\notin f_{x_1\cdots x_{m'}}(K)$ respectively,
then $S_F(\bx,\by)=S_G(\bx,\by)=(\bx|_{m'},\by)$ and $S_F(\bx,\bz)=S_G(\bx,\bz)=(\bx|_m,\bz)$.
Notice that in the above discussion, if we replace $r'_{y_{\ell+1}\cdots y_{n'}}$, $r'_{y_{\ell+2}\cdots y_{n'-1}}$, $r'_{z_{\ell+1}\cdots z_{n}}$, $r'_{z_{\ell+2}\cdots z_{n-1}}$ in \eqref{case4a} with $0$ and $r_{y_{\ell+1}\cdots y_{n'}}$, $r_{y_{\ell+2}\cdots y_{n'-1}}$, $r_{z_{\ell+1}\cdots z_{n}}$, $r_{z_{\ell+2}\cdots z_{n-1}}$ in \eqref{case4b} with $0$, \eqref{tr5} still holds according to the Corollary \ref{partialSigmaK2}.

Case 2.2: $f_{\mathbf{I}x_{\ell+1}}(K)=f_{\mathbf{I}z_{\ell+1}}(K)$. See Figure \ref{fig8}.

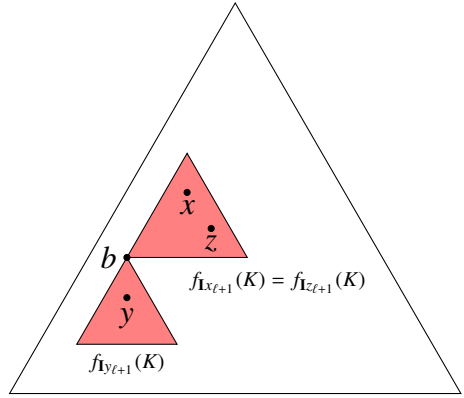
\begin{figure}[h]
\centering
\begin{minipage}{.5\textwidth}
\centering
    \begin{tikzpicture}[xscale=4,yscale=4]
    \draw[shift ={(0.16,0.5)}, scale =1.5](0,0)--(0.5,-0.866)--(-0.5,-0.866)--cycle;
    \draw[shift ={(0,0)}, scale =0.4][fill=red!50](0,0)--(0.5,-0.866)--(-0.5,-0.866)--cycle;
    \draw[shift ={(-0.2,-0.3464)}, scale =0.3333][fill=red!50](0,0)--(0.5,-0.866)--(-0.5,-0.866)--cycle;

    \draw(0,-0.18)node{$x$};\draw(0,-0.13)[fill=black]circle (0.01);
    \draw(-0.26,-0.3464)node{$b$};\draw(-0.2,-0.3464)[fill=black]circle (0.01);
    \draw(-0.2,-0.55)node{$y$};\draw(-0.2,-0.48)[fill=black]circle (0.01);
    \draw(0.08,-0.3)node{$z$};\draw(0.08,-0.25)[fill=black]circle (0.01);
    \draw(0.3,-0.43)node{\tiny $f_{\mathbf{I}x_{\ell+1}}(K)=f_{\mathbf{I}z_{\ell+1}}(K)$};
    \draw(-0.2,-0.7)node{\tiny $f_{\mathbf{I}y_{\ell+1}}(K)$};
    \end{tikzpicture}
\end{minipage}
\caption{Case 2.2}
\label{fig8}
\end{figure}

Denote $p=|\bx\wedge\bz|$. Clearly $p>\ell$.

Suppose the separation prefix of $(\bx,\by)$ is of type $(\textrm{\rmnum{2}})$ and the separation prefix of $(\bx,\bz)$ is of type $(\textrm{\rmnum{1}})$ in Definition \ref{separationprefix}, i.e.
$b\notin\{\pi_K(\bx), \pi_K(\by)\}$ and $f_{x_1\cdots x_{p+1}}(K)\cap f_{z_1\cdots z_{p+1}}(K)=\emptyset$.
Denote by $m'$, $n'$ the smallest integer such that
$b\notin f_{x_1\cdots x_{m'}}(K)$, $b\notin f_{y_1\cdots y_{n'}}(K)$ respectively.
Then $S_F(\bx,\by)=S_G(\bx,\by)=(\bx|_{m'},\by|_{n'})$ and $S_F(\bx,\bz)=S_G(\bx,\bz)=(\bx|_{p+1},\bz|_{p+1})$.
Hence
\begin{equation}\label{case5a}
\frac{\rho_{K'}(\bx,\by)}{\rho_{K'}(\bx,\bz)}= \frac{r'_\mathbf{I}\max\{r'_{x_{\ell+1}\cdots x_{m'}}, r'_{y_{\ell+1}\cdots y_{n'}}\}}{r'_\mathbf{I}r'_{x_{\ell+1}\cdots x_{p}}\max\{r'_{x_{p+1}}, r'_{z_{p+1}}\}}\leq\frac{1}{(r'_*)^2}\cdot\frac{\max\{r'_{x_{\ell+2}\cdots x_{m'-1}}, r'_{y_{\ell+2}\cdots y_{n'-1}}\}}{r'_{x_{\ell+2}\cdots x_p}},
\end{equation}
and
\begin{equation}\label{case5b}
\frac{\rho_{K}(\bx,\by)}{\rho_{K}(\bx,\bz)}= \frac{r_\mathbf{I}\max\{r_{x_{\ell+1}\cdots x_{m'}}, r_{y_{\ell+1}\cdots y_{n'}}\}}{r_\mathbf{I}r_{x_{\ell+1}\cdots x_{p}}\max\{r_{x_{p+1}}, r_{z_{p+1}}\}}\geq (r_*)^2\frac{\max\{r_{x_{\ell+2}\cdots x_{m'-1}},r_{y_{\ell+2}\cdots y_{n'-1}}\}}{r_{x_{\ell+2}\cdots x_{p}}}.
\end{equation}
By the Lemma \ref{partialSigmaK1}, we have $\{x_k\}_{k=\ell+2}^{m'-1},  \{y_k\}_{k=\ell+2}^{n'-1}\subset\partial\Sigma_K$.
If $p<m'$, then $\{x_k\}_{k=\ell+2}^p\subset\partial\Sigma_K$.
Since $\rho_K(\bx,\by)\leq t\rho_K(\bx,\bz)$, we have
\begin{equation}\label{case5c}
\frac{\max\{r'_{x_{\ell+2}\cdots x_{m'-1}}, r'_{y_{\ell+2}\cdots y_{n'-1}}\}}{r'_{x_{\ell+2}\cdots x_p}}\leq
\left(\frac{t}{(r_*)^2}\right)^s.
\end{equation}
Putting \eqref{case5c} into \eqref{case5a}, we have
\begin{equation}\label{tr6}
\rho_{K'}(\bx,\by)\leq \frac{t^s}{(r'_\ast)^2(r_\ast)^{2s}}\rho_{K'}(\bx,\bz).
\end{equation}
If $p\geq m'$, then
\begin{equation}\label{case5d}
\frac{\rho_{K'}(\bx,\by)}{\rho_{K'}(\bx,\bz)}\leq \frac{1}{(r'_*)^2}\cdot \frac{1}{(r'_*)^{p-m'+1}}\cdot \frac{\max\{r'_{x_{\ell+2}\cdots x_{m'-1}}, r'_{y_{\ell+2}\cdots y_{n'-1}}\}}{r'_{x_{\ell+2}\cdots x_{m'-1}}},
\end{equation}
and
\begin{equation}\label{case5e}
\frac{\max\{r'_{x_{\ell+2}\cdots x_{m'-1}}, r'_{y_{\ell+2}\cdots y_{n'-1}}\}}{r'_{x_{\ell+2}\cdots x_{m'-1}}}\leq
\left(\frac{t}{(r_*)^2}\right)^s.
\end{equation}
Next, we estimate $p-m'+1$.
Since $\rho_K(\bx,\by)\leq t\rho_K(\bx,\bz)$, we have
\begin{equation}\label{case5f}
r_\mathbf{I}\max\{r_{x_{\ell+1}\cdots x_{m'}}, r_{y_{\ell+1}\cdots y_{n'}}\}\leq t r_\mathbf{I}r_{x_{\ell+1}\cdots x_{p}}\max\{r_{x_{p+1}}, r_{z_{p+1}}\}.
\end{equation}
which implies that $r_{x_{\ell+1}\cdots x_{m'}}\leq tr_{x_{\ell+1}\cdots x_{m'}}(r^*)^{p-m'+1}$, hence
\begin{equation}\label{case5g}
p-m'+1\leq\frac{-\log t}{\log r^*}.
\end{equation}
Putting \eqref{case5g} and \eqref{case5e} into \eqref{case5d}, we have
\begin{equation}\label{tr7}
\rho_{K'}(\bx,\by)\leq \frac{t^s(r'_*)^{\frac{\log t}{\log r^*}}}{(r'_*)^2(r_\ast)^{2s}} \rho_{K'}(\bx,\bz).
\end{equation}

Suppose the separation prefixes of both $(\bx,\bz)$ and $(\bx,\by)$ are of type $(\textrm{\rmnum{2}})$ in Definition \ref{separationprefix}, i.e. $b\notin\{\pi_K(\bx), \pi_K(\by)\}$ and
$f_{x_1\cdots x_{p+1}}(K)\cap f_{z_1\cdots z_{p+1}}(K)=\{a\}$, $a\notin\{\pi_K(\bx), \pi_K(\bz)\}$.
Denote by $m'$, $n'$ the smallest integer such that
$b\notin f_{x_1\cdots x_{m'}}(K)$, $b\notin f_{y_1\cdots y_{n'}}(K)$ respectively,
and denote by $m$, $n$ the smallest integer such that
$a\notin f_{x_1\cdots x_m}(K)$, $a\notin f_{z_1\cdots z_n}(K)$ respectively.
Then $S_F(\bx,\by)=S_G(\bx,\by)=(\bx|_{m'},\by|_{n'})$, and  $S_F(\bx,\bz)=S_G(\bx,\bz)=(\bx|_m,\bz|_n)$. Hence
\begin{equation}\label{case5h}
\frac{\rho_{K'}(\bx,\by)}{\rho_{K'}(\bx,\bz)}= \frac{r'_\mathbf{I}\max\{r'_{x_{\ell+1}\cdots x_{m'}}, r'_{y_{\ell+1}\cdots y_{n'}}\}}{r'_\mathbf{I}r'_{x_{\ell+1}\cdots x_p}\max\{r'_{x_{p+1}\cdots x_m}, r'_{z_{p+1}\cdots z_n}\}}\leq \frac{1}{(r'_*)^3}\cdot\frac{\max\{r'_{x_{\ell+2}\cdots x_{m'-1}}, r'_{y_{\ell+2}\cdots y_{n'-1}}\}}{r'_{x_{\ell+2}\cdots x_p}\max\{r'_{x_{p+2}\cdots x_{m-1}}, r'_{z_{p+2}\cdots z_{n-1}}\}},
\end{equation}
and
\begin{equation}\label{case5i}
\frac{\rho_{K}(\bx,\by)}{\rho_{K}(\bx,\bz)}= \frac{r_\mathbf{I}\max\{r_{x_{\ell+1}\cdots x_{m'}}, r_{y_{\ell+1}\cdots y_{n'}}\}}{r_\mathbf{I}r_{x_{\ell+1}\cdots x_p}\max\{r_{x_{p+1}\cdots x_m}, r_{z_{p+1}\cdots z_n}\}}\geq (r_*)^2\frac{\max\{r_{x_{\ell+2}\cdots x_{m'-1}},r_{y_{\ell+2}\cdots y_{n'-1}}\}}{r_{x_{\ell+2}\cdots x_p}\max\{r_{x_{p+2}\cdots x_{m-1}},r_{z_{p+2}\cdots z_{n-1}}\}}.
\end{equation}
By the Lemma \ref{partialSigmaK1}, we have $\{x_k\}_{k=\ell+2}^{m-1}, \{z_k\}_{k=\ell+2}^{n-1}, \{x_k\}_{k=\ell+2}^{m'-1},  \{y_k\}_{k=\ell+2}^{n'-1}\subset\partial\Sigma_K$.
If $p<m'$, then $\{x_k\}_{k=\ell+2}^p\subset\partial\Sigma_K$.
Since $\rho_K(\bx,\by)\leq t\rho_K(\bx,\bz)$, we have
\begin{equation}\label{case5j}
\frac{\max\{r'_{x_{\ell+2}\cdots x_{m'-1}}, r'_{y_{\ell+2}\cdots y_{n'-1}}\}}{r'_{x_{\ell+2}\cdots x_p}\max\{r'_{x_{p+2}\cdots x_{m-1}}, r'_{z_{p+2}\cdots z_{n-1}}\}}\leq
\left(\frac{t}{(r_*)^2}\right)^s.
\end{equation}
Putting \eqref{case5j} into \eqref{case5h}, we have
\begin{equation}\label{tr8}
\rho_{K'}(\bx,\by)\leq \frac{t^s}{(r'_\ast)^3(r_\ast)^{2s}}\rho_{K'}(\bx,\bz).
\end{equation}
If $p\geq m'$, then
\begin{equation}\label{case5k}
\frac{\rho_{K'}(\bx,\by)}{\rho_{K'}(\bx,\bz)}\leq \frac{1}{(r'_*)^3}\cdot \frac{1}{(r'_*)^{p-m'+1}}\cdot \frac{\max\{r'_{x_{\ell+2}\cdots x_{m'-1}}, r'_{y_{\ell+2}\cdots y_{n'-1}}\}}{r'_{x_{\ell+2}\cdots x_{m'-1}}\max\{r'_{x_{p+2}\cdots x_{m-1}}, r'_{z_{p+2}\cdots z_{n-1}}\}},
\end{equation}
and
\begin{equation}\label{case5l}
\frac{\max\{r'_{x_{\ell+2}\cdots x_{m'-1}}, r'_{y_{\ell+2}\cdots y_{n'-1}}\}}{r'_{x_{\ell+2}\cdots x_{m'-1}}\max\{r'_{x_{p+2}\cdots x_{m-1}}, r'_{z_{p+2}\cdots z_{n-1}}\}}\leq
\left(\frac{t}{(r_*)^2}\right)^s.
\end{equation}
Next, we estimate $p-m'+1$.
Since $\rho_K(\bx,\by)\leq t\rho_K(\bx,\bz)$, we have
\begin{equation}\label{case5m}
r_\mathbf{I}\max\{r_{x_{\ell+1}\cdots x_{m'}}, r_{y_{\ell+1}\cdots y_{n'}}\}\leq t r_\mathbf{I}r_{x_{\ell+1}\cdots x_{p}}\max\{r_{x_{p+1}\cdots x_m}, r_{z_{p+1}\cdots z_n}\}.
\end{equation}
which implies that $r_{x_{\ell+1}\cdots x_{m'}}\leq tr_{x_{\ell+1}\cdots x_{m'}}(r^*)^{p-m'+1}$, hence
\begin{equation}\label{case5n}
p-m'+1\leq\frac{-\log t}{\log r^*}.
\end{equation}
Putting \eqref{case5n} and \eqref{case5l} into \eqref{case5k}, we have
\begin{equation}\label{tr9}
\rho_{K'}(\bx,\by)\leq \frac{t^s(r'_*)^{\frac{\log t}{\log r^*}}}{(r'_\ast)^{3}(r_\ast)^{2s}} \rho_{K'}(\bx,\bz).
\end{equation}

Suppose the separation prefix of $(\bx,\by)$ is of type $(\textrm{\rmnum{2}})$ and the separation prefix of $(\bx,\bz)$ is of type $(\textrm{\rmnum{3}})$ in Definition \ref{separationprefix}, i.e.
$b\notin\{\pi_K(\bx), \pi_K(\by)\}$ and $f_{x_1\cdots x_{p+1}}(K)\cap f_{z_1\cdots z_{p+1}}(K)=\{a\}$, $a\in\{\pi_K(\bx), \pi_K(\bz)\}$.
Without loss of generality, we assume that $a=\pi_K(\bx)$.
Denote by $m'$, $n'$ the smallest integer such that
$b\notin f_{x_1\cdots x_{m'}}(K)$, $b\notin f_{y_1\cdots y_{n'}}(K)$ respectively,
and denote by $n$ the smallest integer such that
$a\notin f_{z_1\cdots z_n}(K)$ respectively.
Hence
$\rho_{K}(\bx,\bz)=r_\mathbf{I}r_{x_{\ell+1}\cdots x_p}\max\{0, r_{z_{p+1}\cdots z_n}\}$ and
$\rho_{K'}(\bx,\bz)=r'_\mathbf{I}r'_{x_{\ell+1}\cdots x_p}\max\{0, r'_{z_{p+1}\cdots z_n}\}$.
Notice that in the above discussion, if we replace $r'_{x_{p+1}\cdots x_m}$, $r'_{x_{p+2}\cdots x_{m-1}}$ in \eqref{case5h} with $0$ and $r_{x_{p+1}\cdots x_m}$, $r_{x_{p+2}\cdots x_{m-1}}$ in \eqref{case5i} with $0$, \eqref{tr8} and \eqref{tr9} still holds according to the Corollary \ref{partialSigmaK2}.

Suppose the separation prefix of $(\bx,\by)$ is of type $(\textrm{\rmnum{3}})$ and the separation prefix of $(\bx,\bz)$ is of type $(\textrm{\rmnum{1}})$ in Definition \ref{separationprefix}, i.e. $b\in\{\pi_K(\bx), \pi_K(\by)\}$ and $f_{x_1\cdots x_{p+1}}(K)\cap f_{z_1\cdots z_{p+1}}(K)=\emptyset$. Then $S_F(\bx,\bz)=S_G(\bx,\bz)=(\bx|_{p+1},\bz|_{p+1})$.

If $b=\pi_K(\by)$, denote by $m'$ the smallest integer such that
$b\notin f_{x_1\cdots x_{m'}}(K)$. Then $S_F(\bx,\by)=S_G(\bx,\by)=(\bx|_{m'},\by)$.
Hence
$\rho_{K}(\bx,\by)=r_\mathbf{I}\max\{r_{x_{\ell+1}\cdots x_{m'}}, 0\}$ and
$\rho_{K'}(\bx,\by)=r'_\mathbf{I}\max\{r'_{x_{\ell+1}\cdots x_{m'}}, 0\}$.
If we replace $r'_{y_{\ell+1}\cdots y_{n'}}$, $r'_{y_{p+2}\cdots y_{n'-1}}$ in \eqref{case5a} with $0$ and $r_{y_{\ell+1}\cdots y_{n'}}$, $r_{y_{p+2}\cdots y_{n'-1}}$ in \eqref{case5b} with $0$, \eqref{tr6} and \eqref{tr7} still holds according to the Corollary \ref{partialSigmaK2}.

If $b=\pi_K(\bx)$, denote by $n'$ the smallest integer such that
$b\notin f_{y_1\cdots y_{n'}}(K)$.
Then $S_F(\bx,\by)=S_G(\bx,\by)=(\bx,\by|_{n'})$. Hence
\begin{equation}\label{case5u}
\frac{\rho_{K'}(\bx,\by)}{\rho_{K'}(\bx,\bz)}= \frac{r'_\mathbf{I} r'_{y_{\ell+1}\cdots y_{n'}}}{r'_\mathbf{I}r'_{x_{\ell+1}\cdots x_{p}}\max\{r'_{x_{p+1}}, r'_{z_{p+1}}\}}\leq\frac{1}{(r'_*)^2}\cdot\frac{ r'_{y_{\ell+2}\cdots y_{n'-1}}}{r'_{x_{\ell+2}\cdots x_p}},
\end{equation}
and
\begin{equation}\label{case5v}
\frac{\rho_{K}(\bx,\by)}{\rho_{K}(\bx,\bz)}= \frac{r_\mathbf{I} r_{y_{\ell+1}\cdots y_{n'}}}{r_\mathbf{I}r_{x_{\ell+1}\cdots x_{p}}\max\{r_{x_{p+1}}, r_{z_{p+1}}\}}\geq (r_*)^2\frac{r_{y_{\ell+2}\cdots y_{n'-1}}}{r_{x_{\ell+2}\cdots x_{p}}}.
\end{equation}
By the Corollary \ref{partialSigmaK2}, we have $\{x_k\}_{k\geq\ell+2}$, $\{y_k\}_{k=\ell+2}^{n'-1}\subset\partial\Sigma_K$.
Since $\rho_K(\bx,\by)\leq t\rho_K(\bx,\bz)$, we have
\begin{equation}\label{case5w}
\frac{r'_{y_{\ell+2}\cdots y_{n'-1}}}{r'_{x_{\ell+2}\cdots x_p}}\leq
\left(\frac{t}{(r_*)^2}\right)^s.
\end{equation}
Putting \eqref{case5w} into \eqref{case5u}, we have \eqref{tr6} still holds.

Suppose the separation prefix of $(\bx,\by)$ is of type $(\textrm{\rmnum{3}})$ and the separation prefix of $(\bx,\bz)$ is of type $(\textrm{\rmnum{2}})$ in Definition \ref{separationprefix}, i.e. $b\in\{\pi_K(\bx), \pi_K(\by)\}$ and $f_{x_1\cdots x_{p+1}}(K)\cap f_{z_1\cdots z_{p+1}}(K)=\{a\}$, $a\notin\{\pi_K(\bx), \pi_K(\bz)\}$.
Denote by $m$, $n$ the smallest integer such that
$a\notin f_{x_1\cdots x_m}(K)$, $a\notin f_{z_1\cdots z_n}(K)$ respectively.
Then $S_F(\bx,\bz)=S_G(\bx,\bz)=(\bx|_m,\bz|_n)$.

If $b=\pi_K(\by)$, denote by $m'$ the smallest integer such that
$b\notin f_{x_1\cdots x_{m'}}(K)$. Then $S_F(\bx,\by)=S_G(\bx,\by)=(\bx|_{m'},\by)$.
Hence
$\rho_{K}(\bx,\by)=r_\mathbf{I}\max\{r_{x_{\ell+1}\cdots x_{m'}}, 0\}$ and
$\rho_{K'}(\bx,\by)=r'_\mathbf{I}\max\{r'_{x_{\ell+1}\cdots x_{m'}}, 0\}$.
If we replace $r'_{y_{\ell+1}\cdots y_{n'}}$, $r'_{y_{\ell+2}\cdots y_{n'-1}}$ in \eqref{case5h} with $0$ and $r_{y_{\ell+1}\cdots y_{n'}}$, $r_{y_{\ell+2}\cdots y_{n'-1}}$ in \eqref{case5i} with $0$, \eqref{tr8} and \eqref{tr9} still holds according to the Corollary \ref{partialSigmaK2}.

If $b=\pi_K(\bx)$, denote by $n'$ the smallest integer such that
$b\notin f_{y_1\cdots y_{n'}}(K)$.
Then $S_F(\bx,\by)=S_G(\bx,\by)=(\bx,\by|_{n'})$. Hence
\begin{equation}\label{case5x}
\frac{\rho_{K'}(\bx,\by)}{\rho_{K'}(\bx,\bz)}= \frac{r'_\mathbf{I} r'_{y_{\ell+1}\cdots y_{n'}}}{r'_\mathbf{I}r'_{x_{\ell+1}\cdots x_{p}}\max\{r'_{x_{p+1}\cdots x_m}, r'_{z_{p+1}\cdots z_n}\}}\leq\frac{1}{(r'_*)^3}\cdot\frac{ r'_{y_{\ell+2}\cdots y_{n'-1}}}{r'_{x_{\ell+2}\cdots x_p}\max\{r'_{x_{p+2}\cdots x_{m-1}}, r'_{z_{p+2}\cdots z_{n-1}}\}},
\end{equation}
and
\begin{equation}\label{case5y}
\frac{\rho_{K}(\bx,\by)}{\rho_{K}(\bx,\bz)}= \frac{r_\mathbf{I} r_{y_{\ell+1}\cdots y_{n'}}}{r_\mathbf{I}r_{x_{\ell+1}\cdots x_{p}}\max\{r'_{x_{p+1}\cdots x_m}, r'_{z_{p+1}\cdots z_n}\}}\geq (r_*)^2\frac{r_{y_{\ell+2}\cdots y_{n'-1}}}{r_{x_{\ell+2}\cdots x_{p}}\max\{r_{x_{p+2}\cdots x_{m-1}}, r_{z_{p+2}\cdots z_{n-1}}\}}.
\end{equation}
By the Corollary \ref{partialSigmaK2}, we have $\{x_k\}_{k\geq\ell+2}$, $\{y_k\}_{k=\ell+2}^{n'-1}\subset\partial\Sigma_K$.
By the Lemma \ref{partialSigmaK1}, we have $\{z_k\}_{k=p+2}^{n-1}$.
Since $\rho_K(\bx,\by)\leq t\rho_K(\bx,\bz)$, we have
\begin{equation}\label{case5z}
\frac{ r'_{y_{\ell+2}\cdots y_{n'-1}}}{r'_{x_{\ell+2}\cdots x_p}\max\{r'_{x_{p+2}\cdots x_{m-1}}, r'_{z_{p+2}\cdots z_{n-1}}\}}\leq
\left(\frac{t}{(r_*)^2}\right)^s.
\end{equation}
Putting \eqref{case5z} into \eqref{case5x}, we have \eqref{tr8} still holds.

Suppose the separation prefixes of both $(\bx,\bz)$ and $(\bx,\by)$ are of
type $(\textrm{\rmnum{3}})$ in Definition \ref{separationprefix}, i.e. $b\in\{\pi_K(\bx), \pi_K(\by)\}$ and $f_{x_1\cdots x_{p+1}}(K)\cap f_{z_1\cdots z_{p+1}}(K)=\{a\}$, $a\in\{\pi_K(\bx), \pi_K(\bz)\}$.
Since $\bx,\by,\bz$ are distinct, the situations that may occur are (1) $b=\pi_K(\by)$ and $a=\pi_K(\bx)$ or $a=\pi_K(\bz)$; (2) $b=\pi_K(\bx)$ and $a=\pi_K(\bz)$.

In situation (1), without loss of generality, we assume that $a=\pi_K(\bx)$.
Denote by $m'$ the smallest integer such that $b\notin f_{x_1\cdots x_{m'}}(K)$, and
denote by $n$ the smallest integer such that $a\notin f_{z_1\cdots z_{n}}(K)$.
Then $S_F(\bx,\by)=S_G(\bx,\by)=(\bx|_{m'},\by)$ and $S_F(\bx,\bz)=S_G(\bx,\bz)=(\bx,\bz|_{n})$.
Hence
$\rho_{K}(\bx,\by)=r_\mathbf{I}\max\{r_{x_{\ell+1}\cdots x_{m'}}, 0\}$,
$\rho_{K'}(\bx,\by)=r'_\mathbf{I}\max\{r'_{x_{\ell+1}\cdots x_{m'}}, 0\}$ and
$\rho_{K}(\bx,\bz)=r_\mathbf{I}r_{x_{\ell+1}\cdots x_p}\max\{0, r_{z_{p+1}\cdots z_n}\}$,
$\rho_{K'}(\bx,\bz)=r'_\mathbf{I}r'_{x_{\ell+1}\cdots x_p}\max\{0, r'_{z_{p+1}\cdots z_n}\}$.
If we replace $r'_{y_{\ell+1}\cdots y_{n'}}$, $r'_{y_{\ell+2}\cdots y_{n'-1}}$,
$r'_{x_{p+1}\cdots x_{m}}$, $r'_{x_{p+2}\cdots x_{m-1}}$ in \eqref{case5h} with $0$ and $r_{y_{\ell+1}\cdots y_{n'}}$, $r_{y_{p+2}\cdots y_{n'-1}}$,
$r_{x_{p+1}\cdots x_{m}}$, $r_{x_{p+2}\cdots x_{m-1}}$ in \eqref{case5i} with $0$, \eqref{tr8} and \eqref{tr9} still holds according to the Corollary \ref{partialSigmaK2}.

We show that the situation (2).
Denote by $n'$ the smallest integer such that $b\notin f_{y_1\cdots y_{n'}}(K)$, and
denote by $m$ the smallest integer such that $a\notin f_{x_1\cdots x_m}(K)$.
Then $S_F(\bx,\bz)=S_G(\bx,\bz)=(\bx|_m,\bz)$.
Hence
$\rho_{K}(\bx,\bz)=r_\mathbf{I}r_{x_{\ell+1}\cdots x_p}\max\{r_{x_{p+1}\cdots x_m}, 0\}$ and
$\rho_{K'}(\bx,\bz)=r'_\mathbf{I}r'_{x_{\ell+1}\cdots x_p}\max\{r'_{x_{p+1}\cdots x_m}, 0\}$.
If we replace
$r'_{z_{p+1}\cdots z_n}$, $r'_{z_{p+2}\cdots z_{n-1}}$ in \eqref{case5x} with $0$ and
$r_{z_{p+1}\cdots z_n}$, $r_{z_{p+2}\cdots z_{n-1}}$ in \eqref{case5y} with $0$, then \eqref{tr8} still holds according to the Corollary \ref{partialSigmaK2}.
\end{proof}

\section{\textbf{The Proof of the Theorem \ref{main 4}}}\label{3}
Firstly, we define the primary arcs of a self-similar dendrite.

\begin{prop}\label{dendrite proposition}(see \cite{Charatonik_1998})
Every dendrite is uniquely arcwise connected, and every subcontinuum of a dendrite is a dendrite.
\end{prop}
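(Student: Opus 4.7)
The plan is to prove the two assertions separately, using standard continuum-theoretic tools. For the first assertion, I would invoke the Mazurkiewicz--Moore theorem, which states that every locally connected metric continuum is arcwise connected; since a dendrite is by definition a locally connected continuum, this immediately gives the existence of an arc between any two points. For uniqueness, suppose two distinct arcs $\alpha,\beta$ in $X$ join a pair of points $x,y$, parametrized by $[0,1]$ with matching endpoints. Let $t_0=\sup\{t:\alpha|_{[0,t]}=\beta|_{[0,t]}\}$, so $\alpha$ and $\beta$ agree on $[0,t_0]$ and diverge immediately after. Let $t_1=\inf\{t>t_0:\alpha(t)\in\beta((t_0,1])\}$; this infimum is attained because $\beta([t_0,1])$ is closed. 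Splicing the sub-arc of $\alpha$ from $t_0$ to $t_1$ with the matching sub-arc of $\beta$ produces a simple closed curve in $X$, contradicting the hypothesis.

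For the second assertion, let $A\subseteq X$ be a subcontinuum. Certainly $A$ inherits the absence of simple closed curves, so the task is to verify local connectedness. The central lemma is \emph{arc-convexity}: for any $p,q\in A$, the unique arc $[p,q]_X$ is contained in $A$. To see this, I would use the standard fact that in a dendrite every non-endpoint is a cut point; if there were $z\in[p,q]_X\setminus A$, then $X\setminus\{z\}$ would separate $p$ from $q$, yet $A$ is connected and avoids $z$, a contradiction. Given arc-convexity, to check local connectedness at $p\in A$, I would fix $\varepsilon>0$, use local connectedness of $X$ to find a small connected open neighborhood $V$ of $p$ in $X$ of diameter less than $\varepsilon$, and argue that $V\cap A$ is connected: for each $q\in V\cap A$ the arc $[p,q]_X$ lies in $A$ by arc-convexity, and a short refinement ensures it lies in $V$, so $V\cap A$ is arcwise connected.

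The main obstacle is the last step: making rigorous that, within a sufficiently small connected open neighborhood $V\subset X$ of $p$, the arcs from $p$ to other points of $V\cap X$ remain inside $V$. This ``small arcs'' property---arcs between nearby points of a dendrite are themselves small in diameter---is essentially equivalent to local connectedness of $X$ combined with uniqueness of arcs, but formalizing it requires a quantitative refinement that is the delicate core of the argument. A clean alternative is to invoke the Whyburn-style characterization of dendrites as hereditarily locally connected continua without simple closed curves, which makes the second assertion immediate but transfers the difficulty into proving that characterization.
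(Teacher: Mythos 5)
The paper does not prove this proposition: it is stated with the citation ``(see \cite{Charatonik_1998})'' and used as a known continuum-theoretic fact, so there is no in-paper argument to compare against. Your sketch follows the standard route from continuum theory and is broadly sound, but two points deserve comment.

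For uniqueness, the step where you set $t_1=\inf\{t>t_0:\alpha(t)\in\beta((t_0,1])\}$ and claim the infimum is attained ``because $\beta([t_0,1])$ is closed'' is not quite right as stated: $\beta((t_0,1])$ is not closed, and it is not automatic that $t_1>t_0$ (one has to rule out the two arcs re-meeting arbitrarily soon after they diverge). A cleaner variant avoids this: pick $p\in\alpha\setminus\beta$ with $p=\alpha(s)$, set $a=\sup\{t<s:\alpha(t)\in\beta\}$ and $b=\inf\{t>s:\alpha(t)\in\beta\}$; both are attained since $\beta$ is compact, $a<s<b$, and $\alpha|_{[a,b]}$ meets $\beta$ only at its endpoints, so splicing with the sub-arc of $\beta$ between $\alpha(a)$ and $\alpha(b)$ gives the forbidden simple closed curve. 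Also, comparing $\alpha|_{[0,t]}$ and $\beta|_{[0,t]}$ as maps tacitly assumes a common parametrization of the shared initial segment; one should first reparametrize so that this is legitimate.

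For the second assertion, your arc-convexity argument is correct, and the gap you yourself flag---that an arc between two nearby points of a dendrite is itself small in diameter---is real but standard: every Peano continuum is uniformly locally arcwise connected, so for every $\varepsilon>0$ there is $\delta>0$ such that any two points at distance $<\delta$ are joined by \emph{some} arc of diameter $<\varepsilon$; in a dendrite that arc is necessarily \emph{the} arc. With that fact in hand your outline closes, and it matches what one finds in Charatonik--Charatonik or Whyburn. So the proposal is essentially correct once these technical points are tightened; it is simply doing work the paper delegates to the cited reference.
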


Let $F=\{f_i\}_{i=1}^N$ be a self-similar p.c.f. IFS with dendrite attractor $K$.
For any $u, v\in P$ with $u\neq v$, we denote by $\gamma_{uv}$ the arc in $K$ connecting $u$ and $v$, and we call these arcs the \emph{main arcs} of $K$.
The union of all main arcs $$\Gamma_0=\underset{u,v\in P,u\neq v}{\bigcup}\gamma_{uv}$$ is called the \emph{main tree} of $K$.
Clearly, $\Gamma_0$ is a subcontinuum of $K$.
C. Bandt\cite{Bandt_1990} proved in a more general situation then p.c.f. fractals that the
main arcs are the attractors of a graph-directed system of similarities.

Let $X$ be a dendrite and let $a$ be a point in $X$.
The \emph{order} of $a$ (with respect to $X$) is defined to be the number of the connected components of the set $X\backslash\{a\}$, see \cite{Tetenov_2017}.
Recall that any point with at least order $3$ is called a \emph{ramification point} of $X$, and the set of all ramification points of $X$ will be denoted by $RP(X)$.

For the self-similar dendrite $K$, we call all connected components of the set $\Gamma_0\backslash RP(\Gamma_0)$ the \emph{primary arcs} of $K$.

Secondly, we prove the primary arcs of $K$ are the components of the attractor of a graph-directed system of similarities.

Let us recall the definition of graph-directed sets. Let $(V,\Gamma)$ be a directed graph with vertex set $V=\{1,2,\ldots,q\}$ and directed-edge set $\Gamma$.
A pair of vertices may be joined by several edges and we also allow edges starting and ending at the same vertex. The set of edges from vertex $i$ to $j$ is denoted by $\Gamma_{i,j}$.
For each edge $e\in\Gamma$, let $F_e:\mathbb{R}^d\rightarrow\mathbb{R}^d$ be a contracting similarity of ratio $r_e$ with $0<r_e<1$.
Then, there is a unique family of non-empty compact sets $E_1,\ldots,E_q$ such that \begin{equation}\label{graph-directed sets 1}
E_i=\bigcup_{j=1}^{q}\bigcup_{e\in\Gamma_{i,j}}F_e(E_j),\quad 1\leq i\leq q.\end{equation}
The sets $E_1,\ldots,E_q$ are called the graph-directed sets.

\begin{defi}
Let $\gamma$ be an arc in $K$. We call $\gamma_1+\cdots +\gamma_k$
the \textbf{canonical decomposition} of $\gamma$ if each $\gamma_j$ $(1\leq j\leq k)$ belong to a single cylinder, and
it is the maximal subarc enjoying this property.
\end{defi}

Clearly the head (also the tail) of each $\gamma_j$ has at least two codings if it is neither the head nor the tail of $\gamma$.

\begin{lem}\label{graph-directed system}
Let $\{f_i\}_{i=1}^N$ be a self-similar p.c.f. IFS with dendrite attractor $K$, and let $V$ be the set of primary arcs of $K$.
For any $v\in V$, we have
\begin{equation}\label{eq:GIFS}
v=\sum_{j=1}^{T_{v}} \phi_{v,j}(u_{v,j}),
\end{equation}
where $\phi_{v,j}$ are taking from $\{f_i\}_{i=1}^N$ and $u_{v,j}\in V$.
\end{lem}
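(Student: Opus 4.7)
The plan is to build the graph-directed relation by decomposing $v$ according to the $1$-cylinder structure, pulling each piece back through the corresponding $f_{i_j}^{-1}$, and then refining inside the main tree $\Gamma_0$. Since $v$ is an arc in the compact set $K=\bigcup_i f_i(K)$ and distinct $1$-cylinders meet in at most one critical point (SIC), $v$ admits a unique canonical decomposition
$$v=\gamma_1+\gamma_2+\cdots+\gamma_k,$$
where each $\gamma_j\subset f_{i_j}(K)$ is the maximal sub-arc of $v$ sitting inside a single $1$-cylinder, $i_j\ne i_{j+1}$, and each joining point $\gamma_j\cap\gamma_{j+1}$ is a critical point in $C$.

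Next, I set $\gamma'_j:=f_{i_j}^{-1}(\gamma_j)$. By Proposition~\ref{dendrite proposition} each $\gamma'_j$ is an arc in the dendrite $K$, and the crux of the argument is to show that $\gamma'_j\subset\Gamma_0$ with both endpoints lying in the vertex set $V(\Gamma_0):=P\cup RP(\Gamma_0)$. Every interior joining endpoint is a critical point and so lifts into $f_{i_j}^{-1}(C)\cap K\subset P$ by definition of the post-critical set. The two outer endpoints of $v$ already lie in $V(\Gamma_0)$ since $v$ is primary; a post-critical endpoint lifts directly to $P$, and a ramification endpoint $a\in RP(\Gamma_0)$ is handled by a local dichotomy---either some branch of $\Gamma_0$ at $a$ exits $f_{i_j}(K)$, forcing $a$ onto a cylinder boundary and hence into $C$ (so its lift is post-critical), or all branches of $\Gamma_0$ at $a$ are contained in $f_{i_j}(K)$, in which case the homeomorphism $f_{i_j}^{-1}$ transports the branching faithfully to a ramification point of $\Gamma_0$. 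Once both endpoints of $\gamma'_j$ are known to lie in the subdendrite $\Gamma_0$, the uniqueness of arcs in the dendrite $K$ (Proposition~\ref{dendrite proposition}) forces $\gamma'_j\subset\Gamma_0$.

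Finally, since $\pi^{-1}(P)$ is finite (p.c.f.\ hypothesis), $\Gamma_0$ is a finite topological tree, so each $\gamma'_j\subset\Gamma_0$ with vertex endpoints breaks canonically into a finite concatenation of primary arcs $\gamma'_j=u_{j,1}+\cdots+u_{j,m_j}$. Applying $f_{i_j}$ and concatenating over $j$ produces
$$v=\sum_{j=1}^{k}\sum_{l=1}^{m_j}f_{i_j}(u_{j,l}),$$
which, upon relabeling $\phi_{v,n}=f_{i_j}$, $u_{v,n}=u_{j,l}$, and $T_v=\sum_j m_j$, is the asserted expression \eqref{eq:GIFS}. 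I expect the main obstacle to be the ramification-endpoint dichotomy in the middle step: one must verify rigorously that the self-similar structure of $\Gamma_0$ inside each cylinder is rich enough that $f_{i_j}^{-1}$ really carries branching of $\Gamma_0\cap f_{i_j}(K)$ to branching of $\Gamma_0$, which presumably calls on the Bandt graph-directed description of main arcs or a direct argument exploiting the ASC.
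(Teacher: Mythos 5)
Your overall scheme matches the paper's: take the canonical decomposition of $v$ into one-cylinder pieces, pull each piece back by the corresponding $f_{n_j}^{-1}$, show the lifted endpoints land in $P^*:=P\cup RP(\Gamma_0)$, invoke uniqueness of arcs to place the lifted arcs inside $\Gamma_0$, and then split them into primary arcs. The interior joining points being critical and lifting into $P$, and the uniqueness-of-arcs step, are both fine. But the step you yourself flag as ``the main obstacle'' --- showing that a ramification endpoint $a$ of $v$ with a unique coding and $a\notin P$ has $a':=f_{n_1}^{-1}(a)\in P^*$ --- is precisely the nontrivial content of the lemma, and you assert rather than prove it. Your ``dichotomy'' also mis-states the first alternative: if a branch of $\Gamma_0$ emanating from $a$ eventually exits $f_{n_1}(K)$, that crossing happens at some critical point possibly far from $a$, and does not force $a$ itself onto the cylinder boundary; the correct split is simply ``$a$ has two codings (or $a\in P$)'' versus ``$a$ is interior to $f_{n_1}(K)$ with a single coding''.

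The paper fills the gap as follows. In the second case, $a$ is a ramification point of $\Gamma_0$, so choose three main arcs $\gamma_{ap_1},\gamma_{ap_2},\gamma_{ap_3}$ with $p_i\in P$ meeting only at $a$, and take the first canonical piece $\gamma_{aq_i}$ of each. Since $a$ is interior to $f_{n_1}(K)$, each $\gamma_{aq_i}\subset f_{n_1}(K)$. Each far endpoint $q_i$ is either $p_i\in P$ or a critical point (a junction of the canonical decomposition), and in both cases $f_{n_1}^{-1}(q_i)\in P$. Hence $f_{n_1}^{-1}(\gamma_{aq_1})\cup f_{n_1}^{-1}(\gamma_{aq_2})$ is the unique arc in $K$ joining two points of $P$, so it lies in $\Gamma_0$; the same applies with $q_3$. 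Thus $a'$ has at least three branches in $\Gamma_0$, so $a'\in RP(\Gamma_0)\subset P^*$. This is exactly the missing content, and it does not require Bandt's graph-directed description or the ASC --- only the p.c.f.\ and dendrite structure you already have. You should incorporate this argument to close the gap; the rest of your write-up then goes through.
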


\begin{proof}
Pick $v\in V$. Denote the head and terminus of $v$ by $a$ and $b$. Let
$$
v=\theta_1+\cdots+\theta_h
$$
be the canonical decomposition of $v$. Assume  that $\theta_j\subset K_{n_j}$, $1\leq j\leq h$.
Denote $a'=f_{n_1}^{-1}(a)$ and $b'=f_{n_h}^{-1}(b)$.
Denote $P^*=P\cup RP(\Gamma_0)$.

First, we show that $a'\in P^*$.
Notice that if $a$ has at least two codings or $a\in P$, then $a'\in P$; so in the following
we assume $a$   has only one coding and $a\not \in P$.
Since $a\not\in P$, we deduce that $a$ is a ramification point in the main tree.
Let $\gamma_{ap_1}$, $\gamma_{ap_2}$ and $\gamma_{ap_3}$ be three arcs in the main tree such
that any two of them only intersect  at $a$ and $p_1,p_2,p_3\in P$.

Let $\gamma_{aq_1}$ be the first arc of the canonical partition of $\gamma_{ap_1}$. Then
$\gamma_{aq_1}\in K_{n_1}$ since $a$ is an inner point of $K_{n_1}$. Moreover, either $q_1=p_1$,
or $q_1$ has at least two codings. In both cases, we have $f_{n_1}^{-1}(q_1)\in P$.
The same holds
for $f_{n_1}^{-1}(\gamma_{aq_2})$ and $f_{n_1}^{-1}(\gamma_{aq_3})$.
It follows that $f_{n_1}^{-1}(\gamma_{aq_1})\cup f_{n_1}^{-1}(\gamma_{aq_2})$
is an arc joining $f_{n_1}^{-1}(q_1)$ and $f_{n_1}^{-1}(q_2)$, two points on the main tree.
It follows that   $a'$ is ramification point of the main tree,  so  $a'\in P^*$.

By the same argument we have $b'\in P^*$.

Next, we prove that  for all $1\leq j\leq h$,
$f_{n_j}^{-1}(\theta_j)$ is a joining of primary arcs.
Denote the head and terminus of $\theta_j$ by $a_j$ and $b_j$. Note that $a_1=a$ and $b_h=b$.
Notice that  $a_j\neq a$ (similarly $b_j\neq b$), then $a_j$ (similarly $b_j$) has two codings, so
$$a'_j=f_{n_j}^{-1}(a_j), \  b'_j=f_{n_j}^{-1} (b_j)\in P^*.$$
Consequently,  there is an arc in the main tree connecting
$a'_j$ and $b'_j$, and this arc must be a joining of primary arcs, say, it is $\tau_1+\cdots+\tau_m$. Then
\begin{equation}\label{eq:joining}
\theta_j=f_{n_j}(\tau_1)+\cdots +f_{n_j}(\tau_m).
\end{equation}
This proves that the primary arcs form a graph-directed system, where the vertex set is $V$,
and all the maps in the system are taking from $\{f_i\}_{i=1}^N$.
\end{proof}

Thirdly, we define a new metric on $K$.

Denote $\Sigma=\{1,\ldots,N\}$. Let $R(f_i)=R(i)$ be a function from $\Sigma$ to $(0,1)$.
We define  a function $L:V\to \R^+$ by
\begin{equation}\label{eq:Lv}
 L(v)=\sum_{j=1}^{T_v}{ R(\phi_{v,j})}.
\end{equation}

Let $X_0=P\cup RP(\Gamma_0)$.
We define a metric-like function $D_0$ on $X_0$ as follows:
For $x,y\in X_0$, we set $D_0(x,y)=h$ where $h$
is the number of primary arcs containing in  the arc joining $x$ and $y$. Especially,
$D_0(x,y)=1$ if $\gamma_{xy}\in V$.

For $n\geq 1$, set
$$
X_n=X_0\cup \bigcup_{I\in\Sigma^n}f_I(X_0) \text{ and }
\Gamma_n=\Gamma_0\cup \bigcup_{I\in\Sigma^n} f_I(\Gamma_0).
$$
Clearly $\Gamma_n$ is a connected subset of $K$, hence it is a dendrite.
Now, we define a metric-like function $D_n$ on $X_n$.
By abusing of notations, for an arc $\tau$ with end points $a$ and $b$,
we will denote $D_n(\tau):=D_n(a,b)$.
Pick $x,y\in X_n$. Let $\tau$ be the arc joining $x$ and $y$ in $\Gamma_n$,
and let
$$\tau=\theta_1+\dots+\theta_k$$
be the canonical decomposition of $\tau$ in $K$. Assume that $\theta_j\subset K_{n_j}$.
We define
\begin{equation}\label{eq:metric}
D_n(x,y)=D_n(\tau)=:\sum_{j=1}^k R(f_{n_j})D_{n-1}(f_{n_j}^{-1}(\theta_j)).
\end{equation}

\begin{theo}\label{metricD}
Let $\{f_i\}_{i=1}^N$ be a self-similar p.c.f. IFS with dendrite attractor $K$.
If for each primary arc $v$ we have $L(v)=1$, then

(i) $D_n$ is a metric on $X_n$, $n\geq 1$;

(ii) $D_n$ coincides with $D_{n-1}$ on $X_{n-1}$. \\
Hence, $D$ also induces a metric on $K$, where $D$ is the completion metric of $D_n$.
\end{theo}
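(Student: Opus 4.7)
The plan is to prove (i) and (ii) simultaneously by induction on $n$, carrying along an auxiliary additivity claim: whenever $\tau = \tau_1 + \tau_2$ is an arc in $\Gamma_n$ with splitting point $w \in X_n$, one has $D_n(\tau) = D_n(\tau_1) + D_n(\tau_2)$. Once additivity is established, the metric axioms reduce to the standard dendrite observation that any three points $x,y,z$ admit a unique branch point $w$ with $\gamma_{xz}=\gamma_{xw}+\gamma_{wz}$, $\gamma_{xy}=\gamma_{xw}+\gamma_{wy}$, $\gamma_{yz}=\gamma_{yw}+\gamma_{wz}$, so that
\[
D_n(x,y)+D_n(y,z)=D_n(x,z)+2D_n(w,y)\geq D_n(x,z).
\]
The only subtlety is that $w$ must lie in $X_n$: either $w\in\{x,y,z\}\subset X_n$ or $w$ is a ramification point of $\Gamma_n$; and $RP(\Gamma_n)\subset X_n$ since ramifications of $\Gamma_n$ come either from inherited ones inside some $f_I(\Gamma_0)$ (in $f_I(RP(\Gamma_0))\subset X_n$) or from post-critical gluings of cylinders (in $P\subset X_n$).

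The base case $n=0$ reduces to combinatorics on the main tree: positivity, symmetry and additivity of $D_0$ follow from the additive counting of primary arcs along paths, and the triangle inequality follows from the branch-point template above. For the inductive step of (ii), take $x,y\in X_{n-1}$; the arc $\tau$ joining them in $\Gamma_n$ coincides with the arc in $\Gamma_{n-1}\subset\Gamma_n$ by uniqueness of arcs in dendrites, and its canonical $1$-cylinder decomposition $\tau=\theta_1+\cdots+\theta_k$ is the same in both formulas. Each endpoint of $\theta_j$ lies in $X_{n-1}$ (it is either $x$, $y$, or a junction of two adjacent cylinders, hence a critical point in $P$). A short check using $\pi^{-1}(P)=\bigcup_{k\geq 1}\sigma^k(\pi^{-1}(C))$ shows that $f_{n_j}^{-1}$ sends these endpoints into $X_{n-2}$, and the induction hypothesis $D_{n-1}=D_{n-2}$ on $X_{n-2}$ gives equality term by term.

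The base case of (ii), $D_1=D_0$ on $X_0$, is where the hypothesis $L(v)=1$ enters. For a primary arc $v$, Lemma \ref{graph-directed system} provides $v=\sum_l\phi_{v,l}(u_{v,l})$ with each $u_{v,l}$ primary; this refines the canonical $1$-cylinder decomposition of $v$ (consecutive pieces lying in the same cylinder merge, but after regrouping the defining sum the contributions match), yielding
\[
D_1(v)=\sum_l R(\phi_{v,l})\,D_0(u_{v,l})=\sum_l R(\phi_{v,l})=L(v)=1=D_0(v).
\]
Additivity at ramification points of $\Gamma_0$ then extends this from single primary arcs to arbitrary arcs joining points of $X_0$. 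The inductive step of (i) reuses the triangle-inequality template, with additivity of $D_n$ proved by splitting at $w$ and pulling back to additivity of $D_{n-1}$ at $f_{n_{j_0}}^{-1}(w)\in X_{n-1}$; a case analysis over $w\in X_n\cap K_{n_{j_0}}$ (using again the p.c.f.\ identity for $\pi^{-1}(P)$) confirms this pullback always lies in $X_{n-1}$.

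The main obstacle I anticipate is the post-critical bookkeeping that bridges consecutive levels: verifying $RP(\Gamma_n)\subset X_n$, that endpoints of canonical pieces $\theta_j$ pull back by $f_{n_j}^{-1}$ into $X_{n-2}$, and that a splitting point $w\in X_n$ interior to some cylinder piece $\theta_{j_0}$ pulls back into $X_{n-1}$ so that additivity propagates. Once these inclusions are in hand, the passage from $(D_n)$ to a metric $D$ on $K$ is routine: $\bigcup_n X_n$ is dense in $K$ (every $x\in K$ with coding $\bx$ is approximated by the vertices $f_{\bx|_n}(P)$), the compatibility in (ii) makes $(D_n)$ a well-defined metric on this dense set, and it extends to $K$ by standard completion.
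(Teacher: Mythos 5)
Your proposal follows essentially the same route as the paper's proof: induction on $n$, establishing additivity of $D_n$ at a splitting point in $X_n$ and then deducing the triangle inequality from the branch-point decomposition, using Lemma~\ref{graph-directed system} together with $L(v)=1$ for the base case $D_1=D_0$ on $X_0$, and propagating $D_n=D_{n-1}$ via the recursion~\eqref{eq:metric}. The only difference is stylistic — you run (i) and (ii) in a single joint induction and spell out explicitly the inclusions the paper leaves implicit (that $RP(\Gamma_n)\subset X_n$, that $\Gamma_{n-1}\subset\Gamma_n$ via the graph-directed structure, and that the endpoints of each canonical piece pull back into $X_{n-2}$) — but the decompositions, key lemma, and line of argument are the same.
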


\begin{proof}
We proof the theorem by induction on $n$.

First we prove the first assertion.
Clearly $(X_0, D_0)$  is a metric space.  Suppose that $D_{n-1}$ is a metric on $X_{n-1}$,
we are going to show that $(X_n, D_n)$ is a metric space.

Pick three distinct points $x,y,z\in X_n$. We only need to verify the triangle inequality
\begin{equation}\label{eq:triangle inequality}
D_n(x,y)+D_n(y,z)\geq D_n(x,z).
\end{equation}

Case 1. $x,y,z$ are located in a same arc in $\Gamma_n$.

Case 1.1. Suppose $y$ is between $x$ and $z$.

Let
$$
\gamma_{xz}=\theta_1+\cdots+\theta_k
$$
be the canonical decomposition of $\gamma_{xz}$.
Assume that $y\in \theta_j$.
If $y$ is an end point of $\theta_j$, then clearly by definition
\begin{equation}\label{eq:equal}
D_n(x,y)+D_n(y,z)=D_n(x,z).
\end{equation}
If $y$ is not an end point of $\theta_j$,
then $y$ divides $\theta_j$ into two subarcs: $\theta_j=\theta_j'+\theta_j''$.
To show \eqref{eq:equal} holds, after cancelation,  we only need to show that
$$R(f)D_{n-1}(f^{-1}(\theta_j))= R(f)D_{n-1}(f^{-1}(\theta_j'))+R(f)D_{n-1}(f^{-1}(\theta_j'')).$$
By induction hypothesis (I), $D_{n-1}(f^{-1}(\theta_j))= D_{n-1}(f^{-1}(\theta_j'))+D_{n-1}(f^{-1}(\theta_j'')).$

Case 1.2. Suppose $z$ is between $x$ and $y$.

Let
$$
\gamma_{xy}=\theta_1+\cdots+\theta_k
$$
be the canonical decomposition of $\gamma_{xy}$.
Assume that $z\in \theta_j$.
If $z$ is an end point of $\theta_j$, then clearly by definition
\begin{equation}\label{eq:inequality}
D_n(x,z)<D_n(x,y).
\end{equation}
If $z$ is not an end point of $\theta_j$,
then $z$ divides $\theta_j$ into two subarcs: $\theta_j=\theta_j'+\theta_j''$.
Thus $\theta_1+\cdots+\theta_{j-1}+\theta_j'$ is the canonical decomposition of $\gamma_{xz}$.
To show \eqref{eq:inequality} holds, after cancelation,  we only need to show that
$$R(f)D_{n-1}(f^{-1}(\theta'_j))< R(f)D_{n-1}(f^{-1}(\theta_j)).$$
By induction hypothesis (I), $D_{n-1}(f^{-1}(\theta'_j))< D_{n-1}(f^{-1}(\theta_j)).$

Case 2. Exists $a\in X_n$ such that $\gamma_{xa}, \gamma_{ya}, \gamma_{za}$ are arcs in $\Gamma_n$ joining at $a$.

By \eqref{eq:equal} in case 1.1,
we have $$D_n(x,a)+D_n(a,y)=D_n(x,y), D_n(y,a)+D_n(a,z)=D_n(y,z) \ \text{and} \ D_n(x,a)+D_n(a,z)=D_n(x,z),$$
thus \eqref{eq:triangle inequality} holds.

Now we prove the second assertion of the theorem.

First we prove that $D_1$ coincides with $D_0$ on $X_0$.
Pick $x,y\in X_0$. Let $\rho$ be the arc joining $x$ and $y$ in $\Gamma$ and let
$$\rho=\theta_1+\dots+\theta_k$$
be the canonical decomposition of $\rho$ in $K$. Assume  that $\theta_j\subset K_{n_j}$, $1\leq j\leq k$.

If $\rho\in V$,  by the proof in Lemma \ref{graph-directed system},
  $f_{n_j}^{-1}(\theta_j)$ is a joining of primary arcs for all $1\leq j\leq k$,
say, it is $\tau_{1}^{(j)}+\cdots+\tau_{m_j}^{(j)}$. Thus
\begin{equation}\label{eq:tau}
\rho=\sum_{j=1}^{k}f_{n_j}(\tau_1^{(j)})+\cdots +f_{n_j}(\tau_{m_j}^{(j)}),\end{equation}
so $$
D_1(x,y)=\sum_{j=1}^k R(f_{n_j})D_0(f_{n_j}^{-1}(\theta_j))
=\sum_{j=1}^k m_jR(f_{n_j})=L(\rho)=1=D_0(x,y).$$

Now assume that $\rho$ is a joining of primary arcs, say, $\rho=\tau_1+\cdots+\tau_h$.
 By \eqref{eq:equal}, we have
 $D_0(\rho)=\sum_{j=1}^h D_0(\tau_j)$ and $D_1(\rho)=\sum_{j=1}^h D_1(\tau_j)$.
 By the above discussion, we have $D_0(\tau_j)=D_1(\tau_j)$, so $D_0(\rho)=D_1(\rho)$.
This proves that $D_1$ coincides with $D_0$ on $X_0$.

Suppose that  $D_{n-1}$ coincides with $D_{n-2}$ on $X_{n-2}$. This together with  \eqref{eq:metric}
imply that  $D_n(x,y)=D_{n-1}(x,y)$ for $x,y\in X_{n-1}$.
\end{proof}

\begin{lem}
Let $F=\{f_i\}_{i=1}^N$ be a self-similar p.c.f. IFS with dendrite attractor $K$.
If for each primary arc $v$ we have $L(v)=1$, then $f_i:(K,D)\to (K,D)$ is a similitude with contraction ratio $R(f_i)$ for any $i\in\{1,\cdots,N\}$.
Moreover, $F$ satisfies the angle separation condition with respect to the metric $D$.
\end{lem}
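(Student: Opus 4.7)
The plan has two parts, corresponding to the two assertions.

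For the similitude property, I would argue by induction on $n$ that for every $x, y \in X_n$ and every $i \in \{1, \ldots, N\}$,
\begin{equation*}
D_{n+1}(f_i(x), f_i(y)) \;=\; R(f_i)\, D_n(x, y).
\end{equation*}
The crux is that, $K$ being a dendrite, $f_i(\gamma_{xy})$ is the unique arc joining $f_i(x)$ and $f_i(y)$ in $K$, and it lies entirely in the single $1$-cylinder $f_i(K) = K_i$. Thus the canonical decomposition of $f_i(\gamma_{xy})$ used in the definition of $D_{n+1}$ consists of the single piece $f_i(\gamma_{xy})$ itself, and the recursive formula \eqref{eq:metric} collapses to
\begin{equation*}
D_{n+1}(f_i(x), f_i(y)) \;=\; R(f_i)\, D_n\bigl(f_i^{-1}(f_i(\gamma_{xy}))\bigr) \;=\; R(f_i)\, D_n(\gamma_{xy}) \;=\; R(f_i)\, D_n(x, y).
\end{equation*}
Combining this with the compatibility statement (ii) of Theorem \ref{metricD}, density of $\bigcup_n X_n$ in $K$, and continuity of both $f_i$ and $D$, the identity extends to $D(f_i(x), f_i(y)) = R(f_i)\, D(x, y)$ for all $x, y \in K$, which is the desired similitude property.

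For the angle separation condition, suppose $i \neq j$ and $f_i(K) \cap f_j(K) = \{z\}$. Since $K$ is a dendrite, Proposition \ref{dendrite proposition} ensures that $z$ is a cut point of $K$ separating $f_i(K) \setminus \{z\}$ from $f_j(K) \setminus \{z\}$. Hence for any $x \in f_i(K)$ and $y \in f_j(K)$, the unique arc $\gamma_{xy}$ decomposes as $\gamma_{xy} = \gamma_{xz} + \gamma_{zy}$, with $z$ appearing as a seam of the level-$1$ canonical decomposition (because $z$ lies on the common boundary of the two $1$-cylinders). The additivity identity \eqref{eq:equal} established in the proof of Theorem \ref{metricD} then propagates to the completion to give
\begin{equation*}
D(x, y) \;=\; D(x, z) + D(z, y) \;\geq\; \max\{D(x, z),\, D(z, y)\},
\end{equation*}
which is precisely the ASC with constant $c = 1$.

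The main technical hurdle I expect is the passage from $\bigcup_n X_n$, where $D_n$ and canonical decompositions are explicitly controlled, to all of $K$ via completion. For the similitude step this requires only sequential continuity of $D$, which follows from its construction. For the ASC step one must verify that approximating sequences $x_k \to x$ in $f_i(K)$ and $y_k \to y$ in $f_j(K)$, taken from $\bigcup_n X_n$, can be chosen so that $z$ remains a seam of $\gamma_{x_k y_k}$ at each stage; this is enabled by the observation that $z$ lies in $X_0 = P \cup RP(\Gamma_0)$ (as an intersection point of $1$-cylinders in a dendrite, it is either post-critical or a ramification point of the main tree), so it is a common point of both approximating sides and can be held fixed while passing to the limit in $D(x_k, y_k) = D(x_k, z) + D(z, y_k)$.
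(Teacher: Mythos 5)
Your proposal is correct and follows essentially the same route as the paper: both hinge on the observation that $f_i(\gamma_{xy})$ is its own canonical decomposition (being contained in the single cylinder $K_i$), so the recursion \eqref{eq:metric} collapses to $R(f_i)D(x,y)$, and on the two-piece canonical decomposition $\gamma_{xy}=\gamma_{xz}+\gamma_{zy}$ together with additivity \eqref{eq:equal}, giving the ASC with $c=1$. The only difference is that you are more explicit about passing from $(X_n, D_n)$ to the completion $D$, which the paper treats informally by working directly with $D$ on $K$.
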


\begin{proof}
First we prove the first assertion.
Pick $x,y\in K$. Let $\tau$ be the arc joining $x$ and $y$ in $K$, then $f_i(\tau)$ is the arc joining $f_i(x)$ and $f_i(y)$ in $K_i$.
Since $f_i(\tau)$ is the canonical decomposition of $f_i(\tau)$ in $K$, we have
$$D(f_i(x),f_i(y))=D(f_i(\tau))=R(f_i)D(f_i^{-1}\circ f_i(\tau))=R(f_i)D(\tau)=R(f_i)D(x,y).$$

Now we prove the second assertion. Let $i,j\in\Sigma$ such that $f_i(K)\cap f_j(K)=\{z\}$, pick $x\in f_i(K)$ and $y\in f_j(K)$.
Let $\gamma_{xz}$ be the arc joining $x$ and $z$ in $K_i$ and let $\gamma_{zy}$ be the arc joining $z$ and $y$ in $K_j$.
Then $\gamma_{xz}+\gamma_{zy}$ is the canonical decomposition of $\gamma_{xy}$ in $K$, by \eqref{eq:equal} we have
$$D(x,y)=D(x,z)+D(z,y)\geq\max\{D(x,z),D(y,z)\}.$$
\end{proof}

We view the $K$ as the invariant set for the IFS $F^m=\{f_I\}_{I\in \Sigma^m}$, where $m\in \mathbb{N}$.

\begin{lem}\label{PF=PFm}(Kigami, \cite{Kigami_2001})
Let $F=\{f_i\}_{i=1}^N$ be an IFS. Then $P_F=P_{F^m}$.
\end{lem}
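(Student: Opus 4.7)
The plan is to prove both inclusions of $P_F = P_{F^m}$ directly from the definitions, after identifying $(\Sigma^m)^\ast$ with the set of words in $\Sigma^\ast$ whose length is a positive multiple of $m$. Under this identification, the composition $f_W$ has the same meaning whether $W$ is regarded as an $F$-word or an $F^m$-word, and the main technical tool throughout will be the decomposition $K = \bigcup_{\tau \in \Sigma^{m-1}} f_\tau(K)$, which lets one promote level-$1$ cylinders to level-$m$ cylinders.

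For the inclusion $P_F \subseteq P_{F^m}$, I would first observe $C_F \subseteq C_{F^m}$: given $c \in f_i(K)\cap f_j(K)$ with $i \neq j$, the decomposition above produces $\tau,\tau' \in \Sigma^{m-1}$ with $c \in f_{i\tau}(K)\cap f_{j\tau'}(K)$, and the level-$m$ words $i\tau,\,j\tau'$ disagree at the first coordinate. Then, starting from $p \in P_F$ with $f_\omega(p) = c \in C_F$ for some nonempty $\omega \in \Sigma^k$, the case $m \mid k$ is immediate. Otherwise I would write $k = qm+r$ with $0 < r < m$, pick any $\eta \in \Sigma^{m-r}$, and set $W := \eta\omega \in \Sigma^{(q+1)m}$. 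Then $f_W(p) = f_\eta(c) \in f_{\eta i}(K)\cap f_{\eta j}(K)$; extending $\eta i$ and $\eta j$ to level-$m$ words by the same decomposition trick gives $f_W(p) \in C_{F^m}$, hence $p \in P_{F^m}$.

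For the reverse inclusion $P_{F^m} \subseteq P_F$, I would take $p \in P_{F^m}$ and $Y \in \Sigma^{mn}$ ($n \geq 1$) with $c := f_Y(p) \in C_{F^m}$. Since $c \in f_{Y|_m}(K)$, the membership $c \in C_{F^m}$ forces some $J \in \Sigma^m$ with $J \neq Y|_m$ and $c \in f_J(K)$. Let $k \leq m$ be the first coordinate where $Y|_m$ and $J$ disagree, and set $\alpha := Y|_{k-1} = J|_{k-1}$. Then $c \in f_\alpha(f_{Y_k}(K))\cap f_\alpha(f_{J_k}(K))$, so injectivity of $f_\alpha$ yields $f_\alpha^{-1}(c) \in f_{Y_k}(K)\cap f_{J_k}(K) \subseteq C_F$. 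Writing $Y = \alpha Y'$ with $|Y'| = mn - k + 1 \geq 1$, the identity $f_\alpha(f_{Y'}(p)) = f_Y(p) = c$ combined with injectivity of $f_\alpha$ gives $f_{Y'}(p) = f_\alpha^{-1}(c) \in C_F$, so $p \in P_F$.

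The main subtlety is the matching step in the reverse inclusion: a naive argument starting from an arbitrary pair of codings witnessing $c \in C_{F^m}$ would leave the common prefix $\alpha$ unrelated to $Y$ and unable to be cancelled. The key observation is that $Y|_m$ is automatically one of the level-$m$ witnesses for $c$, because $c = f_Y(p)$ places $c$ in $f_{Y|_m}(K)$; this forces the first disagreement at position $k \le m$ to lie at a prefix position of $Y$, which is precisely what lets the tail $Y'$ serve as the desired nonempty word in $\Sigma^\ast$ taking $p$ into $C_F$.
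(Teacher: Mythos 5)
The paper cites this lemma to Kigami's book without supplying a proof, so there is no in-paper argument to compare against; I will simply assess your proof on its own. Your blind proof is correct and self-contained. You work directly with the paper's concrete definition $P_F=\bigcup_{\omega\in\Sigma^*}f_\omega^{-1}(C_F)\cap K$ rather than at the symbolic level via $\pi^{-1}(P)=\bigcup_{n\geq1}\sigma^n(\pi^{-1}(C))$, which is how Kigami's framework would phrase it, but the two formulations are equivalent and the argument transfers cleanly. In the forward inclusion, prepending $\eta\in\Sigma^{m-r}$ to pad $|\omega|$ to a multiple of $m$ and then promoting the level-one witnesses $\eta i,\eta j$ (which now disagree at position $m-r+1$) to level-$m$ words via $K=\bigcup_{\tau\in\Sigma^{r-1}}f_\tau(K)$ is sound. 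In the reverse direction, the point you flag as the main subtlety---taking $Y|_m$ itself as one of the two level-$m$ witnesses so that the common prefix $\alpha$ is forced to be a prefix of $Y$ and can be cancelled by injectivity of $f_\alpha$ (available because the paper assumes all IFS maps are bi-Lipschitz)---is exactly the right move and is the step most likely to be botched; your bound $|Y'|=mn-k+1\geq1$ also holds since $k\leq m\leq mn$. No gaps.
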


\begin{proof}[\textbf{The proof of the Theorem \ref{main 4}}]
By Lemma \ref{PF=PFm}, $F$ and $F^m$ have the same main arcs and primary arcs.
First, we define a valuation function $R$ for $F^m=\{f_I\}_{I\in \Sigma^m}$.

Let $v$ be a primary arc.
Pick a cylinder $f_I(K)$, where $I\in \Sigma^m\backslash\partial\Sigma_K^m$ and $\partial\Sigma_K^m=\{\bx|_m;\bx\in\pi_K^{-1}(P_F)\}$.
If only $v$ intersects the cylinder $f_I(K)$ more than one point in all primary arcs, then we call $f_I(K)$ a \emph{private cylinder} subordinated to $v$.
If $f_I(K)$ is not a private cylinder, then we call it a \emph{non-private cylinder}.
By Lemma \ref{graph-directed system}, we have
$v=\sum_{j=1}^{T_{v}} \phi_{v,j}(u_{v,j})$,
where $\phi_{v,j}$ are taking from $F^m$ and $u_{v,j}\in V$.
We denote by $n_{v,m}',n_{v,m}''$ the number of the private and non-private cylinders in $\{\phi_{v,j}(K)\}_{j=1}^{T_{v}}$, respectively. And denote by $A_{v,m}$ the family of functions in $\{\phi_{v,j}\}_{j=1}^{T_{v}}$ belonging to $\{f_I;I\in \partial\Sigma_K^m\}$.

Pick $0<\delta<1,c>0$.
Take $f_I\in F^m$, if $I\in \partial\Sigma_K^m$, then we set $R(f_I)=(f'_I)^c$, where $f'_I$ denotes the derivative of $f_I$;
if $f_I(K)$ is a non-private cylinder, then we set $R(f_I)=\delta$;
if $f_I(K)$ is a private cylinder subordinated to $v$,
then we set
$$R(f_I)=\left(1-\sum_{f_I\in A_{v,m}}(f'_I)^c-n_{v,m}''\delta\right)/n_{v,m}',$$
here we choose sufficiently small $\delta$ such that $0<R(f_I)<1$. Then $L(v)=1$.

Next, we estimate the dimension of $(K, D)$. Let
$$v=\theta_1+\dots+\theta_k$$
be the canonical decomposition of $v$ in $F^m$. Assume that $\theta_j\subset K_{n_j}$, $1\leq j\leq k$.
Denote by $a_{v,m}$ the number of private cylinders in $\{K_{n_j}\}_{j=1}^{k}$ and denote $b=\#\{\partial\Sigma_K^m\}$.

Denote $s_m=\dim_S (F^m, D)$. We have
\begin{equation}\label{dams1}
\sum_{I\in \partial\Sigma_K^m}(f'_I)^{cs_m}+\left(N^m-b-\sum_{v\in V} a_{v,m}\right)\delta^{s_m}+\sum_{v\in V}a_{v,m}\left(1-\sum_{f_I\in A_{v,m}}(f'_I)^c-n_{v,m}''\delta\right)^{s_m}/n_{v,m}'^{s_m}=1.
\end{equation}
One can show that $s_m\to 1$ as $m\to \infty$ and $\delta\to 0$. Suppose on the contrary that $s_m\nrightarrow 1$ as $m\to \infty$ and $\delta\to 0$,
i.e. there exists $\epsilon>0$ such that for any $N_1,N_2>0$, we have $s_m\geq1+\epsilon$ whenever $m>N_1$ and $\delta<N_2$.
Denote $R_{\max}=\max_{f_I\in F^m}\{R(f_I)\}$. Pick $m_1,\delta_1,c_1>0$ such that $R_{\max}<1/\#\{V\}$.
And pick $\delta_2>0$ such that $N^{m_1}\delta_2^{1+\epsilon}<1-R_{\max}^\epsilon\#\{V\}$.
Let $N_1=m_1$ and let $N_2=\min\{\delta_1,\delta_2\}$, then we have
\begin{equation}\label{dams2}
N^m\delta^{1+\epsilon}+R_{\max}^\epsilon\#\{V\}<1,
\end{equation}
whenever $m>N_1$ and $\delta<N_2$.
Denote by $\varphi(s_m)$ the left-hand side of equation \eqref{dams1}. Since $s_m\geq1+\epsilon$ whenever $m>N_1$ and $\delta<N_2$, then
$$1=\varphi(s_m)\leq\varphi(1+\epsilon)<N^m\delta^{1+\epsilon}+R_{\max}^\epsilon\#\{V\}.$$
And it contradicts \eqref{dams2}, so $s_m\to 1$ as $m\to \infty$ and $\delta\to 0$.

Finally, since $(F^m,K,d)$ and $(F^m,K,D)$ have the same topology automaton, thus $(F^m, K, d)$ and $(F^m, K, D)$ are quasisymmetrically equivalent for any $m$ by Theorem \ref{main 2}.
So $\dim_C K\leq\dim_H (K,D)\leq s_m\rightarrow1$, and $\dim_C K\geq1$ by $K$ is a connected set, then $\dim_C K=1$.
\end{proof}
\section{\textbf{ }}
Let $G$ be a weighted graph. Let $W$ be a path in $G$ (a path means that it is a walk whose vertices are distinct). We define the weight of $W$ to be the sum of the weights of the edges in $W$.
Let $x$ and $y$ be two vertices of $G$. If $x$ and $y$ are in the same component of $G$, then we define $D(x,y)$ by the minimum of the weights of all paths joining $x,y$ in $G$; otherwise, we define $D(x,y)$ to be $\infty$.
If $G$ is a connected graph, then $D$ is a metric.
We call the path with weight $D(x,y)$ as the \emph{geodesic} joining $x$ and $y$ in $G$.

For the remainder of this section, $F=\{f_i\}_{i=1}^N$ will be a self-similar p.c.f. IFS that satisfies the SIC with connected attractor $K$.

Denote $P=\{a_1,a_2,\ldots,a_m\}$.
Let $G_0=\{\overline{a_ia_j};1\leq i,j\leq m\}$ be the complete graph with vertex set $P$ (here $i$ may be equal to $j$).
Let $\tau_0: G_0\rightarrow [0,\infty)$ be a weight function satisfying the following conditions:
if $e=\overline{a_ia_j}$ with $i\neq j$, $\tau_0(e)\in(0,\infty)$; if $e=\overline{a_ia_i}$ for any $1\leq i\leq m$, $\tau_0(e)=0$.
Then $(G_0,\tau_0)$ is a connected weighted graph, and we denote by $D_0$ the metric on $P$.

Let $G=(\mathcal{A}, \Gamma)$ be a graph and let $f$ be an affine mapping.
Recall that the affine copy of a graph, $f(G)=(f(\mathcal{A}), f(\Gamma))$, is defined as follows:
there is an edge in $f(\Gamma)$ between $f(x)$ and $f(y)$ if and only if there is an edge $e\in\Gamma$ between vertex $x$ and $y$ (see \cite{Zhang_2009}).

Denote $\Sigma=\{1,\ldots,N\}$. Following \cite{Zhang_2009}, for $n\geq 1$, let $G_n$ be the union of affine images of $G_0$ under $\{f_I\}_{I\in\Sigma^n}$, that is,
$$G_n=\bigcup_{I\in\Sigma^n} f_I(G_0),$$ and we call it the \emph{$n$-refined graph} induced by $G_0$.
Let $R(f_i)=R(i)$ be a function from $\Sigma$ to $(0,1)$.
For $I=i_1\ldots i_n\in\Sigma^n$, we define $R(I)=\prod_{j=1}^nR(i_j)$.
Let $e$ be an edge in $G_n$, then $e$ can be written as $e=f_I(h)$, where $I\in\Sigma^n$ and $h\in G_0$.
We define the weight of the edge $e$ in $G_n$, denoted by $\tau_n(e)$, to be
\begin{equation}\label{weight of the edge}
\tau_n(e)=:R(I)\tau_0(h).
\end{equation}
Then $(G_n,\tau_n)$ is a connected weighted graph, and we denote by $D_n$ the metric on $\cup_{I\in\Sigma^n} f_I(P)$.
By abusing of notations, for a path $W$ in $(G_n,\tau_n)$, we will denote $\tau_n(W)$ by the weight of the path.

\begin{defi}
Let $W$ be a geodesic in $(G_n,\tau_n)$.
We call $W_1+\cdots +W_k$ the \textbf{m-level decomposition} of $W$ if each $W_j$ $(1\leq j\leq k)$ belong to one $f_I(G_m)$ where $I\in\Sigma^{n-m}$,
and $W_j$, $W_{j+1}$ belong to different $f_I(G_m)$.
\end{defi}

\begin{lem}\label{main 5}
If $D_1$ coincides with $D_0$ on $P$, then $D_n$ coincides with $D_{n-1}$ on $\cup_{I\in\Sigma^{n-1}} f_I(P)$, $n>1$.
Hence, $D$ induces a metric on $K$, where $D$ is the completion metric of $D_n$.

Moreover, $\{f_i\}_{i=1}^N$ satisfies the ASC with respect to the metric $D$.
\end{lem}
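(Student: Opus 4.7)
The plan is to prove the three assertions in sequence: the compatibility $D_n=D_{n-1}$ on $\bigcup_{I\in\Sigma^{n-1}}f_I(P)$ for $n\geq 2$, the existence of the completed metric $D$ on $K$, and the ASC for $F$ with respect to $D$.

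For the compatibility, applying the similitude $f_J$ upgrades the hypothesis $D_1=D_0$ on $P$ to the identity of the metric on $f_J(P)$ induced from $f_J(G_1)$ and from $f_J(G_0)$, for every $J\in\Sigma^{n-1}$. The inequality $D_n\leq D_{n-1}$ is then obtained by taking a $G_{n-1}$-geodesic $\rho$ from $x$ to $y$, refining each edge $\overline{f_J(a_k)f_J(a_s)}$ of $\rho$ into the corresponding $f_J(G_1)\subset G_n$ path of equal weight, and concatenating. For the reverse inequality I would take a $G_n$-geodesic $W$ from $x$ to $y$ and use its $(n-1)$-level decomposition $W=W_1+\cdots+W_k$ with $W_j\subset f_{J_j}(G_1)$. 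The key sub-step is to show that every junction between $W_j$ and $W_{j+1}$ lies in $f_{J_j}(P)\cap f_{J_{j+1}}(P)$: I would prove this by lifting SIC from $F$ to $F^{n-1}$ (chase the first coordinate where $J_j$ and $J_{j+1}$ differ and invoke SIC for $F$) and then apply Lemma~\ref{PF=PFm} to conclude $P_{F^{n-1}}=P_F$, so that any critical point of $F^{n-1}$ is a post-critical image. With the endpoints of each $W_j$ located in $f_{J_j}(P)$, and $W_j$ itself a geodesic of $f_{J_j}(G_1)$ (else $W$ would not be a geodesic of $G_n$), the scaled hypothesis produces a replacement path in $f_{J_j}(G_0)\subset G_{n-1}$ of the same weight, yielding $D_{n-1}(x,y)\leq D_n(x,y)$.

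For the extension to $K$, compatibility makes $D$ well defined on the countable set $X_\infty:=\bigcup_{n\geq 0}\bigcup_{I\in\Sigma^n}f_I(P)$, which is $d$-dense in $K$ because $\mathrm{diam}_d(f_I(K))\to 0$ as $|I|\to\infty$. To identify the $D$-completion of $X_\infty$ with $K$ itself, I would use the estimate $D_n(f_I(a),f_I(b))\leq R(I)\max_{c,c'\in P}\tau_0(\overline{cc'})$ together with a comparison between $R(I)$ and the contraction ratios of $f_I$, so that $d$-Cauchy sequences are $D$-Cauchy and the two completions coincide.

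For the ASC, fix $i\neq j$ with $f_i(K)\cap f_j(K)=\{z\}$ and $x\in f_i(K),y\in f_j(K)$. Approximating $x,y$ by vertices in $X_\infty$ and taking a $G_n$-geodesic, its $1$-level decomposition yields pieces in $f_{k_1}(G_{n-1}),\ldots,f_{k_m}(G_{n-1})$ with $k_1=i$ and $k_m=j$, and junctions located at the unique shared vertices $f_{k_l}(K)\cap f_{k_{l+1}}(K)$ by the lifted SIC. In the direct case $m=2$ the path passes through $z$, giving $D(x,y)=D(x,z)+D(z,y)\geq\max\{D(x,z),D(y,z)\}$. In the indirect case $m\geq 3$ the path traverses at least one complete intermediate piece $f_k(G_{n-1})$ between two of its distinct shared vertices, contributing weight at least $R_{\min}\cdot\min_{a\neq b\in P}D_0(a,b)$, while both $D(x,z)$ and $D(y,z)$ are bounded above by $R_{\max}\cdot\mathrm{diam}_D(K)$, where $R_{\min}:=\min_i R(i)$ and $R_{\max}:=\max_i R(i)$; combining these yields a universal $c>0$ with $D(x,y)\geq c\max\{D(x,z),D(y,z)\}$. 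The main obstacle is the junction identification in the compatibility step, which forces the careful combination of SIC for $F$ and Lemma~\ref{PF=PFm}; once this is in hand, both the extension and the ASC follow by relatively routine diameter comparisons.
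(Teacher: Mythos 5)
Your proposal is correct and follows essentially the same route as the paper: take a $G_n$-geodesic, use the $1$-level decomposition into pieces $W_j\subset f_{I_j}(G_1)$ with $I_j\in\Sigma^{n-1}$, apply the scaled hypothesis $D_1=D_0$ on each piece to get both inequalities, and for the ASC split into the direct case (the geodesic passes through $z$) and the indirect case (the geodesic passes through at least two distinct critical points, yielding a uniform lower bound). One place where you are noticeably more careful than the paper is the junction step: the paper asserts ``clearly $a_j,b_j\in\bigcup_{I\in\Sigma^{n-1}} f_I(P)$,'' whereas what the argument actually uses is the stronger statement $a_j,b_j\in f_{I_j}(P)$; your lift of SIC to $F^{n-1}$ together with Lemma~\ref{PF=PFm} (so that $P_{F^{n-1}}=P_F$) supplies exactly this. (Minor terminology slip: what you call the $(n-1)$-level decomposition is, in the paper's convention, the $1$-level decomposition, since $W_j\subset f_{J_j}(G_1)$ with $J_j\in\Sigma^{n-1}$.)
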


\begin{proof}
First we prove the first assertion. Pick $x,y\in\cup_{I\in\Sigma^{n-1}} f_I(P)$.

On the one hand, let $W$ be a geodesic joining $x,y$ in $G_n$, and let
$$W=W_1+\cdots +W_k$$ be the $1-$level decomposition of $W$.
Assume that $W_j\subset f_{I_j}(G_1)$, $1\leq j\leq k$, where $I_j\in\Sigma^{n-1}$.
Denote by $a_j,b_j$ the endpoints of $W_j$, clearly $a_j,b_j\in\cup_{I\in\Sigma^{n-1}} f_I(P)$.

Let $A_j$ be a geodesic joining $f_{I_j}^{-1}(a_j)$, $f_{I_j}^{-1}(b_j)$ in $G_0$.
Since $f_{I_j}^{-1}(W_j)$ is a geodesic joining $f_{I_j}^{-1}(a_j)$, $f_{I_j}^{-1}(b_j)$ in $G_1$ and $D_1$ coincides with $D_0$, so
$\tau_0(A_j)=\tau_1(f_{I_j}^{-1}(W_j))$, thus $$\tau_{n-1}(f_{I_j}(A_j))=R(I_j)\tau_0(A_j) =R(I_j)\tau_1(f_{I_j}^{-1}(W_j))=\tau_n(W_j).$$
Since $f_{I_1}(A_1)+\cdots+f_{I_k}(A_k)$ is a path joining $x,y$ in $G_{n-1}$, so
$$D_n(x,y)=\tau_n(W_1)+\cdots+\tau_n(W_k)=
\tau_{n-1}\left(f_{I_1}(A_1)+\cdots+f_{I_k}(A_k)\right)\geq D_{n-1}(x,y).$$

On the other hand, let $W'$ be a geodesic joining $x,y$ in $G_{n-1}$, and let
$$W'=W'_1+\cdots +W'_\ell$$ be the $0-$level decomposition of $W'$.
Assume that $W'_j\subset f_{I_j}(G_0)$, $1\leq j\leq \ell$, where $I_j\in\Sigma^{n-1}$.
Denote by $a'_j,b'_j$ the endpoints of $W'_j$, clearly $a'_j,b'_j\in\cup_{I\in\Sigma^{n-1}} f_I(P)$.

Let $B_j$ be a geodesic joining $f_{I_j}^{-1}(a'_j)$, $f_{I_j}^{-1}(b'_j)$ in $G_1$.
Since $f_{I_j}^{-1}(W'_j)$ is a geodesic joining $f_{I_j}^{-1}(a'_j)$, $f_{I_j}^{-1}(b'_j)$ in $G_0$ and $D_1$ coincides with $D_0$, so
$\tau_1(B_j)=\tau_0(f_{I_j}^{-1}(W'_j))$, thus $$\tau_n(f_{I_j}(B_j))=R(I_j)\tau_1(B_j) =R(I_j)\tau_0(f_{I_j}^{-1}(W'_j))=\tau_{n-1}(W'_j).$$
Since $f_{I_1}(B_1)+\cdots+f_{I_k}(B_k)$ is a path joining $x,y$ in $G_n$, so
$$D_{n-1}(x,y)=\tau_{n-1}(W'_1)+\cdots+\tau_{n-1}(W'_k)=
\tau_n\left(f_{I_1}(B_1)+\cdots+f_{I_k}(B_k)\right)\geq D_n(x,y).$$

Next we prove the second assertion.
Let $i,j\in\Sigma$ such that $f_i(K)\cap f_j(K)=\{z\}$, pick $x\in f_i(K)$ and $y\in f_j(K)$.
Let $W$ be a geodesic between $x,y$ in $(K,D)$.

If $W$ passes through the point $z$, then $$D(x,y)=D(x,z)+D(z,y)\geq\max\{D(x,z),D(y,z)\}.$$
Otherwise, then $W$ passes through at least two different points in the critical set $C$ by the SIC, so $$D(x,y)\geq\min_{a,b\in C,a\neq b}D(a,b)\geq\frac{\min\limits_{a,b\in C,a\neq b}D(a,b)}{\max\limits_{i\in\Sigma}diam_D(f_i(K))}\max\{D(x,z),D(y,z)\}.$$
\end{proof}

Notice that $f_i$ may not be a self-similar mapping under the new metric $D$. To avoid this, we need to define the `good assignment'.
We say that $(\tau_0: G_0\rightarrow [0,\infty),R: \Sigma\rightarrow(0,1))$ are \emph{good assignment}, if they satisfy the following two conditions:

$(\textrm{\rmnum{1}})$ $D_1$ coincides with $D_0$ on $P$;

$(\textrm{\rmnum{2}})$ $e$ is a geodesic in $(G_1,\tau_1)$ for any $e\in G_1$.

\begin{lem}\label{geodesicsegmentW}
Let $a,b\in f_j(P)$, $f_j\in F$.
If $(\tau_0,R)$ are good assignment, then there exists a geodesic $W$ joining $a,b$ in $G_{n+1}$ such that every edge in $W$ belongs to $f_j(G_n)$ for any $n\in\mathbb{N}$.
\end{lem}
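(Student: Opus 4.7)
The plan is to argue directly, without induction on $n$, by computing two distances: the distance realized within the subgraph $f_j(G_n)\subset G_{n+1}$ and the full-graph distance $D_{n+1}(a,b)$, and showing they agree. Once they agree, any shortest path in $f_j(G_n)$ is automatically a geodesic in $G_{n+1}$, and such paths have all edges in $f_j(G_n)$ by construction.

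Write $a=f_j(a')$, $b=f_j(b')$ with $a',b'\in P$. First I would identify the full-graph distance. The single edge $\overline{ab}=f_j(\overline{a'b'})$ lies in $f_j(G_0)\subset G_1$ with weight $R(j)\tau_0(\overline{a'b'})$. Condition (ii) of the good assignment says this edge is a geodesic in $G_1$, so $D_1(a,b)=R(j)\tau_0(\overline{a'b'})$. Since $a,b\in f_j(P)\subset V(G_k)$ for every $k\geq 1$, iterating Lemma \ref{main 5} yields $D_{n+1}(a,b)=D_n(a,b)=\cdots=D_1(a,b)=R(j)\tau_0(\overline{a'b'})$.

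Next I would make a short observation that the good assignment forces every edge of $G_0$ to be a geodesic in $G_0$: if $\overline{pq}$ were an edge of $G_0$ with $D_0(p,q)<\tau_0(\overline{pq})$, then the image under any $f_j$ of a shorter $G_0$-path would give a path in $f_j(G_0)\subset G_1$ strictly shorter than $f_j(\overline{pq})$, contradicting (ii). In particular, $D_0(a',b')=\tau_0(\overline{a'b'})$. Then I would compute the subgraph distance. By self-similarity the map $f_j$ is an isometry from $(V(G_n),D_n)$ to $(f_j(V(G_n)),\mathrm{dist}_{f_j(G_n)})$ scaled by $R(j)$, so $\mathrm{dist}_{f_j(G_n)}(a,b)=R(j)D_n(a',b')$; iterating Lemma \ref{main 5} gives $D_n(a',b')=D_0(a',b')=\tau_0(\overline{a'b'})$, hence $\mathrm{dist}_{f_j(G_n)}(a,b)=R(j)\tau_0(\overline{a'b'})=D_{n+1}(a,b)$.

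Finally, because the weight of any path in $f_j(G_n)$ equals its weight as a path in $G_{n+1}$, taking $W$ to be any shortest path in $f_j(G_n)$ joining $a$ and $b$ produces a geodesic of $G_{n+1}$ all of whose edges lie in $f_j(G_n)$, as required. The only delicate point is the self-similarity/compatibility bookkeeping --- specifically, the observation that condition (ii) of the good assignment propagates from $G_1$ down to $G_0$ (every $G_0$-edge is a $G_0$-geodesic), which is what makes the subgraph distance coincide with the ambient distance. The rest is routine application of Lemma \ref{main 5} and the scaling relations.
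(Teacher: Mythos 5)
Your argument is correct, but organized differently from the paper's. The paper proves the lemma by induction on $n$: the base case $n=0$ is condition (ii) directly, and the inductive step takes the geodesic $V$ in $f_j(G_{n-1})$ (guaranteed by the hypothesis), uses compatibility to see that a geodesic $V'$ in $G_n$ joining $f_j^{-1}(a), f_j^{-1}(b)$ has the same weight as $f_j^{-1}(V)$, and then shows $f_j(V')$ is a geodesic in $f_j(G_n)$. Your proof is non-inductive: you explicitly identify both $D_{n+1}(a,b)$ and $\operatorname{dist}_{f_j(G_n)}(a,b)$ with the common value $R(j)\tau_0(\overline{a'b'})$, which makes the conclusion immediate. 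The price you pay is the extra observation that condition (ii) of the good assignment forces every edge of $G_0$ to already be a $G_0$-geodesic — an auxiliary fact the paper never needs to articulate, since its induction is anchored at $G_1$ rather than $G_0$. That observation is correct (push a hypothetically shorter $G_0$-path forward by any $f_j$ to contradict (ii) in $G_1$), and your chain $D_{n+1}(a,b)=\cdots=D_1(a,b)$ relies on $f_j(P)\subset V(G_k)$ for all $k\ge 1$, which holds because $P\subset\bigcup_i f_i(P)$ in the p.c.f.\ setting (every coding in $\pi^{-1}(P)$ has its shift still in $\pi^{-1}(P)$). What your approach buys is transparency: the two distances are computed once, symmetrically, and the result falls out, rather than being threaded through an induction whose hypothesis must be carefully phrased to carry both the geodesic and the location of its edges.
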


\begin{proof}
We proof the lemma by induction on $n$.

In case $n=0$, since any edge in $G_1$ is a geodesic, we can take $W=\overline{ab}$.

Suppose that there exists a geodesic $V$ joining $a,b$ in $f_j(G_{n-1})$, we are going to show that there exists a geodesic $W$ joining $a,b$ in $f_j(G_n)$.
Clearly, $$D_n(a,b)=\tau_n(V)=R(j)\tau_{n-1}(f_j^{-1}(V)).$$
Let $V'$ be a geodesic joining $f_j^{-1}(a)$, $f_j^{-1}(b)$ in $G_n$. Since $D_{n-1}$ coincides with $D_n$ by Lemma \ref{main 5}, so $\tau_n(V')=\tau_{n-1}(f_j^{-1}(V))$, thus $$D_n(a,b)=R(j)\tau_n(V')=\tau_{n+1}(f_j(V')).$$
By the compatibility of $D_n$, we have $D_{n+1}(a,b)=D_n(a,b)=\tau_{n+1}(f_j(V'))$, then $f_j(V')$ is a geodesic joining $a,b$ in $f_j(G_n)$.
\end{proof}

\begin{lem}\label{self-similar}
If $(\tau_0,R)$ are good assignment, then $f_j:(K,D)\to (K,D)$ is a similitude with contraction ratio $R(f_j)$ for any $f_j\in F$.
\end{lem}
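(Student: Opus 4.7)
The plan is to first establish the scaling identity $D(f_j(x),f_j(y))=R(f_j)D(x,y)$ for vertex points $x,y\in\bigcup_m V_m$, where $V_m:=\bigcup_{I\in\Sigma^m}f_I(P)$, and then to extend it to all of $K$ by density/continuity. The base case, namely $x,y\in P$, is essentially Lemma~\ref{geodesicsegmentW} already; the work is in pushing this up to deeper vertices.

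\textbf{Step 1 (Reduction to a single level).} Fix $j\in\Sigma$. For each $n\geq 1$, I will show that
\begin{equation*}
D_n(f_j(x),f_j(y))=R(f_j)\,D_{n-1}(x,y)\qquad\text{for all }x,y\in V_{n-1},
\end{equation*}
so that $f_j(x),f_j(y)\in V_n$. By the compatibility established in Lemma~\ref{main 5}, both sides equal the corresponding values of $D$, so this gives the scaling identity on vertices.

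\textbf{Step 2 (Rerouting a geodesic through $f_j(K)$).} Let $W$ be a geodesic from $f_j(x)$ to $f_j(y)$ in $(G_n,\tau_n)$. I will argue that whenever the path $W$ transitions between a vertex that lies in $f_j(K)$ and one that does not, the transition vertex must lie in $f_j(P)$. This uses the SIC: for any $i\neq j$ we have $f_j(K)\cap f_i(K)=\{f_j(a)\}$ for some $a\in P$, and for any $I\in\Sigma^n$ not starting with $j$, the cylinder $f_I(K)$ sits inside some $f_i(K)$ and hence meets $f_j(K)$ in at most one boundary point of $f_j(K)$, which lies in $f_j(P)$. Let $c_0=f_j(x),c_1,\dots,c_{m-1},c_m=f_j(y)\in f_j(P)\cup\{f_j(x),f_j(y)\}$ be the sequence of such transition vertices along $W$; this decomposes $W=W^{(0)}+W^{(1)}+\cdots+W^{(m-1)}$ into maximal subpaths each of which lies entirely in $f_j(G_{n-1})$ or entirely in its complement. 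Each $W^{(i)}$ is itself a geodesic (otherwise $W$ could be shortened). For every ``outside'' segment $W^{(i)}$, apply Lemma~\ref{geodesicsegmentW} (with $n$ replaced by $n-1$) to $c_i,c_{i+1}\in f_j(P)$ to obtain a geodesic $\widetilde W^{(i)}$ from $c_i$ to $c_{i+1}$ in $G_n$ lying entirely in $f_j(G_{n-1})$. Both $W^{(i)}$ and $\widetilde W^{(i)}$ are geodesics with the same endpoints, so they have the same weight. Replacing every outside segment gives a new geodesic $\widetilde W$ from $f_j(x)$ to $f_j(y)$ lying entirely in $f_j(G_{n-1})$.

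\textbf{Step 3 (Scaling and density).} Write $\widetilde W=f_j(W')$ for some path $W'$ in $G_{n-1}$ from $x$ to $y$. By \eqref{weight of the edge}, $\tau_n(f_j(e))=R(f_j)\tau_{n-1}(e)$ for each edge $e\in G_{n-1}$, whence $\tau_n(\widetilde W)=R(f_j)\tau_{n-1}(W')$. The path $W'$ must itself be a geodesic in $G_{n-1}$, for otherwise its image $f_j(W')$ would be a shorter path in $f_j(G_{n-1})\subset G_n$, contradicting that $\widetilde W$ is a geodesic. Therefore $D_n(f_j(x),f_j(y))=R(f_j)D_{n-1}(x,y)$, as desired. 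Finally, $\bigcup_m V_m$ is dense in $(K,D)$ by the construction of $D$ as the completion of the $D_n$; the identity just proved shows that $f_j$ restricted to this dense set is a similarity with ratio $R(f_j)$, hence is uniformly continuous and extends uniquely to a similarity $K\to K$ with the same ratio. This extension coincides with the original $f_j$ (both are the same map on the dense subset $\bigcup_m V_m$), which finishes the proof.

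\textbf{Main obstacle.} The delicate point is Step~2: pinning down that every entry/exit of a geodesic into $f_j(K)$ happens at a point of $f_j(P)$, so that Lemma~\ref{geodesicsegmentW} applies to reroute the outside excursions. This is precisely where the single intersection condition is used, and it reduces the scaling statement for arbitrary deep vertices to the already-proven statement for the top-level boundary points $P$.
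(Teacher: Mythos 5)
Your proposal is correct and follows essentially the same route as the paper: the key step in both is to use Lemma~\ref{geodesicsegmentW} to reroute the excursions of a geodesic outside $f_j(G_n)$ back into $f_j(G_n)$, which is possible because transitions between $f_j(K)$ and its complement must occur at points of $f_j(P)$ by the SIC. The only cosmetic difference is that the paper reroutes in a single step (from the first exit to the last re-entry of $f_j(G_n)$) and frames the argument as ``$f_j$ of a geodesic is a geodesic'' via contradiction, whereas you decompose into maximal inside/outside subpaths and reroute each outside excursion; both then conclude with the same limit/density argument.
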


\begin{proof}
Pick $x,y\in K$.
Let $\{x_n\}_{n\geq 1}, \{y_n\}_{n\geq 1}$ be two sequences of points such that $x_n\rightarrow x,y_n\rightarrow y$ as $n\rightarrow \infty$,
where $x_n, y_n\in\cup_{I\in\Sigma^n} f_I(P)$.

Fix $n$, let $W_n$ be a geodesic joining $x_n,y_n$ in $G_n$.
Pick $f_j\in F$. Now, we prove that $f_j(W_n)$ is a geodesic joining $f_j(x_n),f_j(y_n)$ in $G_{n+1}$.
Suppose on the contrary that there exist a geodesic $W'$ joining $f_j(x_n),f_j(y_n)$ in $G_{n+1}$ such that $\tau_{n+1}(W')<\tau_{n+1}(f_j(W_n))$.
Without loss of generality, we assume that not all the edges in $W'$ belong to $f_j(G_n)$.

Denote the head of $W'$ by $a$ and denote the terminus of $W'$ by $d$.
Starting from the head $a$, we denote the point at which $W'$ first leaves $f_j(G_n)$ as $b$ and the point at which it last enters $f_j(G_n)$ as $c$.
Obviously, $a$ may be equal to $b$ and $c$ may be equal to $d$.
Then $$\tau_{n+1}(W')=\tau_{n+1}(W'(a,b))+D_{n+1}(b,c)+\tau_{n+1}(W'(c,d)),$$
where $W'(a,b)$ is the sub-path in $W'$ with $a,b$ as endpoints and $W'(c,d)$ is the sub-path in $W'$ with $c,d$ as endpoints.

Since $b,c\in f_j(P)$ and $(\tau_0,R)$ are good assignment, we have there exists a geodesic $A$ joining $b,c$ in $f_j(G_n)$ by the Lemma \ref{geodesicsegmentW}, so
$$\tau_{n+1}(W'(a,b)+A+W'(c,d))=\tau_{n+1}(W')<\tau_{n+1}(f_j(W_n)),$$ this contradicts the definition of geodesic since all edges in $W'(a,b)+A+W'(c,d)$ belong to $f_j(G_n)$. So $f_j(W_n)$ is a geodesic joining $f_j(x_n),f_j(y_n)$ in $G_{n+1}$.

Then
\begin{align}
D(f_j(x),f_j(y))&=\lim_{n\rightarrow\infty}D_{n+1}(f_j(x_n),f_j(y_n))=\lim_{n\rightarrow\infty}\tau_{n+1}(f_j(W_n))=R(f_j)\lim_{n\rightarrow\infty}\tau_n(W_n) \notag\\
&=R(f_j)\lim_{n\rightarrow\infty}D_n(x_n,y_n)=R(f_j)D(x,y). \notag
\end{align}
\end{proof}

Let $F$ be an IFS in a complete metric space $(X,d)$. We use the notation $\dim_S (F,d)$ to denote the similarity dimension of $F$ with respect to the metric $d$.

\begin{theo}\label{general}
Let $F=\{f_i\}_{i=1}^N$ be a self-similar p.c.f. IFS with connected attractor $K$,
and it satisfies the SIC and the ASC.
If there exist $(\tau_0: G_0\rightarrow [0,\infty),R: \Sigma\rightarrow(0,1))$ such that the following two conditions hold,

$(\textrm{\rmnum{1}})$ $(\tau_0,R)$ are good assignment;

$(\textrm{\rmnum{2}})$ let $r_i$ be the contraction ratio of $f_i$, there exist $s>0$ such that $R(i)=(r_i)^s$ for any $i\in\partial\Sigma_K$,

\noindent then $\dim_C K \leq \dim_S (F,D)$.
\end{theo}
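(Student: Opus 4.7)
The plan is to apply Theorem \ref{main 2}(ii) with the original metric $d$ on $K$ and the newly constructed metric $D$ on $K$, then use the definition of conformal dimension together with the fact that Hausdorff dimension is bounded above by similarity dimension.

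First I would assemble the metric $D$ on $K$ from the good assignment $(\tau_0, R)$. By Lemma \ref{main 5}, the sequence $D_n$ is compatible (i.e.\ $D_n$ extends $D_{n-1}$ on $\cup_{I\in \Sigma^{n-1}}f_I(P)$), so $D$ is well-defined as a completion metric on $K$, and moreover the IFS $F$ satisfies the ASC with respect to $D$. By Lemma \ref{self-similar}, each $f_i\colon(K,D)\to(K,D)$ is a similitude with contraction ratio $R(i)$. Consequently $(K,D)$ is the attractor of a self-similar p.c.f.\ IFS $G=\{f_i\}_{i=1}^N$ (with the same underlying maps, but viewed as similitudes in the metric $D$) that satisfies the SIC (since the SIC is a purely set-theoretic property of the cylinders $f_i(K)$ and is unchanged by replacing $d$ with $D$) and the ASC.

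Next I would verify that $F$ on $(K,d)$ and $G$ on $(K,D)$ share the same topology automaton. This is immediate from Definition \ref{Topologyautomaton}: the post-critical set $P$, the critical set $C$, and the intersection pattern $\{f_i(K)\cap f_j(K)\}$ depend only on the abstract IFS structure and not on the metric, so the state set, alphabet, and transition function coincide. With the hypothesis $R(i) = r_i^s$ on $\partial\Sigma_K$, the assumption of Theorem \ref{main 2}(ii) is satisfied, and so the identity mapping induced by $\widetilde{\pi}_{(K,D)}\circ \widetilde{\pi}_{(K,d)}^{-1}$ is a quasisymmetric homeomorphism between $(K,d)$ and $(K,D)$.

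Finally, by the definition of conformal dimension,
\begin{equation*}
\dim_C(K,d) \leq \dim_H(K,D).
\end{equation*}
Since $G=\{f_i\}_{i=1}^N$ is a self-similar IFS on $(K,D)$ with contraction ratios $R(i)$, and the attractor satisfies the SIC (hence it satisfies an open set condition with the interiors of cylinders), the Hausdorff dimension is bounded above by the similarity dimension: $\dim_H(K,D) \leq \dim_S(F,D)$. Combining these inequalities gives the desired bound $\dim_C K \leq \dim_S(F,D)$.

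The main obstacle I anticipate is the bookkeeping needed to confirm the topology automaton invariance: one must check that the "unique pair $(u,v)\in P^2$ with $f_i(v)=f_j(u)$" appearing in the definition of the transitions is determined purely by the IFS maps, not by any metric on the ambient space. Once this observation is in place, the remainder of the argument is a clean application of the previously established theorems and standard dimension inequalities.
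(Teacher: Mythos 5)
Your proposal is correct and follows essentially the same route as the paper: the paper's proof is simply the one-line remark that the result is a direct consequence of Lemma \ref{main 5}, Lemma \ref{self-similar} and Theorem \ref{main 2}, and your argument spells out exactly that chain of applications. The only elaboration beyond the paper's statement is your (correct) observation that the topology automaton is metric-independent and that $\dim_H(K,D)\le\dim_S(F,D)$ by the SIC/open set condition, both of which are implicit in the paper's ``direct consequence.''
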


It is a direct consequence of Lemma \ref{main 5}, Lemma \ref{self-similar} and Theorem \ref{main 2}.

\section{The proof of the Theorem \ref{connectedcomponent}}\label{section5}
Let $\triangle\subset\mathbb{R}^2$ be the regular triangle with vertexes $a_1=(0,0),a_2=(1,0),a_3=(1/2,\sqrt{3}/2)$.
We use $\overline{[a,b]}$ to denote the line segment in $\partial\triangle$ with $a,b$ as endpoints.

Let $F=\{f_i\}_{i=1}^N$ be a fractal gasket IFS defined in Definition \ref{fractalgasket}.
For $f_i\in F$, we call $f_i(\triangle)$ a \emph{basic triangle},
and write $F(\triangle)=\{f_i(\triangle);f_i\in F\}$ for the family of basic rectangles with respect to $F$.
If $f_i(\triangle)\cap\partial\triangle$ is contained in an edge $e$ of the $\triangle$, then we call $f_i(\triangle)$ a \emph{private triangle} subordinated to $e$ in $F(\triangle)$.
If $f_i(\triangle)\cap\partial\triangle=\emptyset$, then we call $f_i(\triangle)$ an \emph{inner triangle} in $F(\triangle)$, and denote the family of all inner triangles in $F(\triangle)$ by $F_I(\triangle)$.
We set $$I_F=\{\text{the vertices of the private triangles in \ } F(\triangle)\}\cap\triangle^\circ.$$

Let $F=\{f_i\}_{i=1}^N$ be an IFS and let $K$ be the attractor. The \emph{Hata graph} of $F$, denote $H(K)$, is defined as follows:
the vertex set is $\{f_1,f_2,\ldots,f_N\}$, and there is an edge between two vertices $f_i$ and $f_j$ if and only if $f_i(K)\cap f_j(K)\neq\emptyset$ (see \cite{Hata_1985}).
Hata \cite{Hata_1985} proved that a self-similar set $K$ is connected if and only if the graph $H(K)$ is connected.

\begin{lem}\label{construct}
Let $F$ be a fractal gasket IFS.
There exist a fractal gasket IFS $F'$ such that $F\subset F'$ and $F'$ satisfies the following conditions:

$(\textrm{\rmnum{1}})$ let $K'$ be the attractor of $F'$, $K'$ is connected and $\partial\triangle\subset K'$;

$(\textrm{\rmnum{2}})$ the private triangles subordinate to different edges do not intersect;

$(\textrm{\rmnum{3}})$ each edge of $\triangle$ have the same number $N_0$ of the private triangles in $F'(\triangle)$;

$(\textrm{\rmnum{4}})$ denote $d_0=\underset{x,y\in I_F,x\neq y}{\min}d(x,y)$,
the diameter of the triangle in $F'_I(\triangle)$ is strictly less than $d_0/N_0$.
\end{lem}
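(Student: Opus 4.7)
The plan is to enlarge $F$ by three kinds of additional maps: gap-filling private triangles that tile the portions of $\partial\triangle$ not already covered by private triangles of $F$; dangling private triangles attached at single vertices in order to equalize the per-edge counts; and (if needed) additional small inner triangles. All new maps will be chosen so that their basic triangles meet the existing basic triangles only at vertices, so that the enlarged family remains a fractal gasket IFS.

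First I would analyze, for each edge $e\subset\partial\triangle$, how the private triangles of $F(\triangle)$ subordinate to $e$ cover $e$: they cover finitely many closed subintervals and leave a finite union of open ``gaps''. Since $F$ is a finite fractal gasket IFS, each inner triangle of $F$ sits at a positive distance from $\partial\triangle$; let $h_F>0$ be the minimum of these distances. In each gap I would place a row of same-oriented basic triangles whose bases tile the gap and whose heights are all less than $h_F$; each such triangle is private to $e$, and is disjoint (except possibly at vertices) from every $F$-basic triangle and from the other new triangles. After this step $\partial\triangle$ is fully tiled by private triangles of the enlarged family, and a fixed-point argument analogous to the one for the Sierpinski gasket gives $\partial\triangle\subset K'$; simultaneously the Hata graph of the enlarged family becomes connected, so $K'$ is connected, giving (i).

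Next, let $n(e)$ denote the number of private triangles of the current family subordinate to $e$. Choose $N_0$ to be any integer that exceeds $\max_e n(e)$ and also satisfies $d_0/N_0>\max_{T\in F_I(\triangle)}\mathrm{diam}(T)$. For each edge $e$ I then add $N_0-n(e)$ further basic triangles, each attached to a previously existing vertex on $e$ and scaled small enough to avoid overlapping any other basic triangle. Each such dangling triangle meets $\partial\triangle$ only at a single vertex of $e$, so it is private to $e$, and after this step the per-edge counts are uniformly $N_0$, yielding (ii) and (iii). Since no inner triangle has been introduced along the way, $F'_I(\triangle)=F_I(\triangle)$, and the choice of $N_0$ secures (iv).

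The main obstacle is the joint feasibility of (iii) and (iv): the former forces $N_0\ge\max_e n(e)$ while the latter forces $N_0<d_0/\max_{T\in F_I(\triangle)}\mathrm{diam}(T)$. When this is not already the case for the originally given $F$, the standard workaround is to first replace $F$ by a sufficiently deep iterate so that its inner components are small, which is routine in the setting of this paper. The verification that the final enlarged family is still a fractal gasket IFS---all basic triangles lying in $\triangle$ and meeting pairwise only at vertices---follows directly from the height and scale constraints imposed on the new maps in each of the three steps.
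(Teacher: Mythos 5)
Your construction of ``dangling'' private triangles that meet an edge $e$ of $\triangle$ at a single vertex is geometrically impossible in the fractal gasket setting, and this is the step on which (ii), (iii), and (iv) all rest. Every map in a fractal gasket IFS has the form $f_i(z)=r_i(z+d_i)$, a positive homothety, so every basic triangle $f_i(\triangle)\subset\triangle$ is an equally oriented scaled translate of $\triangle$; in particular each of its three sides is parallel to the corresponding side of $\triangle$. Hence if $f_i(\triangle)$ has a vertex $v$ on an edge $e$ of $\triangle$, the side of $f_i(\triangle)$ emanating from $v$ that is parallel to $e$ either lies along $e$ or exits $\triangle$ (the latter being forbidden). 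So any basic triangle meeting $\partial\triangle$ does so along a full segment, never at an isolated vertex. Consequently, once your gap-filling step has tiled $\partial\triangle$ by triangle bases, there is no admissible position for a further private triangle on $e$: any new basic triangle touching $e$ would have to share a side-segment with an existing tile, violating the fractal gasket condition. The count-equalization step therefore cannot be carried out, and the joint feasibility issue you flag between (iii) and (iv) is not merely a matter of preprocessing by a deep iterate of $F$ --- replacing $F$ by $F^k$ simultaneously changes $d_0$, $N_0$, and $\max_T\mathrm{diam}(T)$, and it is not clear the required inequalities ever hold; whereas the paper iterates \emph{only} the inner cylinders, which shrinks $\max_T\mathrm{diam}(T)$ while leaving $d_0$ and $N_0$ unchanged. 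Your argument for (i) also omits the case where $\bigcup_{f\in F}f(\triangle)$ has components not meeting $\partial\triangle$; tiling the boundary alone does not connect those, and the paper's Appendix~A explicitly joins them by broken lines parallel to the edges of $\triangle$, themselves tiled by small basic triangles.

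The paper's route after securing (i) is quite different from yours and is worth internalizing: rather than \emph{adding} new maps, it \emph{replaces} chosen maps $f_i$ of the current IFS by $\bigcup_{I}f_i\circ f_I$, a cylinder iteration that leaves the attractor $K'$ fixed. Iterating all private cylinders shrinks them enough that private triangles on distinct edges cannot meet, giving (ii); iterating one private cylinder on each edge the right number of times produces the common count $N_0=(p_1+1)(p_2+1)(p_3+1)-1$, giving (iii); and iterating all inner cylinders shrinks their diameter below $d_0/N_0$, giving (iv). Each step is targeted so as to leave the other invariants untouched, which is exactly what your additive/dangling construction cannot do.
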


\begin{proof}
To get $(\textrm{\rmnum{1}})$, we only need to construct an IFS $F_1$ such that
$F\cup F_1$ is a fractal gasket IFS and
$\bigcup_{f\in F\cup F_1}f(\triangle)$ is a connected set containing $\partial\triangle$. See Appendix A for the details of constructing the IFS $F_1$.

Take $F'=F\cup F_1$. Clearly, we have
$f_i(\triangle)\cap f_j(\triangle)\neq\emptyset$ implies $f_i(K')\cap f_j(K')\neq\emptyset$ for any $f_i,f_j\in F'$.
So $H(K')$ is connected by $\bigcup_{f\in F'}f(\triangle)$ is connected,
then $K'$ is connected by \cite{Hata_1985}.

We assume that $(\textrm{\rmnum{1}})$ already hold for $F'$.
Denote $F'=\{f_i\}_{i=1}^N$ and $\Sigma=\{1,\ldots,N\}$.
Suppose that $a_i\in f_i(\triangle)$ for $i=1,2,3$.
Next, we view $K'$ as an attractor of a new fractal gasket IFS,
and this IFS satisfies $(\textrm{\rmnum{2}}), (\textrm{\rmnum{3}}), (\textrm{\rmnum{4}})$.
The construction of the new IFS is as follows:

Step 1: Denote $r^\ast=\max_{f_i\in F'}f'_i$ and $r'=\min_{i\in\{1,2,3\}}f'_i$,
where $f'_i$ denotes the derivative of $f_i$.
Let $k=\lfloor\log \frac{r'}{2}/\log r^\ast\rfloor$.
For any $f_i$ with $f_i(\triangle)$ is a private triangle in $F'(\triangle)$,
we replace $f_i$ with $\bigcup_{I\in\Sigma^k}f_i\circ f_I$.
This means that we have iterated over all the private triangles for $k$ times.
Then the edge length of the private triangles in the new IFS is strictly less than $r'/2$. By abusing of notations, we denote the new IFS as $F'$, then $(\textrm{\rmnum{2}})$ holds.

Step 2: At this time $F'$ satisfies $(\textrm{\rmnum{1}})$ and $(\textrm{\rmnum{2}})$.
We denote the number of the private triangles in $F'(\triangle)$ subordinate to the three sides $\overline{[a_1,a_2]}$, $\overline{[a_1,a_3]}$, and $\overline{[a_2,a_3]}$ of the triangle $\triangle$ as $p_1$, $p_2$, and $p_3$, respectively.
Without loss of generality, we assume that $p_1,p_2,p_3$ are not less than 1 (otherwise, we will consider $F'^2$).

Pick $f_{i_1}$ with $f_{i_1}(\triangle)$ is a private triangle subordinated to $\overline{[a_1,a_2]}$ in $F'(\triangle)$, we replace $f_{i_1}$ with $\bigcup_{i\in\Sigma}f_{i_1}\circ f_i$. Next,
we replace $f_{i_1}^{2}$ with $\bigcup_{i\in\Sigma}f_{i_1}^{2}\circ f_i$,
and repeat this process $(p_2+1)(p_3+1)-1$ times until we replace $f_{i_1}^{(p_2+1)(p_3+1)-1}$ with $\bigcup_{i\in\Sigma}f_{i_1}^{(p_2+1)(p_3+1)-1}\circ f_i$.
Then the edge $\overline{[a_1,a_2]}$ have $(p_1+1)(p_2+1)(p_3+1)-1$ private triangles with respect to the new IFS.

Similar treatment for edges $\overline{[a_1,a_3]}$ and $\overline{[a_2,a_3]}$.
That means we pick $f_{j_1}$ with $f_{j_1}(\triangle)$ is a private triangle subordinated to $\overline{[a_1,a_3]}$ in $F'(\triangle)$, replace $f_{j_1}$ with $\bigcup_{i\in\Sigma}f_{j_1}\circ f_i$ and repeat this process $(p_1+1)(p_3+1)-1$ times.
Pick $f_{k_1}$ with $f_{k_1}(\triangle)$ is a private triangle subordinated to $\overline{[a_2,a_3]}$ in $F'(\triangle)$, replace $f_{k_1}$ with $\bigcup_{i\in\Sigma}f_{k_1}\circ f_i$ and repeat this process $(p_1+1)(p_2+1)-1$ times.
Then the edges $\overline{[a_1,a_3]},\overline{[a_2,a_3]}$ also have $(p_1+1)(p_2+1)(p_3+1)-1$ private triangles with respect to the new IFS.

Take $N_0=(p_1+1)(p_2+1)(p_3+1)-1$.
By abusing of notations, we denote the new IFS as $F'$, then $(\textrm{\rmnum{3}})$ holds.

Step 3: At this time $F'$ satisfies $(\textrm{\rmnum{1}}), (\textrm{\rmnum{2}})$ and $(\textrm{\rmnum{3}})$. Let $k_0=\lfloor\log \frac{d_0}{N_0}/\log r^\ast\rfloor$.
For any $f_i$ with $f_i(\triangle)$ is a triangle in $F'_I(\triangle)$,
we replace $f_i$ with $\bigcup_{I\in\Sigma^{k_0}}f_i\circ f_I$.
This means that we have iterated over all the triangles in $F'_I(\triangle)$ for $k_0$ times.
Then the edge length (also the diameter) of the inner triangle in the new IFS is strictly less than $d_0/N_0$.
By abusing of notations, we denote the new IFS as $F'$, then $(\textrm{\rmnum{4}})$ holds.
\end{proof}

For the remainder of this section, the fractal gasket IFS $F=\{f_i\}_{i=1}^N$ will satisfies the $(\textrm{\rmnum{1}})$, $(\textrm{\rmnum{2}})$, $(\textrm{\rmnum{3}})$, $(\textrm{\rmnum{4}})$
in Lemma \ref{construct} and $a_i\in f_i(\triangle)$ for $i=1,2,3$.
For an illustration, see Figure \ref{fig910} $(a)$. Next, we define the vertex iteration of $F$.
\begin{figure}[h]
\centering
\begin{minipage}[t]{.45\textwidth}
\centering
\begin{tikzpicture}[xscale=1,yscale=1]
    \draw[xscale=5,yscale=5](0,0)--(1,0)--(0.5,0.866)--cycle;
    \draw[xscale=1,yscale=1][fill=red!50](0,0)--(1,0)--(0.5,0.866)--cycle;
    \draw[xscale=1,yscale=1][shift ={(2,3.464)}][fill=red!50](0,0)--(1,0)--(0.5,0.866)--cycle;
    \draw[shift ={(4,0)}][xscale=1,yscale=1][fill=red!50](0,0)--(1,0)--(0.5,0.866)--cycle;
     \draw[shift ={(1,0)}][xscale=1.5,yscale=1.5][fill=red!50](0,0)--(1,0)--(0.5,0.866)--cycle;
     \draw[shift ={(2.5,0)}][xscale=0.8333,yscale=0.8333][fill=red!50](0,0)--(1,0)--(0.5,0.866)--cycle;
    \draw[shift ={(3.3333,0)}][xscale=0.3333,yscale=0.3333][fill=red!50](0,0)--(1,0)--(0.5,0.866)--cycle;
    \draw[shift ={(3.6666,0)}][xscale=0.3333,yscale=0.3333][fill=red!50](0,0)--(1,0)--(0.5,0.866)--cycle;
    \draw[xscale=1,yscale=1][shift ={(1.5,2.598)}][fill=red!50](0,0)--(1,0)--(0.5,0.866)--cycle;
    \draw[xscale=1,yscale=1][shift ={(1,1.732)}][fill=red!50](0,0)--(1,0)--(0.5,0.866)--cycle;
    \draw[shift ={(0.5,0.866)}][xscale=0.5,yscale=0.5][fill=red!50](0,0)--(1,0)--(0.5,0.866)--cycle;
    \draw[shift ={(0.75,1.299)}][xscale=0.5,yscale=0.5][fill=red!50](0,0)--(1,0)--(0.5,0.866)--cycle;
    \draw[shift ={(2.625,2.8145)}][xscale=0.75,yscale=0.75][fill=red!50](0,0)--(1,0)--(0.5,0.866)--cycle;
    \draw[shift ={(2.625,1.5155)}][xscale=1.5,yscale=1.5][fill=red!50](0,0)--(1,0)--(0.5,0.866)--cycle;
    \draw[shift ={(4,1.299)}][xscale=0.25,yscale=0.25][fill=red!50](0,0)--(1,0)--(0.5,0.866)--cycle;
    \draw[shift ={(4,0.866)}][xscale=0.5,yscale=0.5][fill=red!50](0,0)--(1,0)--(0.5,0.866)--cycle;
    \draw[shift ={(3.5,0.866)}][xscale=0.5,yscale=0.5][fill=red!50](0,0)--(1,0)--(0.5,0.866)--cycle;
    \draw(-0.3,-0.3)node{$a_1$};\draw(5.3,-0.3)node{$a_2$};\draw(2.5,4.63)node{$a_3$};
    \draw(0.5,0.35)node{$f_1$};\draw(4.5,0.35)node{$f_2$};\draw(2.5,3.8)node{$f_3$};
    \end{tikzpicture}\caption{(a)}
\end{minipage}
\hfill
\begin{minipage}[t]{.48\textwidth}
\centering
\begin{tikzpicture}[xscale=1,yscale=1]
    \draw[xscale=5,yscale=5](0,0)--(1,0)--(0.5,0.866)--cycle;
    \draw[xscale=1,yscale=1](0,0)--(1,0)--(0.5,0.866)--cycle;
    \draw[xscale=1,yscale=1][shift ={(4,0)}](0,0)--(1,0)--(0.5,0.866)--cycle;
    \draw[xscale=1,yscale=1][shift ={(2,3.464)}](0,0)--(1,0)--(0.5,0.866)--cycle;
     \draw[xscale=1.5,yscale=1.5][shift ={(0.666,0)}][fill=red!50](0,0)--(1,0)--(0.5,0.866)--cycle;
     \draw[xscale=0.8333,yscale=0.8333][shift ={(3,0)}][fill=red!50](0,0)--(1,0)--(0.5,0.866)--cycle;
    \draw[xscale=0.3333,yscale=0.3333][shift ={(10,0)}][fill=red!50](0,0)--(1,0)--(0.5,0.866)--cycle;
    \draw[xscale=0.3333,yscale=0.3333][shift ={(11,0)}][fill=red!50](0,0)--(1,0)--(0.5,0.866)--cycle;
    \draw[xscale=1,yscale=1][shift ={(1.5,2.598)}][fill=red!50](0,0)--(1,0)--(0.5,0.866)--cycle;
    \draw[xscale=1,yscale=1][shift ={(1,1.732)}][fill=red!50](0,0)--(1,0)--(0.5,0.866)--cycle;
    \draw[xscale=0.5,yscale=0.5][shift ={(1,1.732)}][fill=red!50](0,0)--(1,0)--(0.5,0.866)--cycle;
    \draw[xscale=0.5,yscale=0.5][shift ={(1.5,2.598)}][fill=red!50](0,0)--(1,0)--(0.5,0.866)--cycle;
    \draw[xscale=0.75,yscale=0.75][shift ={(3.5,3.753)}][fill=red!50](0,0)--(1,0)--(0.5,0.866)--cycle;
    \draw[xscale=1.5,yscale=1.5][shift ={(1.75,1)}][fill=red!50](0,0)--(1,0)--(0.5,0.866)--cycle;
    \draw[xscale=0.25,yscale=0.25][shift ={(16,5.196)}][fill=red!50](0,0)--(1,0)--(0.5,0.866)--cycle;
    \draw[xscale=0.5,yscale=0.5][shift ={(8,1.732)}][fill=red!50](0,0)--(1,0)--(0.5,0.866)--cycle;
    \draw[xscale=0.5,yscale=0.5][shift ={(7,1.732)}][fill=red!50](0,0)--(1,0)--(0.5,0.866)--cycle;
    \draw(-0.3,-0.3)node{$a_1$};\draw(5.3,-0.3)node{$a_2$};\draw(2.5,4.63)node{$a_3$};

    \draw[xscale=0.2,yscale=0.2][fill=red!50](0,0)--(1,0)--(0.5,0.866)--cycle;
    \draw[xscale=0.2,yscale=0.2][shift ={(2,3.464)}][fill=red!50](0,0)--(1,0)--(0.5,0.866)--cycle;
    \draw[xscale=0.2,yscale=0.2][shift ={(4,0)}][fill=red!50](0,0)--(1,0)--(0.5,0.866)--cycle;
     \draw[xscale=0.3,yscale=0.3][shift ={(0.666,0)}][fill=red!50](0,0)--(1,0)--(0.5,0.866)--cycle;
     \draw[xscale=0.16666,yscale=0.16666][shift ={(3,0)}][fill=red!50](0,0)--(1,0)--(0.5,0.866)--cycle;
    \draw[xscale=0.06666,yscale=0.06666][shift ={(10,0)}][fill=red!50](0,0)--(1,0)--(0.5,0.866)--cycle;
    \draw[xscale=0.06666,yscale=0.06666][shift ={(11,0)}][fill=red!50](0,0)--(1,0)--(0.5,0.866)--cycle;
    \draw[xscale=0.2,yscale=0.2][shift ={(1.5,2.598)}][fill=red!50](0,0)--(1,0)--(0.5,0.866)--cycle;
    \draw[xscale=0.2,yscale=0.2][shift ={(1,1.732)}][fill=red!50](0,0)--(1,0)--(0.5,0.866)--cycle;
    \draw[xscale=0.1,yscale=0.1][shift ={(1,1.732)}][fill=red!50](0,0)--(1,0)--(0.5,0.866)--cycle;
    \draw[xscale=0.1,yscale=0.1][shift ={(1.5,2.598)}][fill=red!50](0,0)--(1,0)--(0.5,0.866)--cycle;
    \draw[xscale=0.15,yscale=0.15][shift ={(3.5,3.753)}][fill=red!50](0,0)--(1,0)--(0.5,0.866)--cycle;
    \draw[xscale=0.3,yscale=0.3][shift ={(1.75,1)}][fill=red!50](0,0)--(1,0)--(0.5,0.866)--cycle;
    \draw[xscale=0.05,yscale=0.05][shift ={(16,5.196)}][fill=red!50](0,0)--(1,0)--(0.5,0.866)--cycle;
    \draw[xscale=0.1,yscale=0.1][shift ={(8,1.732)}][fill=red!50](0,0)--(1,0)--(0.5,0.866)--cycle;
    \draw[xscale=0.1,yscale=0.1][shift ={(7,1.732)}][fill=red!50](0,0)--(1,0)--(0.5,0.866)--cycle;

    \draw[shift ={(4,0)}][xscale=0.2,yscale=0.2][fill=red!50](0,0)--(1,0)--(0.5,0.866)--cycle;
    \draw[shift ={(4,0)}][xscale=0.2,yscale=0.2][shift ={(2,3.464)}][fill=red!50](0,0)--(1,0)--(0.5,0.866)--cycle;
    \draw[shift ={(4,0)}][xscale=0.2,yscale=0.2][shift ={(4,0)}][fill=red!50](0,0)--(1,0)--(0.5,0.866)--cycle;
     \draw[shift ={(4,0)}][xscale=0.3,yscale=0.3][shift ={(0.666,0)}][fill=red!50](0,0)--(1,0)--(0.5,0.866)--cycle;
     \draw[shift ={(4,0)}][xscale=0.16666,yscale=0.16666][shift ={(3,0)}][fill=red!50](0,0)--(1,0)--(0.5,0.866)--cycle;
    \draw[shift ={(4,0)}][xscale=0.06666,yscale=0.06666][shift ={(10,0)}][fill=red!50](0,0)--(1,0)--(0.5,0.866)--cycle;
    \draw[shift ={(4,0)}][xscale=0.06666,yscale=0.06666][shift ={(11,0)}][fill=red!50](0,0)--(1,0)--(0.5,0.866)--cycle;
    \draw[shift ={(4,0)}][xscale=0.2,yscale=0.2][shift ={(1.5,2.598)}][fill=red!50](0,0)--(1,0)--(0.5,0.866)--cycle;
    \draw[shift ={(4,0)}][xscale=0.2,yscale=0.2][shift ={(1,1.732)}][fill=red!50](0,0)--(1,0)--(0.5,0.866)--cycle;
    \draw[shift ={(4,0)}][xscale=0.1,yscale=0.1][shift ={(1,1.732)}][fill=red!50](0,0)--(1,0)--(0.5,0.866)--cycle;
    \draw[shift ={(4,0)}][xscale=0.1,yscale=0.1][shift ={(1.5,2.598)}][fill=red!50](0,0)--(1,0)--(0.5,0.866)--cycle;
    \draw[shift ={(4,0)}][xscale=0.15,yscale=0.15][shift ={(3.5,3.753)}][fill=red!50](0,0)--(1,0)--(0.5,0.866)--cycle;
    \draw[shift ={(4,0)}][xscale=0.3,yscale=0.3][shift ={(1.75,1)}][fill=red!50](0,0)--(1,0)--(0.5,0.866)--cycle;
    \draw[shift ={(4,0)}][xscale=0.05,yscale=0.05][shift ={(16,5.196)}][fill=red!50](0,0)--(1,0)--(0.5,0.866)--cycle;
    \draw[shift ={(4,0)}][xscale=0.1,yscale=0.1][shift ={(8,1.732)}][fill=red!50](0,0)--(1,0)--(0.5,0.866)--cycle;
    \draw[shift ={(4,0)}][xscale=0.1,yscale=0.1][shift ={(7,1.732)}][fill=red!50](0,0)--(1,0)--(0.5,0.866)--cycle;

    \draw[shift ={(2,3.464)}][xscale=0.2,yscale=0.2][fill=red!50](0,0)--(1,0)--(0.5,0.866)--cycle;
    \draw[shift ={(2,3.464)}][xscale=0.2,yscale=0.2][shift ={(2,3.464)}][fill=red!50](0,0)--(1,0)--(0.5,0.866)--cycle;
    \draw[shift ={(2,3.464)}][xscale=0.2,yscale=0.2][shift ={(4,0)}][fill=red!50](0,0)--(1,0)--(0.5,0.866)--cycle;
     \draw[shift ={(2,3.464)}][xscale=0.3,yscale=0.3][shift ={(0.666,0)}][fill=red!50](0,0)--(1,0)--(0.5,0.866)--cycle;
     \draw[shift ={(2,3.464)}][xscale=0.16666,yscale=0.16666][shift ={(3,0)}][fill=red!50](0,0)--(1,0)--(0.5,0.866)--cycle;
    \draw[shift ={(2,3.464)}][xscale=0.06666,yscale=0.06666][shift ={(10,0)}][fill=red!50](0,0)--(1,0)--(0.5,0.866)--cycle;
    \draw[shift ={(2,3.464)}][xscale=0.06666,yscale=0.06666][shift ={(11,0)}][fill=red!50](0,0)--(1,0)--(0.5,0.866)--cycle;
    \draw[shift ={(2,3.464)}][xscale=0.2,yscale=0.2][shift ={(1.5,2.598)}][fill=red!50](0,0)--(1,0)--(0.5,0.866)--cycle;
    \draw[shift ={(2,3.464)}][xscale=0.2,yscale=0.2][shift ={(1,1.732)}][fill=red!50](0,0)--(1,0)--(0.5,0.866)--cycle;
    \draw[shift ={(2,3.464)}][xscale=0.1,yscale=0.1][shift ={(1,1.732)}][fill=red!50](0,0)--(1,0)--(0.5,0.866)--cycle;
    \draw[shift ={(2,3.464)}][xscale=0.1,yscale=0.1][shift ={(1.5,2.598)}][fill=red!50](0,0)--(1,0)--(0.5,0.866)--cycle;
    \draw[shift ={(2,3.464)}][xscale=0.15,yscale=0.15][shift ={(3.5,3.753)}][fill=red!50](0,0)--(1,0)--(0.5,0.866)--cycle;
    \draw[shift ={(2,3.464)}][xscale=0.3,yscale=0.3][shift ={(1.75,1)}][fill=red!50](0,0)--(1,0)--(0.5,0.866)--cycle;
    \draw[shift ={(2,3.464)}][xscale=0.05,yscale=0.05][shift ={(16,5.196)}][fill=red!50](0,0)--(1,0)--(0.5,0.866)--cycle;
    \draw[shift ={(2,3.464)}][xscale=0.1,yscale=0.1][shift ={(8,1.732)}][fill=red!50](0,0)--(1,0)--(0.5,0.866)--cycle;
    \draw[shift ={(2,3.464)}][xscale=0.1,yscale=0.1][shift ={(7,1.732)}][fill=red!50](0,0)--(1,0)--(0.5,0.866)--cycle;

    \draw(0,0)[fill=black]circle (0.03);\draw(5,0)[fill=black]circle (0.03);\draw(2.5,4.35)[fill=black]circle (0.03);
    \draw(1,0)[fill=black]circle (0.03);\draw(0.5,0.866)[fill=black]circle (0.03);
    \draw(1,-0.3)node{\tiny $f_1(a_2)$};\draw(0.1,1.1)node{\tiny $f_1(a_3)$};
    \draw(2,3.464)[fill=black]circle (0.03);\draw(3,3.464)[fill=black]circle (0.03);
    \draw(1.5,3.464)node{\tiny $f_3(a_1)$};\draw(3.5,3.464)node{\tiny $f_3(a_2)$};
    \draw(3.05,3.05)node{\tiny $T_1$};\draw(2,2.9)node{\tiny $T_2$};
    \draw(1.8,0.5)node{\tiny $T_3$};
    \end{tikzpicture}\caption{(b)}
\end{minipage}%
\caption{An example of the $F(\triangle)$ and $F_1(\triangle)$}
\label{fig910}
\end{figure}
Fix a positive integer $m\in\mathbb{N}^*$. Denote $\Sigma=\{1,\ldots,N\}$.
We replace $f_1$ with $\bigcup_{i\in\Sigma}f_1\circ f_i$.
Next, we replace $f_1^2$ with $\bigcup_{i\in\Sigma}f_1^2\circ f_i$,
and repeat this process $m$ times until we replace $f_1^m$ with $\bigcup_{i\in\Sigma}f_1^m\circ f_i$.
Similar treatment for $f_2$ and $f_3$.
Then we get a new IFS with attractor $K$, i.e.
$$F_m=\{f_i;i\neq 1,2,3\}\cup\{f_i^{(m+1)};i=1,2,3\}\cup
\bigcup_{i=1}^3\bigcup_{\ell=1}^m\{f_i^\ell\circ f_k;k=1,\ldots,N \ \text{and} \ k\neq i\}.$$
We call $F_m$ the \emph{m-level vertex iteration} of $F$.
For $i\in\{1,2,3\}$, we call the sub-IFS
$$V_i=f_i^{(m+1)}\cup\bigcup_{\ell=1}^m\{f_i^\ell\circ f_k;k=1,\ldots,N \ \text{and} \ k\neq i\}$$
the \emph{iteration component} of $f_i$.

Let $F_m$ be the $m-$level vertex iteration of $F$, and let $V_1, V_2, V_3$ be the iteration component of $f_1,f_2,f_3$ respectively, Figure \ref{fig910} $(b)$ shows the images of $\triangle$ under the mappings in $F_1$.
Let $G_0=\{\overline{a_ia_j};1\leq i,j\leq 3\}$ be the complete graph with vertex set $\{a_1,a_2,a_3\}$.
We set
\begin{equation}\label{123}
\tau_0(\overline{a_1a_2})=\tau_0(\overline{a_1a_3})=\tau_0(\overline{a_2a_3})=1.
\end{equation}
Since each edge of $\triangle$ has $N_0$ private triangles in $F(\triangle)$, there are $C_m=(2N_0+2)m+N_0+2$ basic triangles in $F_m(\triangle)$ intersected by each edge of $\triangle$.
Let $T_1,T_2,T_3$ be the basic triangles in $\{g(\triangle)\}_{g\in F_m\backslash(V_1\cup V_2\cup V_3)}$ containing points $f_3(a_2),f_3(a_1),f_1(a_2)$ respectively, see Figure \ref{fig910} $(b)$.
Pick $s>\frac{\log C_m}{(-m-1)\log r_0}$, where $r_0=\max_{i\in\{1,2,3\}}r_i$.
Take $g\in F_m$, we set

\begin{equation}\label{gi}
R(g)= \begin{cases} r_i^{(m+1)s},& \text{if} \ g=f_i^{(m+1)}, i\in \{1, 2, 3\}; \\ r_j^{(m+1)s},& \text{if} \ g(\triangle)=T_j, j\in \{1, 2, 3\}; \\ \frac{1-r_1^{(m+1)s}-r_2^{(m+1)s}-r_3^{(m+1)s}}{C_m-3},& \text{otherwise}. \end{cases}
\end{equation}

\noindent For convenience, we will denote $W_{m,s}=\frac{1-r_1^{(m+1)s}-r_2^{(m+1)s}-r_3^{(m+1)s}}{C_m-3}$.
Clearly, we have $W_{m,s}>r_i^{(m+1)s}$ for any $i\in\{1,2,3\}$ by $s>\frac{\log C_m}{(-m-1)\log r_0}$.

\begin{lem}\label{subpaths}
Let $\Gamma$ be a connected graph. Let $\Gamma_1,\Gamma_2$ be two connected subgraphs of $\Gamma$ such that $\Gamma_1\cup\Gamma_2=\Gamma$ and $\Gamma_1,\Gamma_2$ have only two common vertices, denote by $\{a,b\}$.
If $P$ is a path in $\Gamma$ and the origin (terminus) of $P$ is the vertex in $\Gamma_1$ ($\Gamma_2$) that is different from $\{a,b\}$, then the subgraphs decompose the path into $2$ sub-paths, i.e. $$P=P_1+P_2,$$
where $P_1\subset\Gamma_1$, $P_2\subset\Gamma_2$, the terminus of $P_1$ and the origin of $P_2$ belongs to $\{a,b\}$.
\end{lem}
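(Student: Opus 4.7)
The plan is to classify the edges of $P$ according to which subgraph contains them, and then to count transitions between the classes. Since $\Gamma_1\cup\Gamma_2=\Gamma$, every edge of $P$ belongs to $\Gamma_1$ or to $\Gamma_2$ (the edge $\overline{ab}$, if it happens to be in $P$, belongs to both; assign it arbitrarily to one of the two for the purposes of the argument). Crucially, any vertex of $P$ outside $\{a,b\}$ lies in exactly one of $\Gamma_1,\Gamma_2$, so the two edges of $P$ incident to such a vertex must belong to the same subgraph. I would then call a vertex of $P$ a \emph{switch} if the edges of $P$ immediately before and after it lie in different subgraphs; the preceding observation forces every switch to belong to $\Gamma_1\cap\Gamma_2=\{a,b\}$.

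Next, I would use the path hypothesis to bound the number of switches. Since $P$ is a path, its vertices are distinct, so each of $a$ and $b$ is visited at most once; hence $P$ has at most two switches. On the other hand, the origin of $P$ lies in $\Gamma_1\setminus\{a,b\}$, so the first edge of $P$ lies in $\Gamma_1$ (and not in $\Gamma_2$), and similarly the last edge lies in $\Gamma_2$ (and not in $\Gamma_1$). Therefore the number of switches must be odd. Combining the two bounds, the number of switches equals exactly one.

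Let $c\in\{a,b\}$ be this unique switch. Then $P$ decomposes as $P=P_1+P_2$, where $P_1$ consists of the portion of $P$ from its origin up to $c$ (all of whose edges lie in $\Gamma_1$, hence $P_1\subset\Gamma_1$), and $P_2$ consists of the portion from $c$ to the terminus (all of whose edges lie in $\Gamma_2$, hence $P_2\subset\Gamma_2$). Both the terminus of $P_1$ and the origin of $P_2$ equal $c\in\{a,b\}$, which is the desired conclusion.

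The only mild subtlety will be the treatment of the edge $\overline{ab}$ if it actually appears inside $P$: it can be assigned to either subgraph without affecting the rest of the argument, since both of its endpoints already lie in $\{a,b\}$, and the counting of switches remains independent of that choice. Otherwise, the proof is essentially combinatorial bookkeeping, and I expect no further technical obstacle.
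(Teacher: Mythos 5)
Your proof is correct. It takes a genuinely different, and in my view cleaner, route than the paper's. The paper cuts $P$ at its first and last visits to $\{a,b\}$, obtaining pieces $P(o),P(m),P(t)$, argues $P(o)\subset\Gamma_1$ and $P(t)\subset\Gamma_2$, and then disposes of the middle piece $P(m)$ by a case analysis (empty, contained in one subgraph, or forcing a repeated vertex). You instead classify each edge of $P$ by subgraph, observe that a ``switch'' can occur only at a vertex of $\Gamma_1\cap\Gamma_2=\{a,b\}$, and combine the upper bound of two switches (one per vertex of $\{a,b\}$) with the parity constraint (first edge in $\Gamma_1$ only, last edge in $\Gamma_2$ only, hence an odd number of switches) to conclude there is exactly one. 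Both arguments ultimately rest on the same key observation, namely that a vertex of $P$ outside $\{a,b\}$ lies in exactly one of $\Gamma_1,\Gamma_2$ and therefore both of its incident edges in $P$ lie in that same subgraph; the paper invokes this implicitly when asserting ``Clearly $P(o)\subset\Gamma_1$'', while you make it explicit. Your parity bookkeeping is a nice way to avoid the paper's three-piece case split.

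One small imprecision: you write that the edge $\overline{ab}$, if it appears in $P$, ``belongs to both'' $\Gamma_1$ and $\Gamma_2$. That is not guaranteed by the hypotheses, which only say the two subgraphs share the vertices $a$ and $b$, not necessarily the edge between them. However, this does not affect your argument: if $\overline{ab}$ lies in only one subgraph, then its class is unambiguous and no arbitrary assignment is needed; the arbitrary choice is only invoked precisely when the edge happens to lie in both, which is exactly the case where either choice is harmless.
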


Let $F^*$ be a sub-IFS of $F_m$
and let $G_{F^*}=\bigcup_{g\in F^*}g(G_0)$ be a sub-graph of $G_n$.
we denote by $D_{F^*}$ the metric on $\bigcup_{g\in F^*}g(P)$.
We call that a path $P$ passes through a triangle if one side of the triangle belongs to $P$.

\begin{lem}\label{ab}
Let $F^*=F_m\backslash(V_1\cup V_2\cup V_3)$ be the sub-IFS of $F_m$
and let $G_{F^*}=\bigcup_{g\in F^*}g(G_0)$ be a sub-graph of $G_n$.
If $(\tau_0,R)$ are defined in \eqref{123} and \eqref{gi}, then
\begin{equation}\label{D1Fab}
D_{F^*}\big(a,b\big)\geq(N_0-1)\times W_{m,s}+r_2^{(m+1)s},
\end{equation}
where $a\in\{f_1(a_2),f_1(a_3)\}$ and $b\in\{f_3(a_1),f_3(a_2)\}$.
In particular, $$D_{F^*}\big(f_1(a_3),f_3(a_1)\big)=(N_0-1)\times W_{m,s}+r_2^{(m+1)s}.$$
\end{lem}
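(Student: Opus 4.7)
My plan is to split the lemma into two pieces: an upper bound witnessed by an explicit path (giving the equality in the ``in particular'' clause), and the lower bound for every admissible pair $(a,b)$, which is the main content.

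For the explicit path witnessing the equality, I would use the obvious one that stays on the edge $\overline{a_1 a_3}$ of $\triangle$: between $f_1(a_3)$ and $f_3(a_1)$ there are exactly $N_0$ private triangles of $F$ subordinate to $\overline{a_1 a_3}$ (by conditions (ii)--(iii) of Lemma~\ref{construct}), and the path uses only the side of each such triangle that lies on $\overline{a_1 a_3}$. One of these $N_0$ triangles is $T_2$ (the one containing $f_3(a_1)$ by definition), contributing $r_2^{(m+1)s}$, and the other $N_0-1$ are ordinary triangles of $F^*$, contributing $W_{m,s}$ each; the total is $(N_0-1)W_{m,s}+r_2^{(m+1)s}$.

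For the lower bound, let $P$ be any path in $G_{F^*}$ from $a$ to $b$, and write $\mathcal{T}(P)\subset F^*=F\setminus\{f_1,f_2,f_3\}$ for the set of maps $g$ such that at least one edge of $P$ lies in $g(G_0)$. Since every edge in $g(G_0)$ has weight $R(g)$, one has $\tau_m(P)\geq \sum_{g\in\mathcal{T}(P)}R(g)$, so it suffices to show
$$
\sum_{g\in\mathcal{T}(P)}R(g)\ \geq\ (N_0-1)W_{m,s}+r_2^{(m+1)s}.
$$
I would prove this by first establishing the cardinality bound $|\mathcal{T}(P)|\geq N_0$, and then handling the possible occurrences of the ``cheap'' triangles $T_1,T_2,T_3$.

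The cardinality bound is the heart of the argument, and I would prove it by viewing $P$ as a walk in the adjacency graph of basic triangles in $F^*$ (two triangles adjacent iff they share a vertex). The chain of $N_0$ private triangles of $F$ along $\overline{a_1 a_3}$ is the canonical $(N_0-1)$-step route from any triangle containing a point of $\{f_1(a_2),f_1(a_3)\}$ to any triangle containing a point of $\{f_3(a_1),f_3(a_2)\}$. Any alternative shortcut through the interior of $\triangle$ must connect two distinct inner vertices belonging to $I_F$, whose Euclidean distance is at least $d_0$, using only inner triangles of $F$, each of diameter strictly less than $d_0/N_0$ by condition (iv) of Lemma~\ref{construct}; the triangle inequality then forces the shortcut to use strictly more than $N_0$ inner triangles, so $|\mathcal{T}(P)|\geq N_0+1$ in that case. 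Either way $|\mathcal{T}(P)|\geq N_0$.

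Once this is available, I would finish by case analysis on $\mathcal{T}(P)\cap\{T_1,T_2,T_3\}$. If this intersection is empty or equals $\{T_2\}$ the conclusion is immediate from $W_{m,s}\geq r_j^{(m+1)s}$ for each $j$. If $T_1$ or $T_3$ appears, the path is forced to leave a neighborhood of $\overline{a_1 a_3}$ and reach $\overline{a_2 a_3}$ or $\overline{a_1 a_2}$; the same diameter-versus-$d_0$ inequality compels $|\mathcal{T}(P)|$ to strictly exceed $N_0$, and the resulting extra $W_{m,s}$ absorbs the discount $W_{m,s}-r_j^{(m+1)s}>0$ that one would otherwise ``gain'' by swapping an ordinary triangle for $T_j$.

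The main obstacle will be the cardinality bound $|\mathcal{T}(P)|\geq N_0$, and in particular its refinement when $T_1$ or $T_3$ appears. The delicate step is translating walks in the abstract adjacency graph of triangles into Euclidean estimates; the diameter inequality from Lemma~\ref{construct}(iv) together with the $d_0$-separation of $I_F$ is the critical tool, and once one is comfortable applying it along ``shortcut segments'' of the walk the rest of the proof is bookkeeping.
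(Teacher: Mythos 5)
Your overall strategy matches the paper's: produce the explicit boundary path along $\overline{[f_1(a_3),f_3(a_1)]}$ for the equality, and for the lower bound distinguish between paths that stay on one side of $\partial\triangle$ and paths that cut through the interior, using Lemma~\ref{construct}(\textrm{\rmnum{2}}) to confine the first kind to the side $\overline{a_1a_3}$ and Lemma~\ref{construct}(\textrm{\rmnum{4}}) together with the $d_0$-separation of $I_F$ to force the second kind to traverse more than $N_0$ inner triangles. That much is correct and is exactly what the paper does.

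Where you diverge is in how you close the argument, and there is a genuine gap. You propose to establish the global cardinality bound $|\mathcal{T}(P)|\geq N_0$ and then case-analyze on $\mathcal{T}(P)\cap\{T_1,T_2,T_3\}$, claiming that when $T_1$ or $T_3$ occurs the single extra $W_{m,s}$ you get from $|\mathcal{T}(P)|\geq N_0+1$ absorbs the discount $W_{m,s}-r_j^{(m+1)s}$. But one extra $W_{m,s}$ pays for only one such discount. Nothing in your argument rules out, for instance, $\{T_1,T_2,T_3\}\subset\mathcal{T}(P)$ with $|\mathcal{T}(P)|=N_0+1$; in that scenario the bound you extract is $(N_0-2)W_{m,s}+r_1^{(m+1)s}+r_2^{(m+1)s}+r_3^{(m+1)s}$, and since \eqref{gi} only guarantees $W_{m,s}>r_j^{(m+1)s}$, you cannot conclude $r_1^{(m+1)s}+r_3^{(m+1)s}\geq W_{m,s}$. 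You would have to additionally show that visiting two extra sides of $\triangle$ forces at least two independent shortcuts and hence a correspondingly larger $|\mathcal{T}(P)|$, which you do not do and which is genuinely more delicate than the rest of the argument. The paper avoids this entirely: once any edge of $P$ lies in an inner triangle, it extracts a single sub-path joining two distinct points of $I_F$ consisting solely of inner-triangle edges, notes that this sub-path \emph{alone} has weight $>N_0W_{m,s}$, and observes that $N_0W_{m,s}>(N_0-1)W_{m,s}+r_2^{(m+1)s}$. This local bound is oblivious to how many of $T_1,T_2,T_3$ the rest of $P$ touches, so no case analysis on them is needed. I would recommend replacing your global counting-plus-discount bookkeeping with this local sub-path estimate; it both closes the gap and shortens the proof.
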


\begin{proof}
Let $a\in\{f_1(a_2),f_1(a_3)\}$ and $b\in\{f_3(a_1),f_3(a_2)\}$.
Pick a path $P$ joining $a$, $b$ in $G_{1,F^*}$.

If none of edges of $P$ belong to the triangles in $F_I(\triangle)$,
then $P$ can only be a path joining $f_1(a_3)$ and $f_3(a_1)$.
Since $\overline{[f_1(a_3),f_3(a_1)]}$ has $N_0$ private triangles and one of them is $T_2$, so the weight of $P$ is not less than $(N_0-1)\times W_{m,s}+r_2^{(m+1)s}$ by \eqref{123} and \eqref{gi}.

Otherwise, there exists a sub-path of $P$ joining two different points in $I_F$, and each edge belongs to a triangle in $F_I(\triangle)$.
Since the distance between two different points in $I_F$ is not less than $d_0$, and
the diameter of the triangle in $F_I(\triangle)$ is strictly less than $d_0/N_0$ by the Lemma \ref{construct} $(\textrm{\rmnum{4}})$, then the sub-path passes through at least $N_0$ basic triangles. By \eqref{123} and \eqref{gi}, the edges in the sub-path have the same weight $W_{m,s}$.
So, we have the weight of $P$ is bigger than $N_0\times W_{m,s}$. Since $s>\frac{\log C_m}{(-m-1)\log r_0}$, we have $N_0\times W_{m,s}>(N_0-1)\times W_{m,s}+r_2^{(m+1)s}$,
thus \eqref{D1Fab} holds.
\end{proof}

\begin{lem}\label{V3}
Let $V_3$ be the iteration component of $f_3$
and let $G_{V_3}=\bigcup_{g\in V_3}g(G_0)$ be a sub-graph of $G_n$.
If $(\tau_0,R)$ are defined in \eqref{123} and \eqref{gi}, then

(1) $D_{V_3}\big(a_3,f_3(a_1)\big)=D_{V_3}\big(a_3,f_3(a_2)\big)= r_3^{(m+1)s}+(N_0m+m)\times W_{m,s}$,

(2) $D_{V_3}\big(f_3(a_1),f_3(a_2)\big)=(N_0+2)\times W_{m,s}$.
\end{lem}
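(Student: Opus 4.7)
The plan is to exhibit explicit paths realizing the upper bounds in both parts, and then establish matching lower bounds by decomposing $V_3$ into nested layers and bounding each layer's contribution separately. For part~(2), the candidate path from $f_3(a_1)$ to $f_3(a_2)$ walks along the bottom edge $\overline{[f_3(a_1),f_3(a_2)]}$ of $f_3(\triangle)$, passing through the $N_0$ private triangles subordinated to this edge together with the two vertex triangles $f_3\circ f_1(\triangle)$ and $f_3\circ f_2(\triangle)$; each of the $N_0+2$ maps involved has weight $W_{m,s}$ by \eqref{gi} and contributes a single edge, yielding total weight $(N_0+2)W_{m,s}$. For part~(1), the candidate path uses one edge of $f_3^{m+1}(G_0)$ to jump from $a_3=f_3^{m+1}(a_3)$ to $f_3^{m+1}(a_1)$ at cost $r_3^{(m+1)s}$, then for each $\ell=m,m-1,\ldots,1$ crosses the annulus $f_3^\ell(\triangle)\setminus f_3^{\ell+1}(\triangle)^\circ$ along the edge $\overline{[a_3,f_3^\ell(a_1)]}$ through the $N_0$ private triangles on that edge followed by the vertex triangle $f_3^\ell\circ f_1(\triangle)$, spending $N_0+1$ edges of weight $W_{m,s}$ per layer, for a grand total of $r_3^{(m+1)s}+m(N_0+1)W_{m,s}$.

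For the lower bounds, I introduce a layer structure: layer $m+1$ is the innermost triangle $f_3^{m+1}(\triangle)$, while for $1\leq\ell\leq m$ layer~$\ell$ is the union of the $N-1$ triangles $\{f_3^\ell\circ f_k(\triangle):k\neq 3\}$, which tile the annular region $f_3^\ell(\triangle)\setminus f_3^{\ell+1}(\triangle)^\circ$. Since $a_3\notin f_k(\triangle)$ for any $k\neq 3$, the only vertices shared between consecutive layers $\ell$ and $\ell+1$ (for $\ell\leq m$) are the two points $f_3^{\ell+1}(a_1)$ and $f_3^{\ell+1}(a_2)$. Iterated application of Lemma~\ref{subpaths} therefore decomposes any path from $a_3$ to $f_3(a_1)$ in $G_{V_3}$ into subpaths $P_{m+1}+P_m+\cdots+P_1$, one per layer. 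The innermost subpath must contain at least one non-trivial edge of $f_3^{m+1}(G_0)$, hence contributes at least $r_3^{(m+1)s}$; each layer-$\ell$ subpath $P_\ell$ connects an inner-boundary vertex in $\{f_3^{\ell+1}(a_1),f_3^{\ell+1}(a_2)\}$ to an outer-boundary vertex in $\{f_3^\ell(a_1),f_3^\ell(a_2)\}$ (or to $f_3(a_1)$ when $\ell=1$). Viewing layer~$\ell$ as $f_3^\ell$ applied to the sub-IFS $F\setminus\{f_3\}$, an argument parallel to Lemma~\ref{ab} shows that a same-side crossing requires at least $N_0+1$ edges of weight $W_{m,s}$ while an opposite-side crossing requires at least $2N_0+2$ such edges, so optimization forces the same-side $a_1$-descent and the lower bound for part~(1) follows. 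Part~(2) is the single-layer specialization to layer~$1$ with both endpoints on the outer boundary, and the analogous analysis (now entering both vertex triangles $f_3\circ f_1,\ f_3\circ f_2$) yields the lower bound $(N_0+2)W_{m,s}$.

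The principal obstacle will be making the per-layer step rigorous, namely proving that within each annular layer no detour through the inner triangles of $F_I(\triangle)$ can shorten a crossing. This is where condition~(iv) of Lemma~\ref{construct} becomes indispensable: the diameter bound $<d_0/N_0$ on inner triangles forces any interior route to traverse strictly more than $N_0$ inner-triangle edges, each of weight $W_{m,s}$, before reconnecting to the boundary of $f_3^\ell(\triangle)$; combined with the edge-weight uniformity $W_{m,s}$ inside a layer and the inequality $W_{m,s}>r_3^{(m+1)s}$, this shows that every interior detour, as well as every excursion into the deeper layers via $f_3^{m+1}(G_0)$, is strictly more expensive than the straight along-the-edge path used by the realizing candidates.
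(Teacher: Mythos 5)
Your approach is essentially the paper's: iterated application of Lemma~\ref{subpaths} to the nested layers $\Gamma_{m+1},\ldots,\Gamma_1$ decomposes any path into per-layer subpaths, the innermost contributes at least $r_3^{(m+1)s}$, and each annular crossing is bounded below by $N_0+1$ edges of uniform weight $W_{m,s}$ via Lemma~\ref{construct}(iv); your same-side/opposite-side dichotomy is a harmless elaboration, since the paper simply uses the uniform bound $N_0+1$ per layer, which already suffices because $2N_0+2\geq N_0+1$. One detail your sketch glosses over in part~(2): the phrase ``single-layer specialization to layer~$1$'' tacitly assumes the competing path stays inside $\Gamma_1$, whereas the paper separately treats a path that dips into $\bigcup_{\ell\geq 2}\Gamma_\ell$ — such a path, being a simple path with both endpoints in $\Gamma_1\setminus\{f_3^2(a_1),f_3^2(a_2)\}$, must pass through \emph{both} $f_3^2(a_1)$ and $f_3^2(a_2)$, giving two layer-$1$ crossings of at least $N_0+1$ edges each and hence weight $\geq(2N_0+2)W_{m,s}>(N_0+2)W_{m,s}$ — and this case should be stated explicitly to make the lower bound in~(2) complete.
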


\begin{proof}
First we prove the first assertion.

Pick a path $P$ joining vertex $a_3$ and $f_3(a_1)$ (or $f_3(a_2)$) in $G_{V_3}$.
By Lemma \ref{subpaths}, we have the subgraphs $\Gamma_1=\bigcup_{k\in\Sigma\backslash \{3\}} f_3\circ f_k(G_0)$, $\Gamma_2=\bigcup_{k\in\Sigma\backslash \{3\}} f_3^{2}\circ f_k(G_0)$, \ldots, $\Gamma_m=\bigcup_{k\in\Sigma\backslash \{3\}} f_3^{m}\circ f_k(G_0)$, $\Gamma_{m+1}=f_3^{m+1}(G_0)$ decompose the path $P$ into $m+1$ sub-paths,
denote by $$P=P_1+P_2+\cdots+P_m+P_{m+1},$$
where $P_\ell\subset\Gamma_\ell$, and the origin of $P_\ell$ belongs to $\{f_3^{\ell}(a_1),f_3^{\ell}(a_2)\}$, the terminus of $P_\ell$ belongs to $\{f_3^{\ell+1}(a_1),f_3^{\ell+1}(a_2)\}$, $1\leq\ell\leq m$.

Next, we prove that the number of edges of $P_\ell$ is not less than $N_0+1$ for any $1\leq\ell\leq m$. If none of edges of $P_\ell$ belong to the triangles in $f_3^\ell(F_I(\triangle))$, then $P_\ell$ passes through at least $N_0+1$ basic triangles. Otherwise, there exists a sub-path of $P_\ell$ joining two different points in $f_3^\ell(I_F)$, and each edge belongs to a triangle in $f_3^\ell(F_I(\triangle))$.
Notice that the distance between two different points in $f_3^\ell(I_F)$ is not less than $d_0r_3^\ell$, and the diameter of the triangle in $f_3^\ell(F_I(\triangle))$ is strictly less than $d_0r_3^\ell/N_0$ by the Lemma \ref{construct} $(\textrm{\rmnum{4}})$, then the sub-path of $P_\ell$ passes through at least $N_0$ basic triangles. Thus the number of edges of $P_\ell$ is not less than $N_0+1$.

By \eqref{123} and \eqref{gi}, the edges in $P_\ell$, $1\leq\ell\leq m$, have the same weight $W_{m,s}$.
Since the weight of $P_{m+1}$ is not less than $r_3^{(m+1)s}$, so
$$D_{V_3}\big(a_3,f_3(a_1)\big)=D_{V_3}\big(a_3,f_3(a_2)\big)= r_3^{(m+1)s}+(N_0m+m)\times W_{m,s}.$$

Next we prove the second assertion of the lemma.

Pick a path $P'$ joining vertex $f_3(a_1)$ and $f_3(a_2)$ in $G_{V_3}$.

If one edge of $P'$ belongs to the subgraph $\bigcup_{\ell=2}^{m+1}\Gamma_\ell$,
then $P'$ passes through the vertexes $f_3^2(a_1)$ and $f_3^2(a_2)$.
By the proof of the first assertion, we have that $P'$ has two sub-paths with edge number at least $N_0+1$ and that the weights of these edges are both $W_{m,s}$.
Then the weight of $P'$ is bigger than $(2N_0+2)\times W_{m,s}$.

Assume that $P'$ is contained in the subgraph $\Gamma_1$.
If none of edges of $P'$ belong to the triangles in $f_3(F_I(\triangle))$, then $P'$ has at least $N_0+2$ edges. Otherwise, there exists a sub-path of $P'$ joining two different points in $f_3(I_F)$, and each edge belongs to a triangle in $f_3(F_I(\triangle))$.
By the proof of the first assertion, we have the sub-path passes through at least $N_0$ basic triangles. Thus $P'$ also has at least $N_0+2$ edges.
Since the weight of the edge in $\Gamma_1$ is $W_{m,s}$, we have the weight of $P'$ is not less than $(N_0+2)\times W_{m,s}$, thus the second assertion holds.
\end{proof}

\begin{coro}\label{replaced}
If $F^*=F_m\backslash(V_1\cup V_2\cup V_3)$ is replaced by $F^*=F_m\backslash(V_1\cup V_3)$ in Lemma \ref{ab}, then the conclusion also holds.
\end{coro}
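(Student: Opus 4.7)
The plan is to reduce to Lemma \ref{ab} applied to the smaller IFS, together with the analog of Lemma \ref{V3}(2) for $V_2$. Set $F^\dagger=F_m\setminus(V_1\cup V_2\cup V_3)$, so the new $F^*=F_m\setminus(V_1\cup V_3)=V_2\cup F^\dagger$. The key structural observation is that $G_{V_2}$ meets $G_{F^\dagger}$ in exactly the two ``gateway'' vertices $\{f_2(a_1),f_2(a_3)\}$: the vertex iteration around $a_2$ has moved every basic triangle touching $a_2$ into $V_2$, so $a_2$ itself appears only in $G_{V_2}$, while the only shared boundary points between the two pieces are $f_2(a_1)$ and $f_2(a_3)$.

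Fix $a\in\{f_1(a_2),f_1(a_3)\}$, $b\in\{f_3(a_1),f_3(a_2)\}$ and let $P$ be a geodesic from $a$ to $b$ in $G_{F^*}$; note $a,b\notin V_2$. Split into two cases. If $P$ uses no edge of $V_2$, then $P\subset G_{F^\dagger}$, and Lemma \ref{ab} yields $\tau(P)\geq(N_0-1)W_{m,s}+r_2^{(m+1)s}$ directly. Otherwise, because the vertices of $P$ are pairwise distinct, $P$ can enter and exit $V_2$ at most once, and must do so through the two distinct gateways. Applying Lemma \ref{subpaths} with $\Gamma_1=G_{V_2}$ and $\Gamma_2=G_{F^\dagger}$ (intersecting in $\{f_2(a_1),f_2(a_3)\}$) extracts a sub-path $P'$ of $P$ lying entirely in $G_{V_2}$ with endpoints $\{f_2(a_1),f_2(a_3)\}$. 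The proof of Lemma \ref{V3}(2) transfers verbatim (replacing $V_3,f_3,a_3$ by $V_2,f_2,a_2$) to yield $\tau(P')\geq(N_0+2)W_{m,s}$. Since $W_{m,s}>r_2^{(m+1)s}$ by the choice of $s$ after \eqref{gi}, we have $(N_0+2)W_{m,s}>(N_0-1)W_{m,s}+r_2^{(m+1)s}$, closing this case.

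For the ``in particular'' equality at $(f_1(a_3),f_3(a_1))$, the lower bound has just been established, and the upper bound is realized by the same explicit $N_0$-edge path along $\overline{[f_1(a_3),f_3(a_1)]}$ used in Lemma \ref{ab}, which lies inside $G_{F^\dagger}\subset G_{F^*}$.

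The main obstacle is the structural bookkeeping: verifying cleanly that $G_{V_2}$ and $G_{F^\dagger}$ share exactly the two gateway vertices so that Lemma \ref{subpaths} applies, and ruling out multiple $V_2$-excursions using the distinct-vertex property of paths. Once these are in place, the inequality $(N_0+2)W_{m,s}>(N_0-1)W_{m,s}+r_2^{(m+1)s}$ is automatic, and every $V_2$-detour exceeds the target by a comfortable margin, so no refined edge-counting beyond that already carried out in Lemmas \ref{ab} and \ref{V3} is required.
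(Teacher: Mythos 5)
The paper states this corollary without proof, and your argument supplies the natural missing reasoning. The structural claim is correct for the right reason: by the fractal gasket property (Definition \ref{fractalgasket}(ii)), any triangle $h(\triangle)$ with $h\in F^\dagger$ meeting $f_2(\triangle)$ does so at a vertex of $f_2(\triangle)$, and $a_2$ itself is a corner of $\triangle$ touched by no other cylinder, so the only possible shared vertices are $f_2(a_1)$ and $f_2(a_3)$. The weight comparison is also correct: the $V_2$-analogue of Lemma \ref{V3}(2) transfers verbatim, and the inequality $(N_0+2)W_{m,s}>(N_0-1)W_{m,s}+r_2^{(m+1)s}$ reduces to $W_{m,s}>r_2^{(m+1)s}$, guaranteed by the choice $s>\log C_m/((-m-1)\log r_0)$ after \eqref{gi}. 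One minor imprecision: Lemma \ref{subpaths} as stated assumes the origin and terminus of $P$ lie in opposite subgraphs, whereas in your situation both $a$ and $b$ are in $G_{F^\dagger}$, so the lemma does not literally apply; but the enter-exit observation you actually use — that a path with distinct vertices starting and ending in $G_{F^\dagger}$ can make at most one excursion into $G_{V_2}$, and any such excursion must run from one gateway to the other — is exactly the reasoning underlying Lemma \ref{subpaths}, and you spell it out. The proof is correct.
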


\begin{proof}[\textbf{Proof of Theorem \ref{connectedcomponent}}]\
We prove the second assertion below.
By Lemma \ref{construct}, we can find a fractal gasket IFS $F'=\{f_i\}_{i=1}^N$ with attractor $K'$ satisfying $(\textrm{\rmnum{1}}),(\textrm{\rmnum{2}}),(\textrm{\rmnum{3}}),(\textrm{\rmnum{4}})$.
Suppose that $a_i\in f_i(\triangle)$ for $i=1,2,3$.

Fix a positive integer $m\in\mathbb{N}^*$. Let $F'_m$ be the $m-$level vertex iteration of $F'$, and let $V_1, V_2, V_3$ be the iteration component of $f_1,f_2,f_3$ respectively.
Let $G_0=\{\overline{a_ia_j};1\leq i,j\leq 3\}$ be the complete graph with vertex set $\{a_1,a_2,a_3\}$.
Let $G_1=\bigcup_{g\in F'_m}g(G_0)$ and let $(\tau_0,R)$ are defined in \eqref{123} and \eqref{gi}.

Next, we prove that $(\tau_0,R)$ are good assignment.
Let's prove the compatibility first.

Notice that $\overline{[a_1,a_3]}$ is decomposed by $C_m$ cylinders in $F'_m$.
We ordered these cylinders in order from bottom to top, denote by $g_k(K'),k=1,\ldots,C_m$.
Then $$P_0=g_1(\overline{a_1a_3})+g_2(\overline{a_1a_3})+
\cdots+g_{C_m}(\overline{a_1a_3})$$
is a path joining $a_1,a_3$ in $G_1$.
And we have $$\tau_1(P_0)=r_1^{(m+1)s}+r_2^{(m+1)s}+r_3^{(m+1)s}+
(C_m-3)\times\frac{1-r_1^{(m+1)s}-r_2^{(m+1)s}-r_3^{(m+1)s}}{C_m-3}=1.$$

To prove that $P_0$ is a geodesic in $G_1$,
we need to prove that the weight of any path joining $a_1,a_3$ in $G_1$ is not less than $1$.

Pick a path $P$ join $a_1,a_3$ in $G_1$.
By Lemma \ref{subpaths}, we have the subgraphs $G_{V_1}=\bigcup_{g\in V_1}g(G_0)$, $G_{V_3}=\bigcup_{g\in V_3}g(G_0)$, $G_{F_m\backslash(V_1\cup V_3)}=\bigcup_{g\in F_m\backslash(V_1\cup V_3)}g(G_0)$ decompose the path $P$ into $3$ sub-paths,
denote by $$P=P_1+P_2+P_3.$$
By Lemma \ref{V3} and Corollary \ref{replaced}, we have
$\tau_1(P_1)\geq r_1^{(m+1)s}+(N_0m+m)\times W_{m,s}$,
$\tau_1(P_2)\geq (N_0-1)\times W_{m,s}+r_2^{(m+1)s}$,
$\tau_1(P_3)\geq r_3^{(m+1)s}+(N_0m+m)\times W_{m,s}$,
so $\tau_1(P)\geq 1$.
Then, $P_0$ is a geodesic in $G_1$. Thus
$$D_1(a_1,a_3)=\tau_1(P_0)=1=D_0(a_1,a_3).$$
By the same argument we have $D_1(a_1,a_2)=D_0(a_1,a_2)$ and $D_1(a_2,a_3)=D_0(a_2,a_3)$.
Then $D_1$ coincides with $D_0$ on $\{a_1,a_2,a_3\}$.
The compatibility holds.

Next, we prove that $e$ is a geodesic in $(G_1,\tau_1)$ for any $e\in G_1$.

Pick an edge $e$ in $G_1$. Suppose that $e\in g_{i_0}(G_0)$,
and denote by $a,b$ the endpoints of $e$.
Pick a path $P'$ joining vertex $a$ and $b$ in $G_1$.
To prove that $e$ is a geodesic in $G_1$, we only need to prove that
\begin{equation}\label{We}
\tau_1(P')\geq\tau_1(e).
\end{equation}

If there is an edge in $P'$ belongs to $g_{i_0}(G_0)$, then \eqref{We} obviously holds by \eqref{123}.
So we assume that all edges in $P'$ do not belong to $g_{i_0}(G_0)$.
Since $(\tau_0,R)$ are defined in \eqref{123} and \eqref{gi}, we deduce that $P'$ has an edge with weight $W_{m,s}$.
Since $s>\frac{\log C_m}{(-m-1)\log r_0}$, we have $\tau_1(e)\leq W_{m,s}$ for any $e$, then \eqref{We} holds.

So $e$ is a geodesic in $(G_1,\tau_1)$ for any $e\in G_1$.
Thus $(\tau_0,R)$ are good assignment.
By Theorem \ref{general}, we have $1\leq \dim_C K' \leq \dim_S (F'_m, D)$.
Clearly, $\dim_S (F'_m,D)\rightarrow1$ as $m\rightarrow\infty$.
Thus $\dim_C K\leq\dim_C K'=1$. Since $K$ have a connected component, $\dim_C K=1$.
\end{proof}

\appendix
\section{The construction of the IFS $F_1$ in Lemma \ref{construct} $(\textrm{\rmnum{1}})$}
\begin{proof}
Firstly, we construct an IFS $F'_1$ such that $F\cup F'_1$ is a fractal gasket IFS and
$\partial\triangle\subset \bigcup_{f\in F\cup F'_1}f(\triangle)$.
The construction of the IFS $F'_1$ is as follows:

Pick $e\in\{\overline{[a_1,a_2]},\overline{[a_2,a_3]},\overline{[a_1,a_3]}\}$.
If $e\not\subset \bigcup_{f\in F}f(\triangle)$,
then $e\backslash\bigcup_{f\in F}f(\triangle)$ is a union of line segments.
Let $\overline{[u,v]}$ be a line segment in $e\backslash\bigcup_{f\in F}f(\triangle)$,
clearly we can add a family of small regular triangles $\{g_k(\triangle)\}_{k=1}^q$
such that $F\cup \{g_k\}_{k=1}^q$ is a fractal gasket IFS and
$\bigcup_{k=1}^qg_k(e)=\overline{[u,v]}$.
Repeat this process for each line segment in $e\backslash\bigcup_{f\in F}f(\triangle)$ (see Figure \ref{bian}).
Take $F'_1$ for all the added mappings $\{g_k\}$.

\begin{figure}[h]
\centering
\begin{minipage}{.5\textwidth}
\centering
\begin{tikzpicture}
\draw[xscale=5,yscale=5](0,0)--(1,0)--(0.5,0.866)--cycle;
\draw[xscale=1,yscale=1][fill=red!50](0,0)--(1,0)--(0.5,0.866)--cycle;
\draw[xscale=1,yscale=1][shift={(1.5,2.598)}][fill=red!50](0,0)--(1,0)--(0.5,0.866)--cycle;

\draw[shift={(0.5,0.866)}][xscale=0.5,yscale=0.5][fill=green!50](0,0)--(1,0)--(0.5,0.866)--cycle;
\draw[shift={(0.75,1.299)}][xscale=0.5,yscale=0.5][fill=green!50](0,0)--(1,0)--(0.5,0.866)--cycle;
\draw[shift={(1,1.732)}][xscale=0.5,yscale=0.5][fill=green!50](0,0)--(1,0)--(0.5,0.866)--cycle;   \draw[shift={(1.25,2.165)}][xscale=0.5,yscale=0.5][fill=green!50](0,0)--(1,0)--(0.5,0.866)--cycle;
\draw[shift={(2,3.464)}][xscale=0.5,yscale=0.5][fill=green!50](0,0)--(1,0)--(0.5,0.866)--cycle;
\draw[shift={(2.25,3.897)}][xscale=0.5,yscale=0.5][fill=green!50](0,0)--(1,0)--(0.5,0.866)--cycle;
\draw(1,2.3)node{$e$};
\end{tikzpicture}
\end{minipage}
\caption{The construction of the $F'_1$}
\label{bian}
\end{figure}
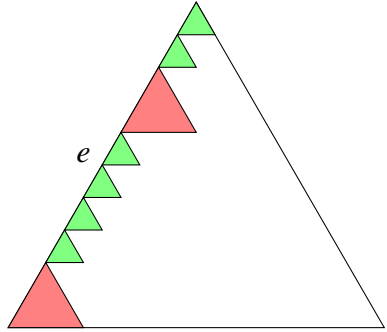

Secondly, we construct an IFS $F''_1$
such that $F\cup F'_1\cup F''_1$ is a fractal gasket IFS and $\bigcup_{f\in F\cup F'_1\cup F''_1}f(\triangle)$ is connected.

Pick two connected components $A, B$ of $\bigcup_{f\in F\cup F'_1}f(\triangle)$.
Let $a, b$ be the vertices of $A, B$ respectively, and these two vertices belong to a single basic triangle in $(F\cup F'_1)(\triangle)$.
Denote the triangles in which $a, b$ is located by $f_i(\triangle), f_j(\triangle)$ respectively.

We construct a broken line $L$ connecting $a,b$ such that it satisfies:
(1) $L$ consists of segments parallel to $\overline{[a_1,a_2]},\overline{[a_1,a_3]}$;
(2) The broken line $L$ do not intersect with the triangles in
$((F\cup F'_1)\backslash\{f_i,f_j\})(\triangle)$.
The construction of the broken line $L$ see Figure \ref{zhexian}.

\begin{figure}[h]
\centering
\begin{minipage}{.5\textwidth}
\centering
    \begin{tikzpicture}
    \draw[xscale=5,yscale=5](0,0)--(1,0)--(0.5,0.866)--cycle;
    \draw[shift ={(2.4,2.3)}][xscale=1,yscale=1][fill=red!50](0,0)--(1,0)--(0.5,0.866)--cycle;
    \draw[shift ={(1.5,0.5)}][xscale=0.5,yscale=0.5][fill=red!50](0,0)--(1,0)--(0.5,0.866)--cycle;
    \draw[shift ={(1.25,0.5)}][xscale=0.25,yscale=0.25][fill=red!50](0,0)--(1,0)--(0.5,0.866)--cycle;
    \draw(1.75,0.65)node{\tiny $f_i$};\draw(2.9,2.6)node{\tiny $f_j$};
    \draw(1.75,0.94)[fill=black]circle (0.03);\draw(1.75,1.1)node{\tiny $a$};
    \draw(2.4,2.3)[fill=black]circle (0.03);\draw(2.4,2.1)node{\tiny $b$};

    \draw(2.4,2.3)..controls (1.2,2) and (3,1.2)..(1.75,0.933);
    \draw[shift ={(1,0)}][help lines](-1,0)--(2.5,6.08);\draw[shift ={(0,0)}][help lines](-1,0)--(6,0);
    \draw[shift ={(1.25,0)}][help lines](-1,0)--(2.5,6.08);\draw[shift ={(0,0.25)}][help lines](-1,0)--(6,0);
    \draw[shift ={(1.5,0)}][help lines](-1,0)--(2.5,6.08);\draw[shift ={(0,0.5)}][help lines](-1,0)--(6,0);
    \draw[shift ={(2,0)}][help lines](-1,0)--(2.5,6.08);\draw[shift ={(0,0.75)}][help lines](-1,0)--(6,0);
    \draw[shift ={(2.5,0)}][help lines](-1,0)--(2.5,6.08);\draw[shift ={(0,0.95)}][help lines](-1,0)--(6,0);
    \draw[shift ={(3,0)}][help lines](-1,0)--(2.5,6.08);\draw[shift ={(0,1.25)}][help lines](-1,0)--(6,0);
    \draw[shift ={(3.5,0)}][help lines](-1,0)--(2.5,6.08);\draw[shift ={(0,1.5)}][help lines](-1,0)--(6,0);
    \draw[shift ={(4,0)}][help lines](-1,0)--(2.5,6.08);\draw[shift ={(0,1.75)}][help lines](-1,0)--(6,0);
    \draw[shift ={(1.75,0)}][help lines](-1,0)--(2.5,6.08);\draw[shift ={(0,2)}][help lines](-1,0)--(6,0);
    \draw[shift ={(2.75,0)}][help lines](-1,0)--(2.5,6.08);\draw[shift ={(0,2.3)}][help lines](-1,0)--(6,0);
    \draw[shift ={(3.25,0)}][help lines](-1,0)--(2.5,6.08);\draw[shift ={(0,2.5)}][help lines](-1,0)--(6,0);
    \draw[shift ={(3.75,0)}][help lines](-1,0)--(2.5,6.08);\draw[shift ={(0,2.75)}][help lines](-1,0)--(6,0);
    \draw[shift ={(2.2,0)}][help lines](-1,0)--(2.5,6.08);\draw[shift ={(0,3)}][help lines](-1,0)--(6,0);
    \draw[shift ={(2.09,0)}][help lines](-1,0)--(2.5,6.08);\draw[shift ={(0,2.15)}][help lines](-1,0)--(6,0);
    \draw[shift ={(2.39,0)}][help lines](-1,0)--(2.5,6.08);

    \draw(0,0)[fill=black]circle (0.03);\draw(5,0)[fill=black]circle (0.03);\draw(2.5,4.35)[fill=black]circle (0.03);
    \draw(-0.3,0)node{$a_1$};\draw(5.3,0)node{$a_2$};\draw(2.5,4.63)node{$a_3$};
    \draw[red](1.75,0.933)--(1.92,0.933);\draw[red](1.92,0.933)--(2.25,1.5);\draw[red](2.25,1.5)--(1.95,1.5);\draw[red](1.95,1.5)--(2.1,1.75);
    \draw[red](2.1,1.75)--(1.75,1.75);\draw[red](1.75,1.75)--(2.07,2.3);\draw[red](2.07,2.3)--(2.4,2.3);
    \end{tikzpicture}
\end{minipage}
\caption{The construction of the broken line $L$}
\label{zhexian}
\end{figure}
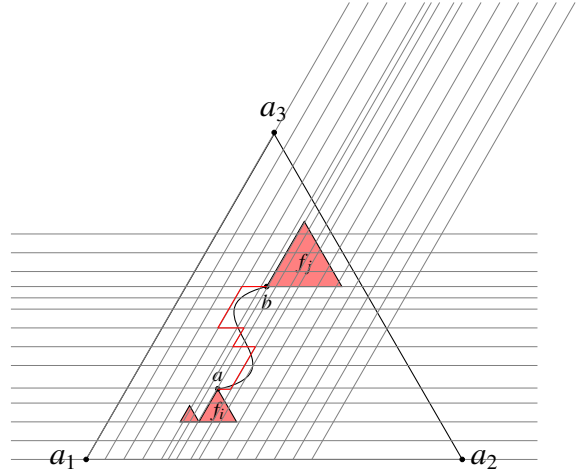

Step 1: We find a curve $\gamma$ in $\triangle$ connecting $a,b$ and $\gamma$ do not intersect the triangles in $((F\cup F'_1)\backslash\{f_i,f_j\})(\triangle)$.
Denote $$d_0=\inf\left\{d(x,y);x\in\gamma, y\in\bigcup_{f\in (F\cup F'_1)\backslash\{f_i,f_j\}}f(\triangle)\right\}.$$
Let $\gamma_{d_0-\epsilon}=\{x\in\mathbb{R}^2;dist(x,\gamma)\leq d_0-\epsilon\}$ be the $(d_0-\epsilon)-$neighborhood of $\gamma$.
Then the set $\gamma_{d_0-\epsilon}$ also do not intersect the triangles in
$((F\cup F'_1)\backslash\{f_i,f_j\})(\triangle)$.

Step 2: Let $P_1, P_2$ be the two families of parallel lines parallel to $\overline{[a_1,a_2]},\overline{[a_1,a_3]}$ respectively,
where $P_1, P_2$ both contain parallel lines passing through $a,b$ and the distance between any two parallel lines is less than $(d_0-\epsilon)/100$.
We call the part enclosed by $P_1, P_2$ a \emph{parallelogram net}.

Suppose that the number of parallel lines in $P_1, P_2$ are sufficiently large.
The parallelogram net $C$ can cover the set $\gamma_{d_0-\epsilon}$.
Since the parallelogram in $C$ has small side lengths and $a,b$ are the lattice points of $C$,
we can choose a joining of parallelograms in $\gamma_{d_0-\epsilon}$ connecting $a,b$,
and the joining do not intersect the $\partial\gamma_{d_0-\epsilon}$.

Step 3: The broken line $L$ can be taken from the boundary of this parallelogram joining (see Figure \ref{zhexian}).

Denote $d_1=\inf\{d(x,y);x\in L, y\in\partial\gamma_{d_0-\epsilon}\}$.
Pick a line segment $L_1$ in $L$.
Suppose $L_1$ is parallel to $e$, $e\in\{\overline{[a_1,a_2]},\overline{[a_1,a_3]}\}$, we can add a family of regular triangles $\{h_k(\triangle)\}_{k=1}^\ell$ with side lengths strictly smaller than $d_1$
such that $F\cup F'_1\cup \{h_k\}_{k=1}^\ell$ is a fractal gasket IFS and
$\bigcup_{k=1}^\ell h_k(e)=L_1$.
Take $F''_1$ for all the added mappings $\{h_k\}$.

Finally, take $F_1=F'_1\cup F''_1$.
\end{proof}

\section{Proof of Lemma \ref{subpaths}}
\begin{proof}
Let $P$ be a path in $\Gamma$ and the origin (terminus) of $P$ is the vertex in $\Gamma_1$ ($\Gamma_2$) that is different from $\{a,b\}$.
Denote by $P(o)$ the sub-path of $P$ from the origin of $P$ to the first pass through the set $\{a,b\}$. Clearly $P(o)\subset \Gamma_1$.
Denote by $P(t)$ the sub-path of $P$ from the last pass through the set $\{a,b\}$ to the terminus of $P$. Clearly $P(t)\subset \Gamma_2$.

Suppose on the contrary that $P\backslash(P(o)\cup P(t))\neq\emptyset$.
We denote the sub-path $P\backslash(P(o)\cup P(t))$ by $P(m)$.
Clearly the two endpoints of $P(m)$ are taken from $\{a,b\}$.
Since $P(m)$ is a path, so the two endpoints are different.
If $P(m)\subset \Gamma_1$ or $P(m)\subset \Gamma_2$, then the lemma holds.
Otherwise, $P(m)$ will pass through the set $\{a,b\}$.
This means that there are two identical vertices in $P(m)$, which contradicts the definition of path.
\end{proof}
\end{document}